\title{The $W(E_6)$-invariant birational geometry of the moduli space of marked cubic surfaces}
\author{Nolan Schock}
\email{nschoc2@uic.edu}
\address{Department of Mathematics, University of Illinois at Chicago, Chicago
IL, 60607, USA}
\begin{document}

\begin{abstract}
    The moduli space $Y = Y(E_6)$ of marked cubic surfaces is one of the most classical moduli spaces in algebraic
    geometry, dating back to the nineteenth century work of Cayley and Salmon. Modern interest in $Y$ was restored in
    the 1980s by Naruki's explicit construction of a $W(E_6)$-equivariant smooth projective compactification $\oY$ of
    $Y$ \cite{narukiCrossRatioVariety1982}, and in the 2000s by Hacking, Keel, and Tevelev's construction of the KSBA
    stable pair compactification $\wY$ of $Y$ as a natural sequence of blowups of $\oY$
    \cite{hackingStablePairTropical2009}. We describe generators for the cones of $W(E_6)$-invariant effective divisors
    and curves of both $\oY$ and $\wY$. For Naruki's compactification $\oY$, we further obtain a complete stable base
    locus decomposition of the $W(E_6)$-invariant effective cone, and as a consequence find several new
    $W(E_6)$-equivariant birational models of $\oY$. Furthermore, we fully describe the log minimal model program for
    the KSBA compactification $\wY$, with respect to the divisor $K_{\wY} + cB + dE$, where $B$ is the boundary and $E$
    is the sum of the divisors parameterizing marked cubic surfaces with Eckardt points.
\end{abstract}
\maketitle

\section{Introduction}

The moduli space $Y = Y(E_6)$ of marked cubic surfaces is one of the most classical moduli spaces in algebraic geometry,
essentially dating back to the original works of Cayley and Salmon studying the 27 lines on a cubic surface
\cite{cayleyTripleTangentPlanes2009}. There is a natural action of the Weyl group $W(E_6)$ on $Y$, permuting the 27
lines, and using ideas dating back to Cayley and Coble \cite{cobleAlgebraicGeometryTheta1968}, in the 1980s Naruki
explicitly constructed a remarkable $W(E_6)$-equivariant smooth projective compactification $\oY = \oY(E_6)$ of $Y$,
with simple normal crossings boundary \cite{narukiCrossRatioVariety1982}. Naruki's compactification and related spaces
have since been intensively studied from a number of different perspectives, see
\cite{allcockComplexHyperbolicGeometry2002, allcockCubicSurfacesBorcherds2000,
    casalaina-martinNonisomorphicSmoothCompactifications2022, colomboChowGroupModuli2004, colomboPezzoModuliRoot2009,
cuetoAnticanonicalTropicalCubic2019, dolgachevComplexBallUniformization2005, freitagGradedAlgebraRelated2002,
gallardoGeometricInterpretationToroidal2021, hackingStablePairTropical2009, renTropicalizationPezzoSurfaces2016,
schockModuliWeightedStable2023, vangeemenLinearSystemNaruki2002, yoshidaE_6EquivariantProjective2000}.

The goal of the present article is to study the $W(E_6)$-invariant birational geometry of $\oY$, together with a related
space $\wY = \wY(E_6)$, a blowup of $\oY$ along intersections of certain boundary divisors. The space $\wY$ is of
significant interest, as it is the KSBA stable pair compactification (cf. \cite{kollarFamiliesVarietiesGeneral2023}) of
the moduli space of pairs $(S,B)$, where $S$ is a smooth cubic surface without Eckardt points, and $B$ is the sum of its
finitely many lines \cite[Theorem 10.31]{hackingStablePairTropical2009}. (More generally, if $c \in (1/9,1]$, so that
$K_S+cB$ is ample, then the KSBA compactification of the moduli space of weighted marked cubic surfaces $(S,cB)$ is
either $\oY$, for $c \in (1/9,1/4]$, or $\wY$, for $c \in (1/4,1]$ \cite[Theorem 1.2]{schockModuliWeightedStable2023}.)

To state our main results, let us first recall Naruki's compactification $\oY$ in more detail. The moduli space $\oY$ is
a 4-dimensional smooth projective variety with 76 irreducible boundary divisors, in bijection with certain root
subsystems of $E_6$---36 $A_1$ divisors $\cong \oM_{0,6}$ (parameterizing marked cubic surfaces with $A_1$
singularities) and 40 $A_2^3=A_2 \times A_2 \times A_2$ divisors $\cong (\bP^1)^3$ (parameterizing marked cubic surfaces
which are the union of three planes, e.g., $xyz=0$). A collection of such boundary divisors intersect if and only if the
corresponding root subsystems are pairwise orthogonal or nested. If $\Theta_1,\ldots,\Theta_k$ are a collection of such
root subsystems, we say the corresponding boundary stratum of $\oY$ is of type $\Theta_1 \cdots \Theta_k$. We write
$B_{\Theta_1 \cdots \Theta_k}$ for the sum of the boundary strata of the given type---thus, for instance, $B_{2A_1A_2^3}$
is the sum of the curves formed by the intersections of 2 $A_1$ divisors and 1 $A_2^3$ divisor.

Our first main result is a complete description of the cones of $W(E_6)$-invariant effective divisors and curves on
$\oY$.

\begin{theorem}[{\cref{thm:eff_Y,thm:mori_Y}}] \label{thm:eff_mori_Y_intro}
    \begin{enumerate}
        \item The $W(E_6)$-invariant effective cone of $\oY(E_6)$ is the closed cone spanned by the sum $B_{A_1}$ of the
            boundary divisors of type $A_1$, and the sum $B_{A_2^3}$ of the boundary divisors of type $A_2^3$.
        \item The $W(E_6)$-invariant cone of curves of $\oY(E_6)$ is the closed cone spanned by the sum $B_{3A_1}$ of
            the curves of type $3A_1$, and the sum $B_{2A_1A_2^3}$ of the curves of type $2A_1A_2^3$. In particular, a
            $W(E_6)$-invariant divisor on $\oY(E_6)$ is nef (resp. ample) if it intersects the curves of type $3A_1$ and
            $2A_1A_2^3$ nonnegatively (resp. positively).
    \end{enumerate}
\end{theorem}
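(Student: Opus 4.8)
The plan is to reduce everything to two-dimensional linear algebra and then pin down four extremal rays by combining the elementary ``lower bound'' inclusions with explicit geometry. The first step is to establish that the $W(E_6)$-invariant N\'eron--Severi space $N^1(\oY)^{W(E_6)}$ has rank two, spanned by $[B_{A_1}]$ and $[B_{A_2^3}]$. Since the boundary divisors fall into exactly the two orbits $A_1$ and $A_2^3$, this amounts to checking that the interior $Y$ contributes no invariant divisor classes, i.e.\ that $N^1(\oY)^{W(E_6)}$ is generated by boundary classes (this should follow from the known structure of $\mathrm{Pic}(\oY)$ together with the triviality of invariant divisor classes on the open stratum $Y$). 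Dually, the intersection pairing identifies $N_1(\oY)^{W(E_6)}$ as a rank-two space, in which I would show the curve classes $[B_{3A_1}]$ and $[B_{2A_1A_2^3}]$ form a basis. The central computational input is then the $2\times 2$ matrix of intersection numbers pairing $\{B_{A_1}, B_{A_2^3}\}$ against $\{B_{3A_1}, B_{2A_1A_2^3}\}$; these are computed from Naruki's explicit boundary combinatorics, intersection theory on the boundary divisors (which are $\oM_{0,6}$ for type $A_1$ and $(\bP^1)^3$ for type $A_2^3$), and the normal-bundle data recorded in Naruki's construction.

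With this frame in place, the ``$\supseteq$'' inclusions in both parts are immediate: $B_{A_1}$ and $B_{A_2^3}$ are honestly effective divisors, and $B_{3A_1}$, $B_{2A_1A_2^3}$ are classes of honest curves, so each spanned cone is contained in the corresponding effective, resp.\ Mori, cone. The content is the reverse inclusions. For the cone of curves (part (2)), I would produce two $W(E_6)$-invariant nef divisor classes spanning a two-dimensional subcone of the nef cone, obtained as pullbacks of ample classes under explicit $W(E_6)$-equivariant contractions of $\oY$ (for instance a contraction to a GIT/Baily--Borel-type model collapsing the $A_2^3$ divisors, and a second equivariant morphism contracting the curves of type $3A_1$). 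Using the intersection matrix I would check that one of these nef classes is zero on $B_{3A_1}$ and positive on $B_{2A_1A_2^3}$, and the other vice versa. This exhibits $B_{3A_1}$ and $B_{2A_1A_2^3}$ as the two extremal rays of $\overline{\mathrm{NE}}(\oY)^{W(E_6)}$ and simultaneously identifies the nef cone as the dual cone, which is precisely the ``in particular'' statement.

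For the effective cone (part (1)) the extremal rays are governed by movable curves rather than by the Mori cone, so a separate argument is needed. Here I would argue that any $W(E_6)$-invariant effective divisor is numerically equivalent to a nonnegative combination of boundary divisors: after removing its boundary part, the remaining effective divisor restricts to an invariant effective divisor on $Y$, which is trivial because $Y$ carries no nonconstant invariant divisor classes; invariance then forces equal, and nonnegative, multiplicities along the two orbits, placing the class in $\langle B_{A_1}, B_{A_2^3}\rangle$. Equivalently, I would exhibit two invariant covering (movable) curve classes $\gamma_1, \gamma_2$ with $\gamma_1 \cdot B_{A_2^3} = 0 < \gamma_1 \cdot B_{A_1}$ and $\gamma_2 \cdot B_{A_1} = 0 < \gamma_2 \cdot B_{A_2^3}$, so that nonnegative pairing against $\gamma_1$ and $\gamma_2$ cuts out exactly $\langle B_{A_1}, B_{A_2^3}\rangle$.

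The main obstacle I anticipate is geometric rather than formal, and it is twofold. First, establishing that the invariant N\'eron--Severi space is exactly rank two (no hidden invariant classes from the interior) and computing the four intersection numbers correctly requires careful bookkeeping of Naruki's stratification and the self-intersection and normal-bundle data---this is where all the genuine geometry of $\oY$ enters. Second, the reverse inclusions hinge on producing the right ``witnesses'': explicit equivariant contractions of $\oY$ whose exceptional loci can be identified with the relevant boundary curves (for the Mori and nef cones), and explicit covering curves disjoint from one boundary orbit (for the effective cone). Verifying that these contractions exist and that the contracted curves are exactly $B_{3A_1}$ and $B_{2A_1A_2^3}$, rather than some larger family, is the crux; once the intersection matrix and the witnesses are in hand, the two-dimensional duality makes both conclusions automatic.
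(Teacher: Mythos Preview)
Your framework---reduce to the rank-two invariant N\'eron--Severi space and pin down the four extremal rays---is correct, and the ``easy'' inclusions are fine. But your proposed arguments for the reverse inclusions diverge from the paper's and have real gaps.

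For the Mori cone, the paper does \emph{not} construct nef witnesses via contractions. Instead it first proves part (1), then invokes \cite[Corollary~2.3]{keelContractibleExtremalRays1996}: once the invariant effective cone of divisors is known to be generated by boundary, the invariant cone of curves is automatically generated by curves lying in the boundary. One then restricts to an irreducible boundary divisor ($\oM_{0,6}$ or $(\bP^1)^3$) and cites the known symmetric Mori cones there, which are generated exactly by the $3A_1$ and $2A_1A_2^3$ curves. Your plan to pull back ample classes from contractions is circular as stated: the contraction collapsing $A_2^3$ divisors to points is classical (Naruki), so that nef class is available, but the ``second equivariant morphism contracting the curves of type $3A_1$'' is the map $\oY\to\oW$ that the paper constructs only \emph{after} knowing the nef cone, via the basepoint-free theorem. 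You would need an independent construction of that second nef class.

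For the effective cone, the paper again argues by restriction: given an invariant effective divisor $c_{A_1}B_{A_1}+c_{A_2^3}B_{A_2^3}$, subtract fixed boundary components so that no irreducible boundary divisor is contained in its support, then restrict to $D_{A_1}\cong\oM_{0,6}$. The restriction is $\frac{1}{5}\bigl(4c_{A_1}B_2+(5c_{A_2^3}-3c_{A_1})B_3\bigr)$, which must be effective, and the known $S_6$-invariant effective cone of $\oM_{0,6}$ forces $c_{A_1},c_{A_2^3}\geq 0$. Your approach (a) does not work: the claim that ``$Y$ carries no nonconstant invariant divisor classes'' is false as stated---the Eckardt divisors are invariant, effective, and supported in the interior---and even granting that the invariant Picard group is generated by boundary classes, this gives no information about the \emph{signs} of the coefficients for an arbitrary invariant effective class. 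Your approach (b) via covering curves is a valid strategy, but you do not produce the curves; finding an invariant moving curve class with $\gamma\cdot B_{A_2^3}=0$ (i.e.\ avoiding all forty $A_2^3$ divisors) is not obvious and would itself require nontrivial geometry.

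In short, the key idea you are missing is the restriction-to-boundary technique combined with Keel's lemma, which lets both parts be read off from the known invariant cones on $\oM_{0,6}$ and $(\bP^1)^3$ without ever constructing contractions or moving curves on $\oY$ directly.
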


As a corollary, we also obtain a complete description of the $W(E_6)$-invariant nef cone of $\oY$.

\begin{corollary}[{\cref{cor:nef_Y}}] \label{thm:nef_Y_intro}
    The $W(E_6)$-invariant nef cone of $\oY$ is spanned by $B_{A_1}+3B_{A_2^3}$ and $B_{A_1}+B_{A_2^3}$.
\end{corollary}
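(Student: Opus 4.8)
The plan is to obtain the nef cone as the dual of the Mori cone. By \cref{thm:eff_mori_Y_intro}(2) the $W(E_6)$-invariant cone of curves is the closed cone spanned by the two classes $B_{3A_1}$ and $B_{2A_1A_2^3}$, and a $W(E_6)$-invariant divisor is nef precisely when it pairs nonnegatively with both. By \cref{thm:eff_mori_Y_intro}(1) the $W(E_6)$-invariant effective cone is spanned by $B_{A_1}$ and $B_{A_2^3}$; since this cone is full-dimensional, the two generators are linearly independent and the $W(E_6)$-invariant Néron--Severi space is two-dimensional with basis $B_{A_1}, B_{A_2^3}$. I would therefore write an arbitrary invariant class as $D = a\,B_{A_1} + b\,B_{A_2^3}$ and realize the nef cone as the region of the $(a,b)$-plane cut out by the two inequalities $D \cdot B_{3A_1} \ge 0$ and $D \cdot B_{2A_1A_2^3} \ge 0$.

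The computational heart is then the $2 \times 2$ intersection matrix with entries $B_{A_1}\cdot B_{3A_1}$, $B_{A_2^3}\cdot B_{3A_1}$, $B_{A_1}\cdot B_{2A_1A_2^3}$, and $B_{A_2^3}\cdot B_{2A_1A_2^3}$. Each entry is a sum, over the relevant boundary curves, of the degree of the restriction of the corresponding boundary divisor to that curve, so it can be extracted from the explicit geometry of the strata: the $A_1$ divisors are isomorphic to $\oM_{0,6}$ and the $A_2^3$ divisors to $(\bP^1)^3$, while the orthogonal-or-nested incidence rule determines which curves of type $3A_1$ and $2A_1A_2^3$ lie on which boundary divisors and with what normal data. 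I would compute the four numbers by restricting $B_{A_1}$ and $B_{A_2^3}$ to representative curves of each type, combining the combinatorial count of meeting strata with the known self-intersection/normal-bundle contributions.

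Finally, the two extremal rays of the nef cone are the solutions of $D\cdot B_{3A_1}=0$ and of $D\cdot B_{2A_1A_2^3}=0$, each lying on the nef side of the other inequality. Solving these two linear equations in $(a,b)$ should produce the rays through $B_{A_1}+3B_{A_2^3}$ and $B_{A_1}+B_{A_2^3}$; I would conclude by verifying that each of these classes pairs nonnegatively with both extremal curves, confirming that they are nef and bound the cone. As a sanity check, both candidate rays have positive coordinates in the basis $B_{A_1}, B_{A_2^3}$, consistent with the nef cone being contained in the closure of the effective cone.

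The only place an error can enter is the evaluation of the four boundary intersection numbers, so this is where I expect the real work: it demands careful bookkeeping of the stratification combinatorics (how many $A_1$ and $A_2^3$ divisors pass through a given curve of each type) together with the normal bundle data, rather than any conceptual difficulty, since the duality between the nef and Mori cones reduces the whole statement to this finite linear-algebra computation.
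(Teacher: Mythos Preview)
Your approach is correct and is exactly what the paper does: it invokes duality of the nef cone and the cone of curves, then reads off the two extremal rays from the $2\times 2$ intersection matrix recorded in \cref{prop:int_nums_Y} (namely $B_{A_1}\cdot C_{3A_1}=-2$, $B_{A_2^3}\cdot C_{3A_1}=2$, $B_{A_1}\cdot C_{2A_1A_2^3}=3$, $B_{A_2^3}\cdot C_{2A_1A_2^3}=-1$). The only difference is that the paper has already tabulated these four numbers, so the proof of \cref{cor:nef_Y} is a one-line citation rather than the bookkeeping you anticipate.
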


Recall the stable base locus of a $\bQ$-divisor $D$ is the set $\bB(D) = \bigcap_{m \geq 0} \text{Bs}(mD)$, where $m$ is
taken to be sufficiently divisible so that $mD$ is a $\bZ$-divisor, and $\text{Bs}(mD)$ is the set-theoretic base locus
of $mD$. The variety $\oY$ is rational, and linear, numerical, and homological equivalence coincide on $\oY$ (see
\cite{colomboChowGroupModuli2004} and \cite[Theorem 1.9]{schockQuasilinearTropicalCompactifications2021}). Therefore,
there is a well-behaved decomposition of the pseudoeffective cone of $\oY$ into chambers determined by the stable base
loci of the corresponding divisors. We completely describe this stable base locus decomposition for the
$W(E_6)$-invariant effective cone of $\oY$.

For two $\bQ$-divisors $D_1,D_2$, we write $(D_1,D_2]$ for the set of divisors of the form $\alpha D_1 + \beta D_2$ with
$\alpha \geq 0$, $\beta > 0$, and similarly for $[D_1,D_2)$, $(D_1,D_2)$, $[D_1,D_2]$. A $\bQ$-divisor $D$ is in
$(D_1,D_2]$ if and only if it lies in the cone spanned by the rays through $D_1$ and through $D_2$, but does not lie on
the ray through $D_1$.

\begin{theorem}[{\cref{thm:sbl_Y}}] \label{thm:sbl_Y_intro}
    Let $D$ be a $W(E_6)$-invariant effective $\bQ$-divisor on $\oY$.
    \begin{enumerate}
        \item If $D \in [B_{A_1}+3B_{A_2^3},B_{A_1}+B_{A_2^3}]$ then $D$ is nef and semi-ample. In particular, $\bB(D) =
            \emptyset$.
        \item If $D \in (B_{A_1}+3B_{A_2^3},B_{A_2^3}]$, then $\bB(D) = B_{A_2^3}$ is the sum of the $A_2^3$ divisors.
        \item If $D \in (B_{A_1}+B_{A_2^3},5B_{A_1}+3B_{A_2^3}]$, then $\bB(D)=B_{2A_1}$ is the sum of the $2A_1$ surfaces.
        \item If $D \in (5B_{A_1}+3B_{A_2^3},B_{A_1}]$, then $\bB(D)=B_{A_1}$ is the sum of the $A_1$ divisors.
    \end{enumerate}
\end{theorem}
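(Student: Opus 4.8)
The plan is to coordinatize the two-dimensional space of $W(E_6)$-invariant divisor classes using the effective generators $B_{A_1}$ and $B_{A_2^3}$, and then to establish, chamber by chamber, matching upper and lower bounds for $\bB(D)$. Throughout I will use two standard mechanisms. For upper bounds, the containment $\bB(D_1 + D_2) \subseteq \bB(D_1) \cup \bB(D_2)$ together with the facts that a semi-ample divisor has empty stable base locus and that an effective divisor has stable base locus inside its support. For lower bounds, the observation that if $Z \subseteq \oY$ is irreducible with $Z \not\subseteq \bB(D)$, then $D|_Z$ is pseudoeffective; so exhibiting a stratum $Z$ on which $D|_Z$ fails to be pseudoeffective forces $Z \subseteq \bB(D)$, and by $W(E_6)$-invariance the entire $W(E_6)$-orbit of $Z$ lies in $\bB(D)$. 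Since the effective and nef cones are already determined (\cref{thm:eff_mori_Y_intro,thm:nef_Y_intro}), the real work is to upgrade nef-ness to semi-ampleness and to analyze the two chambers outside the nef cone.

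The foundation is part (1). Since the semi-ample classes form a convex subcone, it suffices to prove that the two extremal nef rays $B_{A_1}+3B_{A_2^3}$ and $B_{A_1}+B_{A_2^3}$ are semi-ample; every class in the (two-dimensional) nef cone is then a nonnegative combination of semi-ample classes and hence semi-ample, giving $\bB(D) = \emptyset$. I would prove semi-ampleness of the two edges by exhibiting the morphisms they define: each extremal nef ray is dual to one of the two extremal curve classes $B_{3A_1}$, $B_{2A_1A_2^3}$ of \cref{thm:eff_mori_Y_intro}, and I would identify the corresponding contraction of $\oY$ explicitly (e.g., realizing one edge as the pullback of an ample class under the linear system studied by van Geemen and Yoshida \cite{vangeemenLinearSystemNaruki2002,yoshidaE_6EquivariantProjective2000}, and the other as a contraction of the $A_2^3$ boundary loci). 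With part (1) in hand, parts (2) and (4) follow quickly. For part (2), writing $D = \alpha(B_{A_1}+3B_{A_2^3}) + \beta B_{A_2^3}$ with $\beta > 0$ bounds $\bB(D) \subseteq B_{A_2^3}$; for part (4), writing $D = \alpha(B_{A_1}+B_{A_2^3}) + \beta B_{A_1}$ bounds $\bB(D) \subseteq B_{A_1}$. The reverse containments come from restriction: I would compute $D|_V$ on an $A_2^3$ divisor $V \cong (\bP^1)^3$ (resp. on an $A_1$ divisor $\cong \oM_{0,6}$) using the known intersection combinatorics of the boundary, and check that for $D$ in the stated range the restriction leaves the pseudoeffective cone of $V$, forcing $V \subseteq \bB(D)$.

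The real content, and the main obstacle, is part (3), where the asserted base locus $B_{2A_1}$ has codimension two: the crude decomposition $D = \alpha(B_{A_1}+B_{A_2^3}) + \beta B_{A_1}$ only yields $\bB(D) \subseteq B_{A_1}$, and the difficulty is to show that the divisorial part $B_{A_1}$ is \emph{not} a fixed component, so that $\bB(D)$ drops to $B_{2A_1}$. The crux is the single estimate $\bB(5B_{A_1}+3B_{A_2^3}) \subseteq B_{2A_1}$; granting it, writing $D = \alpha(B_{A_1}+B_{A_2^3}) + \beta(5B_{A_1}+3B_{A_2^3})$ gives the upper bound for all of part (3). To prove this estimate I would produce an effective divisor on the extremal ray $5B_{A_1}+3B_{A_2^3}$ whose support does not contain the $A_1$ divisors---the natural candidate being a $W(E_6)$-invariant divisor built from the Eckardt divisor $E$---and use its $W(E_6)$-translates to move sections off $B_{A_1}$ away from $B_{2A_1}$; equivalently, I would identify the nef wall $B_{A_1}+B_{A_2^3}$ as a small contraction and realize the classes of part (3) as proper transforms of nef classes under the resulting flip, whose base locus is exactly the flipped codimension-two locus $B_{2A_1}$. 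Pinning down that the transition occurs precisely at the ratio $5{:}3$ is exactly the computation of the class of this extremal effective/movable divisor. The matching lower bound $\bB(D) \supseteq B_{2A_1}$ is again by restriction, now to a $2A_1$ surface, showing $D|_{2A_1}$ is not pseudoeffective. I expect the hardest part of the entire argument to be controlling this flip, or equivalently producing the explicit movable divisor, since it is precisely what distinguishes the codimension-two behavior of part (3) from the divisorial behavior of part (4).
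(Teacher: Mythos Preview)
Your overall architecture is right, and your treatments of parts (2) and (4) match the paper's (the paper uses covering curves with negative intersection rather than restriction-not-pseudoeffective, but these are interchangeable). There are, however, two genuine issues.

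For part (1), exhibiting the morphisms explicitly works for the edge $B_{A_1}+3B_{A_2^3}$: this is the pullback of $\cO(1)$ under Naruki's classical contraction $\oY\to\oM$ (Colombo--van Geemen). But the other edge $B_{A_1}+B_{A_2^3}$ contracts the $2A_1$ surfaces to a variety $\oW$ that is \emph{not} previously known; the paper does not construct $\oW$ independently and then pull back an ample class. Instead it proves semi-ampleness abstractly via the Basepoint-Free Theorem: write $\frac{1}{4}(B_{A_1}+B_{A_2^3})=K_{\oY}+\frac{1}{2}B_{A_1}$ with $(\oY,\frac{1}{2}B_{A_1})$ klt, note the class is nef and lies in the interior of the effective cone hence is big, and apply BPF. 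You should expect to need this, since there is no off-the-shelf target for the second contraction.

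For part (3), your two proposed mechanisms for the upper bound $\bB(5B_{A_1}+3B_{A_2^3})\subseteq B_{2A_1}$ do not go through. The Eckardt divisor has class $E=\frac{25B_{A_1}+27B_{A_2^3}}{4}$, which lies in the \emph{nef} cone (ratio $25{:}27$ sits between $1{:}1$ and $1{:}3$), not on or beyond the $5{:}3$ ray; any effective $W(E_6)$-invariant combination of $E$ with boundary that lands on the $5{:}3$ ray necessarily involves a positive multiple of $B_{A_1}$, so you cannot avoid $A_1$ support this way. The flip approach is circular here: the paper only proves existence of the flip via general MMP (\cite{cortiFlips3folds4folds2007}) \emph{after} establishing the stable base locus, and explicitly remarks that it was unable to construct the flip directly. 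The paper's actual argument is cohomological: set $\Delta=\frac{5B_{A_1}+3B_{A_2^3}}{4}$, compute $\Delta\vert_{B_{A_1}}=B_{2A_1}$, and show the restriction map $H^0(\oY,\Delta)\to H^0(B_{A_1},\Delta\vert_{B_{A_1}})$ is surjective by proving $H^1(\oY,\Delta-B_{A_1})=0$. The key identity is
\[
\Delta-B_{A_1}=\frac{B_{A_1}+3B_{A_2^3}}{4}=K_{\oY}+\tfrac{1}{2}B,
\]
and $\frac{1}{2}B$ is an integral nef and big divisor with SNC support, so Kawamata--Viehweg vanishing applies. This is the missing idea; once you have it, the lower bound via a moving curve on a $2A_1$ surface (equivalently, your restriction argument) finishes part (3).
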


\cref{thm:sbl_Y_intro} is pictured in \cref{fig:sbl}.

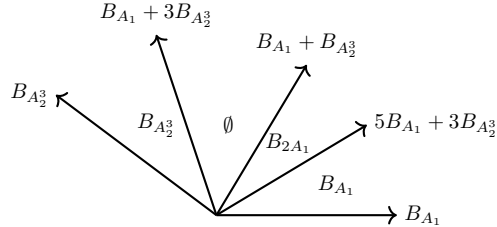
\begin{figure}[htpb]
    \centering
     \begin{tikzpicture}[scale=.8, transform shape]
       \draw[thick,->] (0,0) -- (3,0);
       \node[right] at (3,0) {$B_{A_1}$};
       \draw[thick,->] (0,0) -- (-4/1.5,3/1.5);
       \node[left] at (-4/1.5,3/1.5) {$B_{A_2^3}$};
       \draw[thick,->] (0,0) -- (-1,3);
       \node[above] at (-1,3) {$B_{A_1}+3B_{A_2^3}$};
       \draw[thick,->] (0,0) -- (1.5,2.5);
       \node[above] at (1.5,2.5) {$B_{A_1}+B_{A_2^3}$};
       \draw[thick,->] (0,0) -- (5/2,3/2);
       \node[right] at (5/2,3/2) {$5B_{A_1}+3B_{A_2^3}$};
       \node[] at (1.2,1.2) {$B_{2A_1}$};
       \node[] at (2,.5) {$B_{A_1}$};
       \node[] at (-1,1.5) {$B_{A_2^3}$};
       \node[] at (0.2,1.5) {$\emptyset$};
     \end{tikzpicture}
    \caption{The stable base locus decomposition of the $W(E_6)$-invariant effective cone of $\oY$.}%
    \label{fig:sbl}
\end{figure}

For an effective $\bQ$-divisor $D$ on $\oY$, let $\oY(D) = \Proj \bigoplus_{m \geq 0} H^0(\oY,mD)$ be the associated
projective model (if it exists), where the sum is taken over $m$ sufficiently divisible so that $mD$ is a $\bZ$-divisor.
We completely describe the $W(E_6)$-invariant birational models of $\oY$ appearing in the stable base locus
decomposition of the $W(E_6)$-invariant effective cone of $\oY$.

\begin{theorem}[{\cref{thm:models}}] \label{thm:models_intro}
    Let $D$ be a $W(E_6)$-invariant effective $\bQ$-divisor on $\oY$.
    \begin{enumerate}
        \item If $D \in (B_{A_1}+3B_{A_2^3},B_{A_1}+B_{A_2^3})$, then $D$ is ample, and $\oY(D) = \oY$.
        \item If $D \in [B_{A_1}+3B_{A_2^3},B_{A_2^3})$, then $\oY(D)$ is the GIT moduli space $\oM$ of marked cubic
            surfaces. The morphism $\pi : \oY \to \oM$ is given by contracting the 40 $A_2^3$ divisors of $\oY$ to
            singular points, each locally isomorphic to the cone over the Segre embedding of $(\bP^1)^3$.
        \item If $D$ is a multiple of $B_{A_1}+B_{A_2^3}$, then $D$ is semi-ample and the morphism $\phi : \oY \to
            \oY(D) =: \oW$ associated to $\lvert mD \rvert$ for $m \gg 0$ is given by contracting each $2A_1$ surface $S
            \cong Bl_4\bP^2$ to a singular line via the strict transform of the linear system of conics on $\bP^2$
            passing through the 4 blown up points.
        \item If $D \in (B_{A_1}+B_{A_2^3},B_{A_1})$, then $\oY(D)$ is the $D$-flip $\oX$ of the small
            contraction $\phi : \oY \to \oW$ of part (3).
        \item If $D$ is a multiple of $B_{A_1}$ or $B_{A_2^3}$, then $\oY(D)$ is a point.
    \end{enumerate}
\end{theorem}

In \cref{thm:models_intro}, the GIT moduli space $\oM$ of marked cubic surfaces is classically known; it is the natural
$W(E_6)$-cover of the GIT moduli space of cubic surfaces induced by choosing a marking, see for instance \cite[Section
2]{dolgachevComplexBallUniformization2005}. The contraction $\pi : \oY \to \oM$ is explicitly constructed in Naruki's
original work \cite{narukiCrossRatioVariety1982}. The space $\oM$ also has an interpretation as the K-moduli space of
weighted marked cubic surfaces $(S,cB)$ with weight $0 < c < 1/9$, cf. \cite{zhaoCompactificationsModuliPezzo2023,
schockModuliWeightedStable2023}, and as the Baily-Borel compactification of Allcock, Carlson, and Toledo's ball
quotient model of the moduli space of marked cubic surfaces \cite{allcockComplexHyperbolicGeometry2002,
gallardoGeometricInterpretationToroidal2021}.  Similarly, the space $\oY$ has an intrepretation as the moduli space
of KSBA weighted stable marked cubic surface $(S,cB)$, with weight $1/9 < c \leq 1/4$, as mentioned previously, see
\cite{gallardoGeometricInterpretationToroidal2021, schockModuliWeightedStable2023}, and as the toroidal
compactification of Allcock, Carlson, and Toledo's ball quotient model of the moduli space of marked cubic surfaces
\cite{allcockComplexHyperbolicGeometry2002, gallardoGeometricInterpretationToroidal2021}. The other birational
models in \cref{thm:models_intro} appear to be new.

Our second moduli space of interest is the moduli space $\wY = \wY(E_6)$ of KSBA stable marked cubic surfaces
\cite{hackingStablePairTropical2009}. The space $\wY$ is the blowup of $\oY$ along all intersections of $A_1$ divisors,
in increasing order of dimension. The boundary of $\wY$ consists of 5 types of irreducible divisors, which we label by
$a,b,a_2,a_3,a_4$. These are, respectively, the strict transforms of the $A_1$ and $A_2^3$ divisors, and the (strict
transforms of the) exceptional divisors over the intersections of $2,3,4$ $A_1$ divisors. The moduli space $\wY$ also
has 45 Eckardt divisors, parameterizing marked cubic surfaces with Eckardt points. We label the Eckardt divisors by type
$e$, and we choose not to consider them as part of the boundary. We follow similar notation for strata of $\wY$ as for
$\oY$.

\begin{theorem}[{\cref{thm:eff_cone,thm:mori_cone}}] \label{thm:eff_cones_intro}
    \begin{enumerate}
        \item The $W(E_6)$-invariant effective cone of $\wY(E_6)$ is the closed cone spanned by the $W(E_6)$-invariant
            boundary divisors $B_a$, $B_{a_2}$, $B_{a_3}$, $B_{a_4}$, and $B_b$.
        \item The $W(E_6)$-invariant cone of curves of $\wY(E_6)$ is the closed cone spanned by the $W(E_6)$-invariant
            curves of types
            \[
                aa_2a_3, aa_2a_4, aa_3a_4, a_2a_3a_4, aa_2b, aa_3b, a_2a_3b, aa_2e.
            \]
            (These are the boundary curves of $\wY(E_6)$, together with the curves of type $aa_2e$ formed by the
            intersection of one type $a$ divisor, one type $a_2$ divisor, and one Eckardt divisor.) In particular, a
            $W(E_6)$-invariant divisor on $\wY(E_6)$ is nef (resp. ample) if it intersects the curves of the above types
            nonnegatively (resp. positively). Explicitly, the $W(E_6)$-invariant nef cone of $\wY(E_6)$ is the closed
            cone spanned by the rays through the following divisor classes.
            \begin{align*}
                5B_{a} + 6B_{a_2} + 7B_{a_3} + 8B_{a_4} + 3B_b, \\
                3B_{a} + 6B_{a_2} + 7B_{a_3} + 8B_{a_4} + 3B_b, \\
                B_{a} + 2B_{a_2} + 3B_{a_3} + 2B_{a_4} + 3B_b, \\
                B_{a} + 2B_{a_2} + 3B_{a_3} + 4B_{a_4} + 3B_b, \\
                B_{a} + 2B_{a_2} + 3B_{a_3} + 4B_{a_4} + B_b, \\
                B_{a} + 2B_{a_2} + 3B_{a_3} + 2B_{a_4} + 2B_b.
            \end{align*}
    \end{enumerate}
\end{theorem}

The $W(E_6)$-invariant effective cone of $\wY$ is 5-dimensional and its stable base locus decomposition appears to be
quite complicated in comparison to that of $\oY$. We describe an interesting slice of this decomposition, by describing
the log minimal models of the pair $(\wY,cB+dE)$, with respect to the divisor $K_{\wY} + cB + dE$, where $B$ is the sum
of the boundary divisors and $E$ is the sum of the Eckardt divisors.

\begin{theorem}[{\cref{thm:log_mmp}}] \label{thm:log_mmp_intro}
    Fix $0 \leq c \leq 1$ and $0 \leq d \leq 2/3$. Then the pair $(\wY,cB+dE)$ has log canonical singularities. The log
    canonical models of $(\wY,cB+dE)$ are as follows.
    \begin{enumerate}
        \item If $4c + 25d < 1$, then $K_{\wY} + cB + dE$ is not effective.
        \item If $4c+25d = 1$, then the log canonical model of $(\wY,cB+dE)$ is a point.
        \item If $2c+12d \leq 1$ and $4c+25d > 1$ then the log canonical model of $(\wY,cB+dE)$ is the GIT moduli space
            $\oM$ of marked cubic surfaces.
        \item If $c+4d \leq 1$ and $2c+12d > 1$, then the log canonical model of $(\wY,cB+dE)$ is Naruki's
            compactification $\oY$.
        \item If $c+3d \leq 1$ and $c+4d > 1$, then the log canonical model of $(\wY,cB+dE)$ is the blowup $\oY_1$ of
            $\oY$ along the intersections of 4 $A_1$ divisors.
        \item If $c+2d \leq 1$ and $c+3d > 1$, then the log canonical model of $(\wY,cB+dE)$ is the blowup $\oY_2$ of
            $\oY_1$ along the strict transforms of the intersections of 3 $A_1$ divisors.
        \item If $c+2d > 1$, then $K_{\wY}+cB+dE$ is ample, so $(\wY,cB+dE)$ is already its own log canonical model.
            Recall this is the blowup of $\oY_2$ along the strict transforms of the intersections of 2 $A_1$ divisors.
    \end{enumerate}
\end{theorem}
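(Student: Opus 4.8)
The plan is to run the minimal model program directly along the explicit tower of blowups $\wY \to \oY_2 \to \oY_1 \to \oY \to \oM$, exploiting that every variety appearing in the statement is an explicit known model and that the extremal rays of the $W(E_6)$-invariant cone of curves are already pinned down by \cref{thm:eff_cones_intro}. The first task is bookkeeping: to express $K_{\wY} + cB + dE$ in the basis of boundary (and Eckardt) divisor classes. Since $\rho : \wY \to \oY$ is a composition of smooth blowups --- of the $4A_1$ points, then the strict transforms of the $3A_1$ curves, then the strict transforms of the $2A_1$ surfaces --- I would compute $K_{\wY} = \rho^* K_{\oY} + 3B_{a_4} + 2B_{a_3} + B_{a_2}$ (up to the corrections coming from the fact that each lower-dimensional center already meets the exceptional divisors of the previous blowups), and similarly $\rho^* B_{A_1} = B_a + 2B_{a_2} + 3B_{a_3} + 4B_{a_4}$ and $\rho^* B_{A_2^3} = B_b$, the multiplicities being the number of $A_1$ divisors through each center. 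Combined with the known class of $K_{\oY}$ in terms of $B_{A_1}, B_{A_2^3}$ and an expression for the Eckardt sum $E$, this yields $K_{\wY} + cB + dE$ as an explicit $\bQ$-combination of $B_a, B_{a_2}, B_{a_3}, B_{a_4}, B_b$, and $E$.

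Next I would compute the intersection number $(K_{\wY} + cB + dE) \cdot C$ for each of the eight extremal curve classes $aa_2a_3, aa_2a_4, aa_3a_4, a_2a_3a_4, aa_2b, aa_3b, a_2a_3b, aa_2e$ of \cref{thm:eff_cones_intro}. Each such number is an affine-linear function of $(c,d)$, and the walls of the statement should emerge exactly as the loci where these functions vanish: the fibers of $\wY \to \oY_2$, $\oY_2 \to \oY_1$, $\oY_1 \to \oY$, and $\oY \to \oM$ are each spanned by specific curve classes from this list, so that the conditions $c+2d=1$, $c+3d=1$, $c+4d=1$, and $2c+12d=1$ arise as the vanishing of the corresponding intersection numbers. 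In each open chamber the computation should show that $K_{\wY}+cB+dE$ is nef and, being the pullback of an ample class from the relevant explicit model plus an effective exceptional remainder, is semi-ample with section ring whose $\Proj$ is precisely that model; this is how I would identify the log canonical models in cases (3)--(7). For the final wall I would use that $\oM$ has $W(E_6)$-invariant Picard rank one, so that the pushforward of $K_{\wY}+cB+dE$ to $\oM$ is a fixed ample generator scaled by an affine-linear function of $(c,d)$ vanishing on $4c+25d=1$; this simultaneously yields the point in case (2), the non-effectivity in case (1) (the pushforward is anti-ample, so no multiple has sections), and the amplitude on $\oM$ in case (3).

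For the log canonical singularity claim I would argue locally. The variety $\wY$ is smooth and the boundary $B$ is simple normal crossings, so the only issue is the interaction of the Eckardt divisors $E$ with $B$ and with one another. Over the finitely many $W(E_6)$-orbits of boundary strata --- in particular along the curves of type $aa_2e$, which already signal where several Eckardt divisors concentrate --- I would exhibit local analytic equations, available from the toroidal/tropical description of $\wY$ of Hacking--Keel--Tevelev, and compute log discrepancies, checking that the coefficient bounds $0 \le c \le 1$ and $0 \le d \le 2/3$ are exactly what is needed to keep all discrepancies $\ge -1$. The value $2/3$ is the log canonical threshold imposed by the worst such stratum.

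I expect the main obstacle to be the first step: correctly propagating the discrepancy and boundary-pullback multiplicities through the three iterated blowups. Because the strict transform of each $3A_1$ curve passes through the $4A_1$ points, and each $2A_1$ surface through both, the multiplicities do not simply add, and a single sign or multiplicity error in writing $K_{\wY}+cB+dE$ would corrupt every subsequent intersection number and misplace the walls. Once this class is computed correctly, the identification of the models and the chamber structure follows from the intersection numbers and the explicit geometry of the contractions.
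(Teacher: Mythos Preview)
Your overall strategy for identifying the log canonical models is the same as the paper's: write $\Delta_{c,d}=K_{\wY}+cB+dE$ explicitly in the boundary basis, and for each candidate model $Z$ with $\pi:\wY\to Z$ check that $\Delta_{c,d}-\pi^*\pi_*\Delta_{c,d}$ is effective and $\pi_*\Delta_{c,d}$ is ample, the latter via intersection with the extremal curve classes of \cref{thm:eff_cones_intro}. One phrasing to correct: in the intermediate chambers $\Delta_{c,d}$ is \emph{not} nef on $\wY$ (for instance $\Delta_{c,d}\cdot C_{a_2a_3a_4}=-1+c+2d<0$ when $c+2d<1$); what is nef is $\pi^*\pi_*\Delta_{c,d}$, and it is the ampleness of $\pi_*\Delta_{c,d}$ on $Z$ together with the effectivity of the exceptional discrepancy that identifies $Z$ as the log canonical model. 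Likewise, for non-effectivity in case~(1) the paper simply reads off that the $B_a$-coefficient of $\Delta_{c,d}$ is negative and invokes the effective-cone description; your anti-ample pushforward argument would need the same effectivity of $\Delta_{c,d}-\pi^*\pi_*\Delta_{c,d}$ to transfer vanishing of sections back up to $\wY$.

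The genuine gap is in your log canonical singularity argument. You propose to check discrepancies along boundary strata, singling out curves of type $aa_2e$ as ``where several Eckardt divisors concentrate.'' But the failure of $B+E$ to be simple normal crossings does not occur along the boundary at all: by \cref{prop:eckardt_naruki}(3)(a), any two Eckardt divisors with no common line meet along a surface that is simultaneously contained in a unique third Eckardt divisor, so three components of $E$ share a common codimension-two locus. A single blowup of these triple-Eckardt surfaces gives a log resolution; the exceptional divisor appears with coefficient $3d-1$ in the pullback of $\Delta_{c,d}$, and requiring this to be at most $1$ is exactly $d\le 2/3$. An analysis restricted to boundary strata (or to $aa_2e$ curves, where only one Eckardt divisor passes) would miss this locus entirely and would not produce the correct threshold.
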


\cref{thm:log_mmp_intro} is pictured in \cref{fig:log_mmp}. Note that when $d=0$, the log canonical model of $(\wY,B)$
is Naruki's compactification $\oY$. (This is also shown in \cite{hackingStablePairTropical2009}.) The log minimal model
program for $(\oY,cB_{\oY})$ is captured by the bottom line of \cref{fig:log_mmp}; this is also captured by the left
half of \cref{fig:sbl}.

The analogue of \cref{thm:log_mmp_intro} for moduli of curves is sometimes called the Hassett-Keel program, and has seen
significant interest as a method to interpolate between different geometrically interesting compactifications of the
moduli space of curves, see for instance \cite{hassettLogCanonicalModels2009, hassettLogMinimalModel2013}. Recently, a
version of the Hassett-Keel program for moduli of K3 surfaces, dubbed the \emph{Hassett-Keel-Looijenga program}, has
seen intensive study, see for instance \cite{ascherKstabilityBirationalModels2023,
lazaGITBailyBorelCompactification2018, lazaBirationalGeometryModuli2019}. \cref{thm:log_mmp_intro} is another
higher-dimensional generalization of the Hassett-Keel program.

\begin{figure}[htpb]
    \centering
     \begin{tikzpicture}[scale=0.7, transform shape]
       \draw[thick] (0,0) -- (10,0);
       \draw[thick] (0,0) -- (0,8);
       \draw[thick] (0,6) -- (10,0);
       \node[left] at (0,8) {$2/3$};
       \node[left] at (0,6) {$1/2$};
       \node[left] at (0,4) {$1/3$};
       \node[left] at (0,3) {$1/4$};
       \node[left] at (0,1) {$1/12$};
       \node[left] at (0,12/25) {$1/25$};
       \draw[thick] (0,4) -- (10,0);
       \draw[thick] (0,3) -- (10,0);
       \draw[thick] (0,1) -- (5,0);
       \draw[thick] (0,12/25) -- (10/4,0);
       \draw[thick] (10,0) -- (10,8);
       \draw[thick] (0,8) -- (10,8);
       \node[] at (6,6) {$\wY$};
       \node[] at (4,3) {$\oY_2$};
       \node[] at (3,2.5) {$\oY_1$};
       \node[] at (2,1.5) {$\oY$};
       \node[] at (1,.5) {$\oM$};
       \node[right] at (10,0) {$c$};
       \node[above] at (0,8) {$d$};
       \node[below] at (10,0) {$1$};
       \node[below] at (5,0) {$1/2$};
       \node[below] at (10/4,0) {$1/4$};

     \end{tikzpicture}
    \caption{The log minimal model program for $\wY$.}%
    \label{fig:log_mmp}
\end{figure}
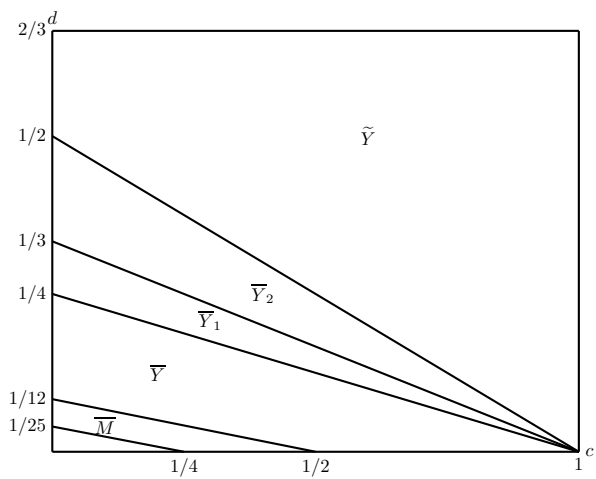

\subsection{Context}

The study of the birational geometry of moduli spaces is a topic of great interest, see for instance
\cite{keelContractibleExtremalRays1996, gibneyAmpleConeOverline2002, castravetOverlineNotMori2015,
castravetBlownupToricSurfaces2023, fedorchukAmpleDivisorsModuli2011, hassettLogCanonicalModels2009,
hassettLogMinimalModel2013, hassettEffectiveConeModuli2002, moonMoriProgramOverline2015, moonMoriProgramOverline2017,
moonLogCanonicalModels2013, chenMoriProgramModuli2011, coskunAmpleConeKontsevich2009, coskunAmpleConeModuli2016,
coskunEffectiveConeModuli2017}.

An ideal study of the birational geometry of a moduli space $\oM$ would include a complete description of the cones of
effective divisors and curves on $\oM$, a decomposition of the effective cone of divisors into chambers according to the
stable base loci of the corresponding divisors, and a description of the birational models of $\oM$ resulting from
divisors in each chamber. However, this is in general a quite difficult problem, even for the moduli space $\oM_{0,n}$
of stable $n$-pointed genus zero curves.

The effective cone of divisors of $\oM_{0,n}$ is in general not even polyhedral
\cite{castravetBlownupToricSurfaces2023}, and the F-conjecture, which asserts that any extremal ray of the effective
cone of curves of $\oM_{0,n}$ is generated by a one-dimensional boundary stratum, has been open for nearly 30 years
\cite{keelContractibleExtremalRays1996}. (A solution to the F-conjecture for $\oM_{0,n}$ would also imply the analogous
conjecture for $\coM_{g,n}$ \cite{gibneyAmpleConeOverline2002}.) The $S_n$-invariant birational geometry of $\oM_{0,n}$ is
somewhat better understood---the cone of $S_n$-invariant effective divisors is generated by the $S_n$-invariant boundary
divisors \cite{keelContractibleExtremalRays1996}, the $S_n$-invariant F-conjecture is known for $n \leq 35$
\cite{fedorchukSymmetricFconjectureLeq2020}, and a complete description of the stable base locus decomposition of the
$S_n$-invariant effective cone of divisors, and the corresponding birational models, is known for $n \leq 7$
\cite{moonMoriProgramOverline2015, moonMoriProgramOverline2017}. Our moduli spaces of interest $\oY$ and $\wY$ are natural
higher-dimensional generalizations of $\oM_{0,n}$ (cf. \cite{hackingStablePairTropical2009}), and our main results
generalize the study of the $S_n$-invariant birational geometry of $\oM_{0,n}$ to higher dimensions. To our knowledge the
present article is one of the first nontrivial examples studying in detail the birational geometry of a KSBA moduli
space of higher-dimensional stable pairs.

\subsection{Outline}

The paper is organized as follows. In \cref{sec:naruki} we describe in depth the geometry of Naruki's compactification
$\oY$; in particular, we give a detailed description of the boundary and Eckardt divisors and their intersections, and
gather the necessary results concerning intersection theory on $\oY$. In \cref{sec:bir_naruki}, we use these results to
prove the main theorems on the birational geometry of $\oY$, namely,
\cref{thm:eff_mori_Y_intro,thm:sbl_Y_intro,thm:models_intro}. In \cref{sec:wY} we carry out the similar analysis of the
geometry of the moduli space $\wY$, in preparation for \cref{sec:bir_wY} where we prove the main theorems on the birational
geometry of $\wY$, namely, \cref{thm:eff_cones_intro,thm:log_mmp_intro}.

\section{Naruki's compactification} \label{sec:naruki}

In this section we gather the necessary results concerning divisors and intersection theory on Naruki's compactification
$\oY$ of the moduli space $Y$ of marked cubic surfaces \cite{narukiCrossRatioVariety1982}, in order to study the
birational geometry of $\oY$ in the following section.

Let us recall Naruki's construction of $\oY$. Let $Z$ be the moduli space of triples $(S,m,D)$ consisting of a smooth
cubic surface $S$, a marking $m$ of $S$, and an anticanonical cycle $D$ on $S$ consisting of three $(-1)$-curves
$D_1,D_2,D_3$ forming a triangle. We assume an orientation of $D$ has been fixed; this amounts to fixing an isomorphism
$\Pic^0(D) \cong \bC^*$ (cf. \cite[Lemma 1.6]{friedmanGeometryAnticanonicalPairs2016}). The orthogonal complement
$D^{\perp} := \langle D_1,D_2,D_3 \rangle^{\perp}$ in $\Pic S$ is isomorphic to the $D_4$ root lattice $\Lambda(D_4)$
\cite[Appendix by E. Looijenga]{narukiCrossRatioVariety1982}. It follows that $\Hom(D^{\perp},\Pic^0(D)) \cong
\Hom(\Lambda(D_4),\bC^*)$ is identified with the algebraic torus $T_{\Lambda(D_4)} \cong (\bC^*)^4$ with character
lattice $\Lambda(D_4)$. The period map $Z \to \Hom(D^{\perp},\Pic^0(D))$ sending $(S,m,D)$ to $(\alpha \mapsto
\alpha\vert_D)$ induces an isomorphism of $Z$ with the complement in $T_{\Lambda(D_4)}$ of the \emph{root hypertori}
$H_{\alpha} = \{\chi \mid \chi(\alpha)=1\}$, for $\alpha$ a positive root of $D_4$. (Note $H_{\alpha}=H_{-\alpha}$.) For
more details we refer to \cite[Appendix]{narukiCrossRatioVariety1982}, see also related ideas in
\cite{looijengaRationalSurfacesAntiCanonical1981, looijengaArtinGroupsFundamental2008,
grossModuliSurfacesAnticanonical2015, friedmanGeometryAnticanonicalPairs2016}.

There is a natural compactification $\oZ$ of $Z$, the so-called \emph{minimal wonderful compactification} (cf.
\cite{deconciniProjectiveWonderfulModels2018}), which Naruki describes as follows \cite{narukiCrossRatioVariety1982}.
Let $X(\Sigma)$ be the $D_4$ Coxeter toric variety, i.e., the toric variety with torus $T_{\Lambda(D_4)}$ whose maximal
cones are the $D_4$ Weyl chambers (cf. \cite{batyrevFunctorToricVarieties2011}). Connected components of intersections
of the root hypertori $H_{\alpha}$ in $T_{\Lambda(D_4)}$ correspond to root subsystems of $D_4$ whose Dynkin diagrams
look like Dynkin diagrams obtained by deleting vertices from the extended Dynkin diagram of $D_4$ \cite[Theorem
7.17]{alexeevADESurfacesTheir2020}.

Blowing up the $D_4$ toric variety $X(\Sigma)$ in the closures of the subvarieties of $T_{\Lambda(D_4)}$ corresponding
to irreducible root subsystems of $D_4$ produces a smooth projective variety $\oZ$ such that $\oZ \setminus Z$ is a
simple normal crossings divisor. Explicitly, $\oZ$ is obtained from $X(\Sigma)$ by blowing up 1 point (the identity of
$T_{\Lambda(D_4)}$) corresponding to the $D_4$ root system itself, then 12 curves corresponding to $A_3 \subset D_4$
root subsystems, then 16 surfaces corresponding to $A_2 \subset D_4$ root subsystems.

Naruki shows that the divisors of $\oZ$ over the $A_3$ curves are each isomorphic to $\bP^1 \times \oM_{0,5}$, and each
can be blown down via the second projection $\bP^1 \times \oM_{0,5} \to \oM_{0,5}$ \cite{narukiCrossRatioVariety1982}.
The resulting variety is his ``cross-ratio variety'' $\oY$, compactifying the moduli space of marked cubic surfaces. (On
the interiors, the map $\oZ \to \oY$ is identified with the natural map forgetting the anticanonical cycle on the marked
cubic surface.)

\subsection{Divisors on $\oY$} \label{sec:div_Y}

There are three natural types of divisors of interest on Naruki's compactification.
\begin{enumerate}
    \item 36 $A_1$ divisors (corresponding to $A_1 \subset E_6$ root subsystems), parameterizing marked cubic surfaces
        with $A_1$ singularities. These appear on $\oY$ as the images of the 12 $A_1 \subset D_4$ root hypersurfaces of
        $\oZ$, and 24 of the 48 boundary divisors of $X(\Sigma)$, see \cite{narukiCrossRatioVariety1982,
        colomboChowGroupModuli2004}.
    \item 40 $A_2^3$ divisors (corresponding to $A_2^3 = A_2 \times A_2 \times A_2 \subset E_6$ root subsystems),
        parameterizing marked cubic surfaces which are the reducible union of three planes meeting transversally (e.g.,
        $xyz=0$ in $\bP^3$). These appear on $\oY$ as the images of the 16 $A_2 \subset D_4$ divisors of $\oZ$, and the
        remaining 24 of the 48 boundary divisors of $X(\Sigma)$, again see \cite{narukiCrossRatioVariety1982,
        colomboChowGroupModuli2004}.
    \item 45 Eckardt divisors, parameterizing marked cubic surfaces with Eckardt points. These appear on $\oY$ as the
        images of the $D_4$ divisor of $\oZ$ (the strict transform of the exceptional divisor over the identity of the
        torus $T_{\Lambda(D_4)}$), and 44 more divisors of $\oZ$ obtained from hypersurfaces of $T_{\Lambda(D_4)}$ which
        are explicitly written down in \cite{narukiCrossRatioVariety1982}. The Eckardt divisors are labeled as follows.
        Suppose $S$ is a smooth cubic surface obtained by blowing up 6 points $p_1,\ldots,p_6$ in $\bP^2$ in general
        position. Then the lines on $S$ are labeled by $e_i$ for $i=1,\ldots,6$ (the exceptional divisor over $p_i$),
        $c_i$ for $i=1,\ldots,6$ (the conic passing through $p_j$ for $j \neq i$), and $\ell_{ij}$ for $ij \subset [6]$
        (the line through $p_i$ and $p_j$). The possible Eckardt points on $S$ are given by two types of triples of
        lines: 30 of the form $\{e_i,c_j,\ell_{ij}\}$, and 15 of the form $\{\ell_{ij},\ell_{kl},\ell_{mn}\}$. These
        triples give the labels of the 45 Eckardt divisors. We say that two Eckardt divisors have a common line if the
        corresponding triples have a common line. (In \cite{narukiCrossRatioVariety1982,
        vangeemenLinearSystemNaruki2002, colomboChowGroupModuli2004}, Eckardt divisors are instead called
        \emph{tritangent} divisors.)
\end{enumerate}
We write $B_{A_1}$ for the sum of the $A_1$ divisors and $B_{A_2^3}$ for the sum of the $A_2^3$ divisors.

From Naruki's construction of $\oY$ one is able to read off a wealth of information concerning these divisors and their
intersections, as summarized below. For more information we refer to \cite{narukiCrossRatioVariety1982,
vangeemenLinearSystemNaruki2002, colomboChowGroupModuli2004}.

\begin{notation}
    Recall that the divisors of $\oM_{0,n}$ are labeled by $D_I = D_{I,I^c}$, with $\lvert I \rvert, \lvert I^c \rvert
    \geq 2$, parameterizing stable genus $0$ curves with 2 irreducible components, such that the points from $I$ are on
    one component, and the remaining points on the other component. When we are not concerned with the particular
    boundary divisor, we write $D_{\lvert I \rvert}$ to denote a boundary divisor of the form $D_I$, and we write
    $B_{\lvert I \rvert}$ for the sum of all such divisors.

    We also recall the Keel-Vermeire divisors on $\oM_{0,6}$ \cite{vermeireCounterexampleFultonConjecture2002}. For
    $(ij)(kl)(mn) \in S_6$, let $\oM_{0,6} \to \coM_3$ be the morphism sending a genus $0$ curve with 6 marked points to
    a genus 3 curve by identifying the marked points $i$ and $j$, $k$ and $l$, and $m$ and $n$. The Keel-Vermeire
    divisor associated to $(ij)(kl)(mn)$ is the pullback of the hyperelliptic locus of $\coM_3$. There are 15
    Keel-Vermeire divisors on $\oM_{0,6}$, and they are effective divisors which cannot be written as effective sums of
    boundary divisors.
\end{notation}

\begin{proposition} \label{prop:A1_naruki}
    There are 36 $A_1$ divisors on $\oY$. Let $D = D_{A_1}$ be an $A_1$ divisor. Then $D \cong \oM_{0,6}$, and the
    nonempty intersections of $D$ with the Eckardt divisors and the other boundary divisors of $\oY$ are as follows.
    \begin{enumerate}
        \item $D$ intersects 15 other $A_1$ divisors, $D_{A_1'}$ for $A_1'$ orthogonal to $A_1$, in the 15 divisors
            $D_{ij}$ on $\oM_{0,6}$.
        \item $D$ intersects 10 $A_2^3$ divisors, $D_{A_2^3}$ for $A_1 \subset A_2^3$, in the 10 divisors $D_{ijk}$ on
            $\oM_{0,6}$.
        \item $D$ intersects every Eckardt divisor $D_e$.
            \begin{enumerate}
                \item If the lines of $D_e$ are lines on the $A_1$-nodal cubic surface generically parameterized by $D$,
                    then $D_e$ restricts to a Keel-Vermeire divisor on $\oM_{0,6}$. There are 15 such divisors.
                \item Otherwise, there is a unique second Eckardt divisor $D_e'$ and a unique second $A_1$ divisor
                    $D_{A_1'}$, such that
                    \[
                        D_e \cap D_{A_1} = D_e' \cap D_{A_1'} = D_{A_1} \cap D_{A_1'} = D_{ij}.
                    \]
                    There are 15 such pairs.
            \end{enumerate}
    \end{enumerate}
    The class of the restriction of $D_{A_1}$ to itself is
    \[
        \frac{-B_2 - 3B_3}{5}.
    \]
\end{proposition}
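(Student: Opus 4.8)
The plan is to compute $\mathcal{O}_D(D) = N_{D/\oY}$ as a divisor class on $D \cong \oM_{0,6}$. The first observation is that this class is invariant under the stabilizer of $D$ in $W(E_6)$, which acts on $D \cong \oM_{0,6}$ through the standard $S_6$-action permuting the six marked points (the stabilizer is $W(A_1^\perp) = W(A_5) = S_6$ up to the reflection in $A_1$, which acts trivially on $\oM_{0,6}$). Since linear and numerical equivalence agree on $\oM_{0,6}$ and its Néron--Severi group is generated by boundary, the $S_6$-invariant part is spanned by the orbit sums $B_2$ and $B_3$; hence $\mathcal{O}_D(D) = a B_2 + b B_3$ for rationals $a,b$, and it remains only to pin down the two coefficients. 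Parts (1) and (2) already supply the restrictions of the two invariant boundary divisors: the $15$ other $A_1$ divisors meeting $D$ cut out exactly the $15$ divisors $D_{ij}$ (the components of $B_2$) and the $10$ intersecting $A_2^3$ divisors cut out the $10$ divisors $D_{ijk}$ (the components of $B_3$), so restriction gives $B_{A_1}\vert_D = \mathcal{O}_D(D) + B_2$ and $B_{A_2^3}\vert_D = B_3$. These identities relate the restrictions of the global invariant divisors to the unknown class, but do not determine it; one genuinely global input is still required.

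I would obtain that input by adjunction, $\mathcal{O}_D(D) = K_D - K_{\oY}\vert_D = K_{\oM_{0,6}} - K_{\oY}\vert_D$. The class $K_{\oM_{0,6}}$ is a standard combination of $B_2$ and $B_3$, while $K_{\oY}$ is the combination of the invariant boundary and Eckardt divisors coming from the intersection theory of this section. Restricting $K_{\oY}$ using the identities above, together with part (3) --- which identifies the restrictions of the Eckardt divisors with Keel--Vermeire divisors, whose $S_6$-invariant sum is itself a combination of $B_2$ and $B_3$ --- turns adjunction into a single linear identity in $\langle B_2, B_3\rangle$. Note that the $B_{A_1}$ term reintroduces $\mathcal{O}_D(D)$ on the right-hand side via $B_{A_1}\vert_D = \mathcal{O}_D(D)+B_2$, but the resulting equation still solves uniquely for $(a,b)$. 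As an independent cross-check, one can instead determine $(a,b)$ by pairing $\mathcal{O}_D(D) = D^2$ against two boundary curves $C \subset D$ whose classes as F-curves on $\oM_{0,6}$ are known, reading off the triple self-intersection numbers $D^2 \cdot C$ and comparing with $(aB_2+bB_3)\cdot C$.

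The main obstacle is this global computation: every route requires either $K_{\oY}$ or the triple numbers $D^2 \cdot C$, and these must ultimately be extracted from Naruki's explicit realization of $\oY$ as a sequence of blow-ups of the $D_4$ Coxeter toric variety $X(\Sigma)$ followed by the blow-downs of the $A_3$ divisors. Concretely I would track a single $A_1$ divisor --- either one of the $12$ root hypertori $H_\alpha$ or one of the $24$ relevant toric boundary divisors of $X(\Sigma)$ --- through each blow-up and blow-down, recording how its normal bundle changes at each step, and take care with the bookkeeping of which exceptional and Eckardt divisors pass through the chosen strata. Once this global data is assembled, solving the resulting $2\times 2$ system yields $\mathcal{O}_D(D) = \tfrac{-B_2 - 3B_3}{5}$.
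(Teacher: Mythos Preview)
Your adjunction strategy for the self-intersection formula is sound and does yield $\mathcal{O}_D(D)=\frac{-B_2-3B_3}{5}$: once one inputs $K_{\oY}=\frac{-B_{A_1}+B_{A_2^3}}{4}$ and $K_{\oM_{0,6}}=\frac{-2B_2-B_3}{5}$, the identity $D\vert_D = K_{\oM_{0,6}} - K_{\oY}\vert_D$ together with $B_{A_1}\vert_D = D\vert_D + B_2$ and $B_{A_2^3}\vert_D = B_3$ solves to $a=-1/5$, $b=-3/5$ exactly as you say. This is, however, a different route from the paper's proof, which simply attributes the entire statement---self-intersection included---to Naruki's explicit construction as worked out in the cited references (Colombo--van Geemen in particular), and supplies an argument for only one point: the identification in (3)(a) of the restricted Eckardt divisors with Keel--Vermeire divisors. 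The paper does this by taking the $A_1$ divisor parameterizing six points on a conic and observing that the Eckardt condition $\ell_{ij}\cap\ell_{kl}\cap\ell_{mn}\neq\emptyset$ is precisely the hyperelliptic pullback defining the Keel--Vermeire divisor of type $(ij)(kl)(mn)$. You do not address this identification, and it is the one thing the paper actually argues rather than cites.

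Two remarks on your approach. First, the Eckardt divisors are irrelevant to the adjunction step: $K_{\oY}$ is already a combination of $B_{A_1}$ and $B_{A_2^3}$ alone, so invoking part (3) and the Keel--Vermeire classes to restrict $K_{\oY}$ is unnecessary. Second, be explicit about the logical dependency: you are using the formula for $K_{\oY}$ (the paper's Proposition~\ref{prop:int_Y}, taken from Colombo--van Geemen) to deduce the normal bundle of $D_{A_1}$, so you need that source to have computed $K_{\oY}$ directly from the blowup/blowdown description rather than via adjunction on the $A_1$ divisors themselves. It does, so there is no circularity, but this should be said.
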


\begin{proof}
    This all follows from Naruki's explicit construction of $\oY$ \cite{narukiCrossRatioVariety1982}, also described in
    greater detail in \cite{sekiguchiCrossRatioVarieties1994, vangeemenLinearSystemNaruki2002,
    colomboChowGroupModuli2004, hackingStablePairTropical2009}. The only part not explained in detail in \textit{loc.
    cit.} is the intersections with the Eckardt divisors which give the Keel-Vermeire divisors on $\oM_{0,6}$. For this, it
    suffices to consider one particular example.  Suppose $D$ is the $A_1$ divisor obtained by blowing up 6 points on a
    conic. Then the three lines $\ell_{ij}$, $\ell_{kl}$, $\ell_{mn}$ intersect in a point precisely when the marked genus
    $0$ curve given by the conic through the 6 points lies in the pullback of the hyperelliptic locus of $\coM_3$. Thus the
    intersection with the Eckardt divisor given by $\{\ell_{ij},\ell_{kl},\ell_{mn}\}$ is the Keel-Vermeire divisor of type
    $(ij)(kl)(mn)$.
\end{proof}

\begin{proposition} \label{prop:A23_naruki}
    There are 40 $A_2^3$ divisors on $\oY$. Let $D = D_{A_2^3}$ be an $A_2$ divisor. Then $D \cong (\bP^1)^3$, and the
    nonempty intersections of $D$ with the Eckardt divisors and the other boundary divisors of $\oY$ are as follows.
    \begin{enumerate}
        \item $D$ intersects 9 $A_1$ divisors, $D_{A_1}$ for $A_1 \subset A_2^3$, in the 9 divisors $p \times \bP^1
            \times \bP^1$, $\bP^1 \times p \times \bP^1$, $\bP^1 \times \bP^1 \times p$, for $p \in \{0,1,\infty\}$.
        \item $D$ is disjoint from the other $A_2^3$ divisors.
        \item $D$ intersects 18 Eckardt divisors in 18 smooth hypersurfaces, 6 each of the classes $h_1+h_2$, $h_1+h_3$,
            $h_2+h_3$, where $h_1,h_2,h_3$ are the classes of the 3 rulings on $(\bP^1)^3$. If one fixes the $k$th
            $\bP^1$, then the hypersurfaces of class $h_i+h_j$ are determined uniquely by specifying that they pass
            through the three distinct points $p_1 \times q_1, p_2 \times q_2, p_3 \times q_3$ in the remaining $\bP^1
            \times \bP^1$, where $p_l,q_l \in \{0,1,\infty\}$.
    \end{enumerate}
    The class of the restriction of $D_{A_2^3}$ to itself is
    \[
        -h_1-h_2-h_3.
    \]
\end{proposition}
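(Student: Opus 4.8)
The plan is to follow the same strategy as the proof of \cref{prop:A1_naruki}: almost all of the assertions can be read directly off Naruki's explicit construction of $\oY$ and are recorded in \cite{narukiCrossRatioVariety1982, sekiguchiCrossRatioVarieties1994, vangeemenLinearSystemNaruki2002, colomboChowGroupModuli2004}, and the one part requiring a genuine argument is the intersection with the Eckardt divisors. To begin, the count of $40$ divisors is exactly the $16 + 24$ recorded in \cref{sec:div_Y}: the $16$ $A_2 \subset D_4$ divisors of $\oZ$ together with $24$ of the $48$ toric boundary divisors of $X(\Sigma)$. The isomorphism $D \cong (\bP^1)^3$ and the self-intersection class $-h_1-h_2-h_3$ I would extract by tracking the normal bundle of each such divisor through the sequence of blowups and the blowdown $\oZ \to \oY$ defining $\oY$; the self-intersection can also be cross-checked against adjunction, since $K_{(\bP^1)^3} = -2h_1-2h_2-2h_3$ once Naruki's restriction $K_{\oY}|_D$ is known.

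For the $A_1$ intersections in part (1) and the disjointness in part (2), I would appeal to the intersection criterion recalled in the introduction. Since $A_2^3$ has rank $6$ and therefore spans, its orthogonal complement in $E_6$ is trivial; thus nothing is orthogonal to $A_2^3$, and the only subsystems meeting it are the nested ones. The $A_1$ subsystems nested in $A_2^3$ are precisely the nine spanned (up to sign) by the nine positive roots of $A_2^3$ --- three in each $A_2$ factor --- and these correspond to the nine coordinate divisors $\{p\} \times \bP^1 \times \bP^1$, $\bP^1 \times \{p\} \times \bP^1$, $\bP^1 \times \bP^1 \times \{p\}$ for $p \in \{0,1,\infty\}$, with the product structure supplied by the construction. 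A second $A_2^3$ is neither orthogonal to nor nested in $D$'s subsystem (both being distinct and of full rank $6$), so the two divisors are disjoint.

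The main obstacle is part (3), the intersection with the Eckardt divisors. Here I would argue as in \cref{prop:A1_naruki} by passing to the explicit coordinate description: restrict the explicit equations of the $45$ tritangent hypersurfaces written down in \cite{narukiCrossRatioVariety1982, vangeemenLinearSystemNaruki2002} to a single $A_2^3$ divisor $D \cong (\bP^1)^3$, and read off which restrictions are nonzero together with their classes. The expected bookkeeping is that exactly $18 = 3 \times 6$ Eckardt divisors meet $D$: for each of the three pairs $\{i,j\} \subset \{1,2,3\}$ of $\bP^1$-factors, the six divisors restricting to class $h_i + h_j$ correspond to the six ``transversal'' configurations of three points $p_1 \times q_1, p_2 \times q_2, p_3 \times q_3$ (with $p_l, q_l \in \{0,1,\infty\}$) in the $(i,j)$-plane $\bP^1 \times \bP^1$, matching the $3! = 6$ permutations of $\{0,1,\infty\}$. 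Since the space of $(1,1)$-forms on $\bP^1 \times \bP^1$ is $4$-dimensional, a divisor of class $h_i + h_j$ through three such points is unique, which yields the final uniqueness statement.

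The delicate point is matching Naruki's labeling of the tritangent divisors by the triples $\{e_i, c_j, \ell_{ij}\}$ and $\{\ell_{ij}, \ell_{kl}, \ell_{mn}\}$ to these transversal configurations, and checking that the remaining $27$ Eckardt divisors restrict to zero on $D$. I would carry this out on one explicit $A_2^3$ divisor and then propagate the conclusion to all $40$ by the transitivity of the $W(E_6)$-action on the $A_2^3$ divisors, exactly as the single explicit example sufficed in the proof of \cref{prop:A1_naruki}.
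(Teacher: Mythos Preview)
Your proposal is correct and follows essentially the same approach as the paper: both defer everything to Naruki's explicit construction and the detailed accounts in \cite{narukiCrossRatioVariety1982, sekiguchiCrossRatioVarieties1994, vangeemenLinearSystemNaruki2002, colomboChowGroupModuli2004, hackingStablePairTropical2009}. The only difference is one of emphasis: you single out part (3) as ``the one part requiring a genuine argument'' and sketch how to verify it by restricting Naruki's explicit tritangent equations to a fixed $A_2^3$ divisor, whereas the paper's proof is terser still and treats part (3) as already fully recorded in the literature (in particular \cite{vangeemenLinearSystemNaruki2002, colomboChowGroupModuli2004} and \cite[Proposition 5.4]{casalaina-martinNonisomorphicSmoothCompactifications2022}), without the separate argument that was needed for the Keel--Vermeire divisors in \cref{prop:A1_naruki}.
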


\begin{proof}
    Again this all follows from Naruki's construction \cite{narukiCrossRatioVariety1982}, as described in detail in
    \cite{sekiguchiCrossRatioVarieties1994, vangeemenLinearSystemNaruki2002, colomboChowGroupModuli2004,
    hackingStablePairTropical2009} (see also \cite[Proposition
    5.4]{casalaina-martinNonisomorphicSmoothCompactifications2022}).
\end{proof}

\begin{proposition} \label{prop:eckardt_naruki}
    There are 45 Eckardt divisors on $\oY$. Let $D=D_e$ be an Eckardt divisor. Then $D$ is isomorphic to the minimal
    wonderful compactification $\oX(D_4)$ of the $D_4$ hyperplane arrangement $x_i = \pm x_j$, $i \neq j$, in $\bP^3$,
    obtained by blowing up the 12 points corresponding to $A_3 \subset D_4$ root subsystems, followed by the 16 lines
    corresponding to $A_2 \subset D_4$ hyperplanes. The intersections of $D$ with the other Eckardt divisors and the
    boundary divisors of $\oY$ are as follows.
    \begin{enumerate}
        \item $D$ intersects every $A_1$ divisor.
            \begin{enumerate}
                \item If the lines of $D$ are lines on the $A_1$-nodal cubic surface generically parameterized by
                    $D_{A_1}$, then $D_{A_1}$ restricts to one of the $D_4$ hyperplanes on $D$. There are 12 such
                    intersections, each isomorphic to $Bl_3\bP^2$.
                \item Otherwise, there is a unique second $A_1$ divisor $D_{A_1'}$ such that
                    \begin{align*}
                        D \cap D_{A_1} = D \cap D_{A_1'} = D_{A_1} \cap D_{A_1'}.
                    \end{align*}
                    On $D$, this intersection is the strict transform of the exceptional divisor over one of the
                    blown up points. There are 12 such pairs of intersections. Each intersection is isomorphic to
                    $Bl_4\bP^2 \cong \oM_{0,5}$.
            \end{enumerate}
        \item $D$ intersects 16 $A_2^3$ divisors. These intersections occur precisely when the lines of $D$ all lie on
            the same one of the three planes in the reducible cubic surface generically parameterized by the given
            $A_2^3$ divisor. These intersections give the exceptional divisors over the 16 blown up lines on $D$. Each
            such intersection is isomorphic to $\bP^1 \times \bP^1$.
        \item $D$ intersects every other Eckardt divisor.
            \begin{enumerate}
                \item If $D'$ is an Eckardt divisor having no common lines with $D$, then there is a unique third
                    Eckardt divisor $D''$ such that the intersection of any 2 of $D,D',D''$ is the same as the
                    intersection of all 3. There are 16 such pairs, restricting in $D$ to the strict transforms of 16
                    smooth quadrics in $\bP^3$. Each such quadric is determined by one of the blown-up lines, by the
                    condition that the quadric passes through all of the blown-up points except for the three lying on
                    the chosen line \cite[Section 6.7]{vangeemenLinearSystemNaruki2002}. In particular, each such
                    intersection is isomorphic to the blowup of $\bP^1 \times \bP^1$ at 9 points $p \times q$ for $p,q
                    \in \{0,1,\infty\}$.
                \item If $D$ and $D'$ have a common line, then their intersection is reducible. On $D$ it looks like the
                    union of the strict transform of an $F_4$ hyperplane $x_i=0$ or $x_1 \pm x_2 \pm x_3 \pm x_4 = 0$,
                    together with the strict transform of the exceptional divisor over one of the blown up points which
                    is contained in the given $F_4$ hyperplane. There are 12 such intersections.
            \end{enumerate}
    \end{enumerate}
\end{proposition}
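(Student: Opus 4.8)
The plan is to read off every incidence from Naruki's explicit construction of $\oZ$ and $\oY$, using the transitivity of the $W(E_6)$-action on the 45 Eckardt divisors (it permutes the 45 tritangent configurations of the 27 lines) to reduce to a single conveniently chosen one. The natural choice is the divisor $D$ arising as the image of the $D_4$ divisor of $\oZ$, that is, the strict transform of the exceptional divisor over the identity $1 \in T_{\Lambda(D_4)}$. Since every root hypertorus $H_\alpha$ passes through $1$, the exceptional divisor over $1$ is, before the wonderful blowups, the projectivized tangent space $\bP(\Lambda(D_4) \otimes \bC) \cong \bP^3$, and the traces of the $H_\alpha$ on it are exactly the $12$ $D_4$ root hyperplanes $x_i = \pm x_j$. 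Naruki's subsequent blowups of the $12$ points and $16$ curves of $\oZ$ corresponding to the $A_3$ and $A_2$ subsystems of $D_4$ restrict on this $\bP^3$ to blowups of the $12$ points and $16$ lines of the $D_4$ arrangement; this identifies $D$ with $\oX(D_4)$ and, more importantly, produces a dictionary between the boundary strata of $D$ and the root subsystems of $D_4$.

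With this dictionary, parts (1) and (2) reduce to sorting the $A_1$ and $A_2^3$ divisors by their origin in Naruki's construction. For (1), the $12$ $A_1$ divisors coming from the hypertori $H_\alpha$ meet $D$ along the corresponding $D_4$ hyperplanes, and I would confirm that each such hyperplane becomes $Bl_3\bP^2$ after the wonderful blowups by counting the $A_3$ points and $A_2$ lines it contains (case 1a). The remaining $24$ $A_1$ divisors come from the toric boundary of $X(\Sigma)$; I would show these meet $D$ in the strict transforms of the exceptional divisors over the $12$ blown-up $A_3$ points, with two such $A_1$ divisors meeting $D$ along each exceptional divisor (yielding the $12$ coincidence pairs of case 1b), and that each such exceptional divisor is $\cong Bl_4\bP^2 \cong \oM_{0,5}$. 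Part (2) is entirely parallel: exactly the $16$ $A_2^3$ divisors arising from the $A_2 \subset D_4$ divisors of $\oZ$ meet $D$, along the $16$ exceptional divisors over the blown-up $A_2$ lines (each $\cong \bP^1 \times \bP^1$), while the $24$ toric-boundary $A_2^3$ divisors are disjoint from $D$.

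The genuinely delicate part is (3), the mutual incidences of the Eckardt divisors, because here the traces are not coordinate hyperplanes but must be extracted from the explicit equations of Naruki's remaining $44$ tritangent hypersurfaces, as tabulated by van Geemen \cite{vangeemenLinearSystemNaruki2002}. The combinatorial backbone is the partition of these $44$ divisors according to whether the tritangent triple shares a line with that of $D$: the $12$ that share a line (each of the three lines of $D$ lies in four other tritangent planes, giving $3 \times 4 = 12$, case 3b), and the $32$ that do not, the latter grouping into $16$ pairs $\{D',D''\}$ (case 3a). For (3a) I would use van Geemen's equations to see that the common trace $D \cap D' = D \cap D'' = D' \cap D''$ is the strict transform of a smooth quadric, then verify combinatorially that this quadric passes through the nine $A_3$ points off a fixed $A_2$ line and misses the three on it, so that its strict transform is a blowup of $\bP^1 \times \bP^1$ at nine points; the triple coincidence itself is the subtle point, to be checked directly from the equations or from the $W(E_6)$-orbit structure. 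For (3b), the notable feature is reducibility: I would identify the trace as the union of the strict transform of an $F_4$ hyperplane ($x_i = 0$ or $x_1 \pm x_2 \pm x_3 \pm x_4 = 0$) and the strict transform of the exceptional divisor over one of the blown-up points lying on it. Throughout, a useful consistency check is that all the counts ($12 + 24$, $16 + 24$, $12 + 32$) match the orbit sizes predicted by the stabilizer of $D$ in $W(E_6)$.
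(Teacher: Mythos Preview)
Your overall approach matches the paper's: reduce everything to Naruki's explicit construction, exploit $W(E_6)$-transitivity to work with a single Eckardt divisor, and read off the incidences. For parts (1), (2), and (3a) your plan is essentially what the paper does (it simply cites Naruki, Colombo--van Geemen, and van Geemen for these).

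For (3b) there is a genuine subtlety you have not addressed, and your plan as written would miss it. You propose to extract the trace from van Geemen's tables, but the paper explicitly points out that van Geemen's description of this case is \emph{misstated}: he records only the $F_4$-hyperplane component and omits the extra exceptional-divisor component. So following van Geemen here gives the wrong answer. More seriously, your choice of $D$ as the $D_4$ exceptional divisor makes the reducibility invisible at the level you are working at. If $D'$ is one of the 44 hypersurface Eckardt divisors, its trace on the exceptional $\bP^3$ over the identity is the projectivized tangent cone of $D'$ at $1$, which is a single irreducible hyperplane---the $F_4$ hyperplane. The second component is not present in $\oZ$ at all: it appears only after the contraction $\oZ \to \oY$, which collapses each $A_3$ divisor $\bP^1 \times \oM_{0,5} \to \oM_{0,5}$ and thereby identifies the fiber along which $D$ meets it with the \emph{different} fiber along which $D'$ meets it. You do not mention this mechanism, and without it your computation of $D \cap D'$ will come out irreducible.

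The paper handles this differently. Rather than fixing $D$ as the $D_4$ exceptional, it takes \emph{both} Eckardt divisors to be hypersurfaces in the torus---for instance $\{\lambda\nu = 1\}$ and $\{\lambda = \nu\}$ in Naruki's simple-root coordinates. Their intersection in $(\bC^*)^4$ visibly splits into two components $\{\lambda=\nu=1\}$ and $\{\lambda=\nu=-1\}$, corresponding to a reducible $2A_1$ subsystem (so neither is blown up in $\oZ$), and tracing these through $\oY \leftarrow \oZ \to X(\Sigma)$ yields the two pieces of the claimed union directly. The paper also supplies an a priori consistency argument, using the coincidence $D \cap D_{A_1} = D' \cap D_{A_1} = D_{A_1} \cap D_{A_1'}$ from \cref{prop:A1_naruki}, to show that $D \cap D'$ \emph{must} contain the exceptional component $P$, independently of any explicit computation.
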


\begin{proof}
    This also follows from Naruki's construction \cite{narukiCrossRatioVariety1982}, as described in detail in
    \cite{colomboChowGroupModuli2004, vangeemenLinearSystemNaruki2002}. However, we note that part of the result is
    misstated in \cite[Section 6.7]{vangeemenLinearSystemNaruki2002}, where it is stated that if $D$ and $D'$ have a
    common line, then their intersection consists only of the strict transform $H$ of an $F_4$ hyperplane, rather than
    the union of $H$ with the strict transform $P$ of an exceptional divisor over a blown up point. However, this is
    inconsistent with \cref{prop:A1_naruki}. Indeed, suppose $P = D \cap D_{A_1} = D \cap D_{A_1'}$, as in
    \cref{prop:A1_naruki} and part 1(b) of the current proposition. Then by \cref{prop:A1_naruki}, $D \cap D_{A_1} \cap
    D_{A_1'} = D' \cap D_{A_1} \cap D_{A_1'}$. This is of course false unless $D \cap D'$ contains $P$. The corrected
    description follows from Naruki's construction and his explicit description of the Eckardt divisors starting from
    their equations in $Z \subset T_{\Lambda(D_4)} \cong (\bC^*)^4$ \cite[Section 8]{narukiCrossRatioVariety1982}.
    Namely, Naruki puts coordinates $\lambda,\mu,\nu,\rho$ on $(\bC^*)^4$ corresponding to a basis of simple roots of
    $D_4$, and writes down explicit equations in terms of these coordinates for each Eckardt divisor \cite[Table
    3]{narukiCrossRatioVariety1982}. For instance, two such Eckardt divisors are given by $\{\lambda\nu = 1\}$ and
    $\{\lambda = \nu\}$. The intersection of these two divisors has two connected components, $\{\lambda = \nu = 1\}$,
    $\{\lambda = \nu = -1\}$. Neither of these components is blown up by $\oZ(D_4) \to X(\Sigma)$, since the root
    subsystem of $D_4$ spanned by the roots corresponding to $\lambda$ and $\nu$ is reducible, an $A_1^2$ subsystem.
    Tracing these intersections through Naruki's construction $\oY \leftarrow \oZ(D_4) \rightarrow X(\Sigma)$, one sees
    the correct description.
\end{proof}

\begin{example} \label{ex:eckardt_naruki}
    Let $S$ be a smooth cubic surface obtained as the blowup of 6 points in $\bP^2$, and write $\Pic S = \langle
    h,e_1,\ldots,e_6 \rangle$, where $h$ is the pullback of the hyperplane class and $e_i$ is the class of the
    exceptional divisor over the $i$th point.

    Recall that the root system $E_6$ is given by
    \[
        E_6 = \{\alpha \in \Pic S \mid \alpha \cdot K_S = 0, \alpha^2 = -2\}.
    \]
    Explicitly, we label the positive roots of $E_6$ by $ij=e_i-e_j$, $ijk=h-e_i-e_j-e_k$, and
    $7=\beta=2h-e_1-\cdots-e_6$ (cf. \cite[Remark 4.9]{hackingStablePairTropical2009}). We identify $A_1$ subsystems
    with positive roots.

    We consider the Eckardt divisor $D_e \cong \oX(D_4)$ in $\oY$ corresponding to the triple of lines
    $\{e_5,c_6,\ell_{56}\}$.
    \begin{enumerate}
        \item The divisors $D_{A_1}$ such that the lines of $D_e$ are lines on the $A_1$-nodal cubic parameterized by
            $D_{A_1}$ are given by $A_1 = ij$ or $ij6$, for $ij \subset [4]=\{1,2,3,4\}$. (Here and elsewhere $ij =
            \{i,j\}$.) These cut out the strict transforms of the 12 $D_4$ hyperplanes on $D_e$.
        \item The remaining $D_{A_1}$ divisors come in pairs of orthogonal $A_1$'s, cutting out the strict transforms of
            the exceptional divisors of $\oX(D_4)$ over the 12 blown up points. For instance, if $A_1 = 7$ and $A_1' =
            56$, then $D_e \cap D_{A_1} = D_e \cap D_{A_1'}$ is the exceptional divisor over the point $(1:1:1:1)$.
        \item The Eckardt divisors sharing a line with $D_e$ intersect $D_e$ in the reducible union of the strict
            transform of an $F_4$ hyperplane and the strict transform of the exceptional divisor over one of the 12
            blown up points. For instance, if $D_e'$ is given by the triple of lines $\{e_6,c_5,\ell_{56}\}$, then $D_e
            \cap D_e'$ is the union of the hyperplane $\{x_1-x_2+x_3-x_4=0\}$ and the strict transform of the
            exceptional divisor over the point $(1:1:1:1)$. Note that this and the previous part are consistent with the
            fact that $D_e \cap D_{A_1} = D_e' \cap D_{A_1} = D_{A_1} \cap D_{A_1'}$ for $A_1=7$, $A_1'=56$, as expected
            from \cref{prop:A1_naruki}.
    \end{enumerate}
\end{example}

\subsection{Intersection-theoretic results on $\oY$}

The intersection theory of $\oY$ has been studied in depth in \cite{colomboChowGroupModuli2004}. Additional results,
including a complete presentation of its Chow ring, are obtained in \cite[Theorem
1.9]{schockQuasilinearTropicalCompactifications2021}. In addition to the descriptions of intersections and
self-intersections of boundary and Eckardt divisors above, we also have need of the following results.

\begin{proposition}[{\cite{colomboChowGroupModuli2004}}] \label{prop:int_Y}
    \begin{enumerate}
        \item We have $\Pic(\oY)^{W(E_6)} \cong \bZ^2$, generated by the sum $B_{A_1}$ of the $A_1$ divisors, and the
            sum $B_{A_2^3}$ of the $A_2^3$ divisors.
        \item The class of the sum $E$ of the Eckardt divisors on $\oY$ is given by
            \[
                E = \frac{25B_{A_1} + 27B_{A_2^3}}{4}.
            \]
        \item The canonical class of $\oY$ is given by
            \[
                K_{\oY} = \frac{-B_{A_1} + B_{A_2^3}}{4}.
            \]
    \end{enumerate}
\end{proposition}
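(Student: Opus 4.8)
The three statements are all due to \cite{colomboChowGroupModuli2004}; the plan is to indicate how each follows from the restriction data of \cref{prop:A1_naruki,prop:A23_naruki} together with the computation of $\Pic(\oY)^{W(E_6)}$. For part (1), the sums $B_{A_1}$ and $B_{A_2^3}$ are visibly $W(E_6)$-invariant, since the 36 $A_1$ divisors and the 40 $A_2^3$ divisors each form a single $W(E_6)$-orbit. To see they are linearly independent I would restrict to a single $A_1$ divisor $D \cong \oM_{0,6}$: by \cref{prop:A1_naruki} one has $B_{A_2^3}|_D = B_3$, while $B_{A_1}|_D = D|_D + \sum_{A_1' \perp A_1} D_{A_1'}|_D = \tfrac{-B_2-3B_3}{5} + B_2 = \tfrac{4B_2-3B_3}{5}$, and $B_2,B_3$ are independent on $\oM_{0,6}$. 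That these two classes in fact generate the full invariant lattice $\Pic(\oY)^{W(E_6)} \cong \bZ^2$, rather than a finite-index sublattice, is the substantive input, and follows from the computation of the Chow ring of $\oY$ as a $W(E_6)$-representation in \cite{colomboChowGroupModuli2004}.

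Granting part (1), both $E$ and $K_{\oY}$ are $W(E_6)$-invariant and hence of the form $\alpha B_{A_1} + \beta B_{A_2^3}$, and the remaining task is to pin down the coefficients by restriction. For part (3) I would use adjunction on $D = D_{A_1} \cong \oM_{0,6}$, namely $K_{\oM_{0,6}} = (K_{\oY}+D)|_D$. The standard formula for the canonical class of $\oM_{0,n}$ gives $K_{\oM_{0,6}} = \tfrac{-2B_2 - B_3}{5}$; plugging in the restrictions from \cref{prop:A1_naruki} and matching the coefficients of $B_2$ and $B_3$ produces two linear equations in $(\alpha,\beta)$ whose solution is $(-\tfrac14,\tfrac14)$, i.e. $K_{\oY} = \tfrac{-B_{A_1}+B_{A_2^3}}{4}$. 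As a consistency check, the same computation on $D_{A_2^3} \cong (\bP^1)^3$, using $K_{(\bP^1)^3} = -2(h_1+h_2+h_3)$ together with $B_{A_1}|_D = 3(h_1+h_2+h_3)$ and $B_{A_2^3}|_D = D|_D = -(h_1+h_2+h_3)$ from \cref{prop:A23_naruki}, reproduces the same answer.

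For part (2) I would again restrict. On $D_{A_2^3} \cong (\bP^1)^3$, \cref{prop:A23_naruki} gives $E|_D = 6(h_1+h_2) + 6(h_1+h_3) + 6(h_2+h_3) = 12(h_1+h_2+h_3)$, while $B_{A_1}|_D = 3(h_1+h_2+h_3)$ and $B_{A_2^3}|_D = -(h_1+h_2+h_3)$; this yields the clean equation $3\alpha - \beta = 12$. The second equation comes from restricting to $D_{A_1} \cong \oM_{0,6}$: by \cref{prop:A1_naruki} the 30 Eckardt divisors of type (b) restrict to the 15 boundary divisors $D_{ij}$, two to each, contributing $2B_2$, while the 15 Eckardt divisors of type (a) restrict to the 15 Keel--Vermeire divisors. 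Thus $E|_{D_{A_1}} = 2B_2 + (\text{sum of the 15 Keel--Vermeire divisors})$, and using the class of the Keel--Vermeire divisor on $\oM_{0,6}$, whose fifteen-fold sum works out to $3B_2+3B_3$, gives $E|_{D_{A_1}} = 5B_2 + 3B_3$. Matching coefficients against $\alpha \tfrac{4B_2-3B_3}{5} + \beta B_3$ forces $(\alpha,\beta) = (\tfrac{25}{4},\tfrac{27}{4})$, consistent with $3\alpha - \beta = 12$.

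The main obstacles are the two places where genuine external input enters: establishing in part (1) that $B_{A_1},B_{A_2^3}$ generate $\Pic(\oY)^{W(E_6)}$ \emph{integrally}, and computing in part (2) the class of the Keel--Vermeire divisors. The latter is delicate precisely because these are not boundary classes, so they are invisible to the stratification even though their total sum happens to equal the boundary class $3B_2+3B_3$. Both inputs are supplied by \cite{colomboChowGroupModuli2004} (with the Keel--Vermeire divisor originating in \cite{vermeireCounterexampleFultonConjecture2002}); everything else is the bookkeeping of restricting invariant classes to $\oM_{0,6}$ and $(\bP^1)^3$ and solving linear systems.
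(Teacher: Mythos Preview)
The paper provides no proof for this proposition---it is stated purely as a citation to \cite{colomboChowGroupModuli2004}, with no accompanying argument. Your proposal therefore goes well beyond the paper: you supply a self-contained derivation via restriction to the boundary divisors $D_{A_1}\cong\oM_{0,6}$ and $D_{A_2^3}\cong(\bP^1)^3$ together with adjunction, and the computations you outline are correct. The two external inputs you flag---that $B_{A_1},B_{A_2^3}$ generate the invariant Picard lattice \emph{integrally}, and the class $3B_2+3B_3$ for the sum of the fifteen Keel--Vermeire divisors---are exactly the substantive content that must be drawn from \cite{colomboChowGroupModuli2004}; the rest is, as you say, linear-algebraic bookkeeping. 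One minor remark: your $(\bP^1)^3$ consistency check in part~(3) yields only the single relation $3\alpha-\beta=-1$ (since all classes there are multiples of $h_1+h_2+h_3$), so it cannot independently determine both coefficients; but as a sanity check on the values already pinned down by the $\oM_{0,6}$ restriction it is perfectly adequate.
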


Recall from the introduction that we label strata of $\oY$ by juxtaposition, so for instance a stratum of type
$2A_1A_2^3$ is a curve formed by the intersection of 2 $A_1$ and 1 $A_2^3$ divisors.

\begin{proposition} \label{prop:int_nums_Y}
    The 1-dimensional boundary strata of $\oY$ are of types $3A_1$ and $2A_1A_2^3$. The intersection numbers of
    $W(E_6)$-invariant boundary divisors on $\oY(E_6)$ with curves of these types are given in the following table.
    \begin{table}[htpb]
        \centering
        \caption{Intersection numbers on $\oY(E_6)$.}
        \label{tab:int_nums_Y}
        \begin{tabular}{| c || c | c |}
            \hline
                  & $3A_1$ & $2A_1A_2^3$ \\
                  \hline\hline
            $B_{A_1}$ & $-2$ & $3$ \\
            $B_{A_2^3}$ & $2$ & $-1$ \\
            \hline
        \end{tabular}
    \end{table}
\end{proposition}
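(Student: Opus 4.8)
The plan is to classify the one-dimensional boundary strata combinatorially, and then to compute each of the four intersection numbers by restricting the relevant curve $C$ to a boundary divisor containing it and working inside the explicit geometry of $\oM_{0,6}$ or $(\bP^1)^3$ supplied by \cref{prop:A1_naruki,prop:A23_naruki}. Since $\oY$ is a smooth fourfold with simple normal crossings boundary, a one-dimensional boundary stratum is the transverse intersection of exactly three boundary divisors, and three boundary divisors meet precisely when their root subsystems are pairwise orthogonal or nested. Running through the two divisor types: distinct $A_2^3$ divisors are disjoint by \cref{prop:A23_naruki}, and no $A_2^3$ is orthogonal to any $A_1$ (since $A_2^3$ already spans the full rank $6$ of $E_6$, its orthogonal complement contains no root). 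Hence the only triples that can meet are three pairwise orthogonal $A_1$'s (type $3A_1$) and two orthogonal $A_1$'s nested in a common $A_2^3$ (type $2A_1A_2^3$), which is the first assertion.

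For a curve $C$ of type $3A_1$, I would restrict to one of the three $A_1$ divisors $D \cong \oM_{0,6}$ containing it. By \cref{prop:A1_naruki} the other two $A_1$'s restrict to disjoint $2$-element boundary divisors $D_{ij}, D_{kl}$, so $C = D_{ij} \cap D_{kl}$ is the F-curve whose moving $\oM_{0,4}$ has legs $\{i,j\}, \{k,l\}, \{m\}, \{n\}$, where $\{m,n\}$ are the remaining markings. The standard intersection rules for boundary divisors of $\oM_{0,n}$ against such a curve give $B_2 \cdot C = (-1)+(-1)+1 = -1$ (from $D_{ij}, D_{kl}, D_{mn}$) and $B_3 \cdot C = 1+1 = 2$ (from $D_{mij}=D_{nkl}$ and $D_{mkl}=D_{nij}$). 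On the other hand, \cref{prop:A1_naruki} identifies the restrictions of the invariant divisors: the $15$ orthogonal $A_1$'s restrict to the $15$ divisors $D_{ij}$, summing to $B_2$, the remaining $A_1$'s miss $D$, and with the self-intersection $D|_D = (-B_2-3B_3)/5$ this gives $B_{A_1}|_D = (4B_2-3B_3)/5$; likewise the $10$ divisors $A_2^3 \supset A_1$ restrict to the $10$ divisors $D_{ijk}$, so $B_{A_2^3}|_D = B_3$. Combining yields $B_{A_1}\cdot C = \tfrac{1}{5}(4(-1)-3(2)) = -2$ and $B_{A_2^3}\cdot C = 2$.

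For a curve $C$ of type $2A_1A_2^3$, I would instead restrict to the containing $A_2^3$ divisor $D \cong (\bP^1)^3$. By \cref{prop:A23_naruki} the two orthogonal $A_1$'s lie in distinct $A_2$-factors, so $C = \{p\}\times\{q\}\times\bP^1$ is a coordinate line; the nine $A_1$'s restrict to divisors of classes $h_1,h_2,h_3$ (three each), giving $B_{A_1}|_D = 3(h_1+h_2+h_3)$, while the self-intersection $D|_D = -h_1-h_2-h_3$ together with the disjointness of distinct $A_2^3$'s gives $B_{A_2^3}|_D = -h_1-h_2-h_3$. Since $h_3\cdot C = 1$ and $h_1\cdot C = h_2\cdot C = 0$ on $(\bP^1)^3$, this yields $B_{A_1}\cdot C = 3$ and $B_{A_2^3}\cdot C = -1$.

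The only genuinely delicate step is the $\oM_{0,6}$ computation for type $3A_1$: one must correctly fold the self-intersection term $D|_D$ into the restriction of $B_{A_1}$, and apply the F-curve formula with the right signs and the identification $D_{ijk} = D_{(ijk)^c}$ so as not to double-count the contributing $B_3$ divisors. The $(\bP^1)^3$ computation for type $2A_1A_2^3$ is elementary by comparison, so I expect the whole proposition to reduce to these two short restriction-and-intersection calculations.
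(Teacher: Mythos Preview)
Your proposal is correct and follows essentially the same strategy as the paper: restrict the invariant divisors to a boundary divisor containing the curve and compute there using \cref{prop:A1_naruki,prop:A23_naruki}. The only difference is that for the $2A_1A_2^3$ curve the paper stays inside an $A_1$ divisor $\cong \oM_{0,6}$ (identifying $C$ as the intersection of a $D_2$ and a $D_3$ divisor and again applying the F-curve formulas), whereas you pass to the $A_2^3$ divisor $\cong (\bP^1)^3$; your route is arguably cleaner there since the intersection theory on $(\bP^1)^3$ is more elementary, but both are short and equivalent.
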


\begin{proof}
    Let $D$ be an irreducible boundary divisor of type $A_1$. Using \cref{prop:A1_naruki}, we compute that
    \begin{align*}
        B_{A_1}\vert_{D} = \frac{4B_2 - 3B_3}{5} \;\; \text{ and } \;\; B_{A_2^3}\vert_{D} = B_3.
    \end{align*}
    The $3A_1$ curves contained in $D \cong \oM_{0,6}$ are precisely the intersections of two $D_2$ divisors on
    $\oM_{0,6}$; likewise, the $2A_1A_2^3$ curves contained in $D \cong \oM_{0,6}$ are precisely the intersections of
    one $D_2$ and one $D_3$ divisor on $\oM_{0,6}$.  The result now follows by standard intersection-theoretic
    computations on $\oM_{0,6}$, see \cite[Corollary 2.6]{moonMoriProgramOverline2015}.
\end{proof}

\begin{lemma} \label{lem:Zdiv}
    Let $D$ be a $\bQ$-divisor on $\oY$, and suppose that $D \cdot C \in \bZ$ for each 1-dimensional boundary stratum
    $C$ of $\oY$. Then $D$ is actually a $\bZ$-divisor on $\oY$.
\end{lemma}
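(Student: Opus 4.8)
The plan is to recast the statement as a duality fact about the dual lattices $N^1(\oY) = \Pic(\oY)$ and $N_1(\oY)$, and then to reduce it to the assertion that the $1$-dimensional boundary strata generate $N_1(\oY)$ over $\bZ$. First I would record that, since linear, numerical, and homological equivalence all coincide on $\oY$ and the relevant groups are torsion-free (as recalled from \cite{colomboChowGroupModuli2004} and \cite{schockQuasilinearTropicalCompactifications2021}), the intersection pairing $N^1(\oY) \times N_1(\oY) \to \bZ$ is a perfect pairing of finitely generated free abelian groups, giving an identification $N_1(\oY) \cong \Hom(\Pic(\oY),\bZ)$. Under this identification, an element $D \in \Pic(\oY)_{\bQ}$ is an integral divisor if and only if it pairs integrally with every class of $N_1(\oY)$.

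Granting this, the hypothesis $D \cdot C \in \bZ$ for all $1$-dimensional boundary strata $C$ says exactly that $D$ pairs integrally with the subgroup $S \subseteq N_1(\oY)$ generated by the classes of these strata. Dualizing, the lemma becomes equivalent to the statement that $S = N_1(\oY)$, i.e. that the $3A_1$ and $2A_1A_2^3$ curves generate $N_1(\oY)$ as a $\bZ$-module, and not merely a finite index subgroup. I would prove this using the explicit intersection data of \cref{prop:A1_naruki,prop:A23_naruki,prop:eckardt_naruki} together with the presentation of the Chow ring of $\oY$, by exhibiting a collection of such curves whose intersection matrix against a $\bZ$-basis of $\Pic(\oY)$ is unimodular, so that the inclusion $S \subseteq N_1(\oY)$ has index $1$. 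A convenient way to organize the bookkeeping is to note that every $1$-stratum lies on some $A_1$ divisor $D_{A_1} \cong \oM_{0,6}$, where (as in the proof of \cref{prop:int_nums_Y}) the $3A_1$ and $2A_1A_2^3$ curves appear as the $D_2 \cap D_2$ and $D_2 \cap D_3$ strata; one establishes the corresponding generation statement on $\oM_{0,6}$ and then propagates it across the $36$ $A_1$ divisors, using the relations among boundary and Eckardt classes to account for the full rank of $N_1(\oY)$.

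The main obstacle is precisely the integrality (index $1$), not the rational statement: the $1$-strata obviously span $N_1(\oY)_{\bQ}$, so the difficulty lies entirely in controlling denominators. This subtlety is already visible $W(E_6)$-invariantly, where the pairing matrix of \cref{prop:int_nums_Y} has determinant $-4$; the apparent factor of $4$ is reconciled by the fact that $E = \frac{1}{4}(25 B_{A_1} + 27 B_{A_2^3})$ is an honest integral class by \cref{prop:int_Y}, so $\Pic(\oY)^{W(E_6)}$ is genuinely finer than the naive sublattice $\bZ B_{A_1} + \bZ B_{A_2^3}$. Any complete proof must therefore confirm that the strata classes are fine enough to detect such classes dually, and the crux of the argument is the verification that no proper finite-index sublattice of $N_1(\oY)$ contains all the $1$-dimensional boundary strata.
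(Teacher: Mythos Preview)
Your reduction is exactly the paper's: both arguments come down to (i) the intersection pairing giving an isomorphism $A^1(\oY) \xrightarrow{\sim} \Hom(A_3(\oY),\bZ)$, and (ii) the $1$-dimensional boundary strata generating $A_3(\oY)$ integrally. The paper, however, does not attempt the direct verification you outline; it simply invokes \cite[Theorem~1.9]{schockQuasilinearTropicalCompactifications2021}, which already supplies both (i) and (ii) as part of a general structure theorem for the Chow ring of $\oY$. Once those are granted, the lemma is a two-line formal consequence (lift via surjectivity, conclude via injectivity), with no intersection-matrix bookkeeping needed.

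Your proposed route---building a unimodular pairing matrix by working on each $D_{A_1}\cong\oM_{0,6}$ and propagating across the $36$ copies---would in principle reprove the cited generation statement, but you have only sketched it, and the actual computation lives in the full (non-invariant) Picard group, whose rank is large. So as written your proposal identifies the correct crux (index $1$, not merely rational span) but defers the hard step to an unperformed calculation, whereas the paper sidesteps that calculation entirely by citation. Your aside about the $W(E_6)$-invariant picture is perceptive (and indeed \cref{rmk:int_divs} confirms that $\tfrac{1}{4}(B_{A_1}+3B_{A_2^3})$ is integral, so the invariant lattice is strictly finer than $\bZ B_{A_1}\oplus\bZ B_{A_2^3}$), but it is tangential to the lemma itself, which concerns arbitrary, not invariant, $\bQ$-divisors.
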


\begin{proof}
    By \cite[Theorem 1.9]{schockQuasilinearTropicalCompactifications2021}, the Kronecker duality map
    \[
        A^1(\oY) \to \Hom(A_1(\oY),\bZ), \;\; D \mapsto (C \mapsto D \cdot C)
    \]
    is an isomorphism. By the same theorem, $A_1(\oY)$ is generated by the 1-dimensional boundary strata of $\oY$. So if
    $D \in A^1(\oY)_{\bQ}$ and $D \cdot C \in \bZ$ for each 1-dimensional boundary stratum $C$ of $\oY$, then $D$
    defines an element of $\Hom(A_1(\oY),\bZ)$, which lifts to an element $D' \in A^1(\oY)$ by surjectivity of the
    Kronecker duality map. Injectivity then implies that $D$ is linearly equivalent to $D'$.
\end{proof}

\section{$W(E_6)$-invariant birational geometry of $\oY$} \label{sec:bir_naruki}

The goal of this section is to study the $W(E_6)$-invariant birational geometry of Naruki's compactification $\oY$. In
\cref{sec:eff_Y} we describe the cones of $W(E_6)$-invariant effective divisors and curves, in \cref{sec:sbl_Y} we
describe the stable base locus decomposition of the $W(E_6)$-invariant effective cone of divisors, and in
\cref{sec:models_Y} we describe the $W(E_6)$-invariant birational models of $\oY$ appearing in this stable base locus
decomposition.

\subsection{The cones of $W(E_6)$-invariant effective divisors and curves} \label{sec:eff_Y}

In this subsection we describe the $W(E_6)$-invariant cones of effective divisors and curves of $\oY$, and as a
consequence also describe the nef cone of $\oY$.

\begin{theorem} \label{thm:eff_Y}
    The $W(E_6)$-invariant effective cone of $\oY(E_6)$ is the closed cone spanned by $B_{A_1}$ and $B_{A_2^3}$.
\end{theorem}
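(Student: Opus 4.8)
The containment $\supseteq$ is immediate: $B_{A_1}$ and $B_{A_2^3}$ are effective, being sums of prime divisors, so the cone they span lies in the effective cone, and by \cref{prop:int_Y} they form a basis of $\Pic(\oY)^{W(E_6)}_{\bQ}$, so the effective cone is a full-dimensional subcone of this plane. The content is the reverse containment, and my plan is to reduce it to a statement about $\oM_{0,6}$. Given an effective $W(E_6)$-invariant $\bQ$-divisor $D$, I would first peel off the boundary: since the $A_1$ divisors form one $W(E_6)$-orbit and the $A_2^3$ divisors another, I can write $D = \alpha B_{A_1} + \beta B_{A_2^3} + D''$, where $\alpha,\beta \geq 0$ are the multiplicities of $D$ along the boundary and $D''$ is effective, $W(E_6)$-invariant, and contains no boundary divisor in its support. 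Writing $[D''] = a'' B_{A_1} + b'' B_{A_2^3}$, it then suffices to prove $a'', b'' \geq 0$, since then $[D] = (\alpha + a'')B_{A_1} + (\beta + b'')B_{A_2^3}$ has nonnegative coordinates.

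Next I would restrict $D''$ to a single $A_1$ divisor $D_{A_1} \cong \oM_{0,6}$. Because $D''$ has no boundary component, $D_{A_1} \not\subseteq \operatorname{supp}(D'')$, so $D''|_{D_{A_1}}$ is an effective divisor on $\oM_{0,6}$. Using the self-restriction $D_{A_1}|_{D_{A_1}} = (-B_2 - 3B_3)/5$ of \cref{prop:A1_naruki}, together with the fact recorded there that the other $A_1$ divisors cut out $B_2$ and the $A_2^3$ divisors cut out $B_3$, one computes (exactly as in the proof of \cref{prop:int_nums_Y}) that $B_{A_1}|_{D_{A_1}} = (4B_2 - 3B_3)/5$ and $B_{A_2^3}|_{D_{A_1}} = B_3$. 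Hence $D''|_{D_{A_1}}$ is an effective class of the form $\tfrac{4a''}{5} B_2 + \bigl(b'' - \tfrac{3a''}{5}\bigr) B_3$, and it is moreover $S_6$-invariant, since the stabilizer of $D_{A_1}$ in $W(E_6)$ acts on $\oM_{0,6}$ through $S_6$.

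The key step is then to identify the $S_6$-invariant effective cone of $\oM_{0,6}$ as the cone $\langle B_2, B_3 \rangle$. By the theorem of Hassett--Tschinkel the extremal rays of the full effective cone of $\oM_{0,6}$ are generated by the boundary divisors and the Keel--Vermeire divisors; averaging over $S_6$ shows that the invariant effective cone is generated by the orbit sums $B_2$, $B_3$, and the sum $B_{KV}$ of the $15$ Keel--Vermeire divisors. It therefore remains to check that $B_{KV}$ lies in $\langle B_2, B_3 \rangle$, and I would do this by restricting the sum of Eckardt divisors to $D_{A_1}$. On the one hand, \cref{prop:A1_naruki}(3) shows that $15$ Eckardt divisors restrict to the $15$ Keel--Vermeire divisors and the remaining $30$ restrict to the $B_2$ divisors, each $D_{ij}$ occurring twice, so that $E|_{D_{A_1}} = B_{KV} + 2B_2$. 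On the other hand, using $E = (25 B_{A_1} + 27 B_{A_2^3})/4$ from \cref{prop:int_Y} and the restriction formulas above gives $E|_{D_{A_1}} = 5B_2 + 3B_3$. Comparing the two expressions yields $B_{KV} = 3B_2 + 3B_3$, a nonnegative combination of $B_2$ and $B_3$, as needed.

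Granting this, effectivity of $\tfrac{4a''}{5}B_2 + (b'' - \tfrac{3a''}{5})B_3$ forces $\tfrac{4a''}{5} \geq 0$ and $b'' - \tfrac{3a''}{5} \geq 0$, whence $a'' \geq 0$ and $b'' \geq \tfrac{3a''}{5} \geq 0$, completing the proof. The step I expect to be the main obstacle is the determination of the $S_6$-invariant effective cone of $\oM_{0,6}$, and in particular the control of the Keel--Vermeire class; the reduction to $\oM_{0,6}$ and the restriction computations are then routine bookkeeping with \cref{prop:A1_naruki}. As an independent consistency check, restricting $D''$ instead to an $A_2^3$ divisor $\cong (\bP^1)^3$ gives, via \cref{prop:A23_naruki}, the effective class $(3a'' - b'')(h_1 + h_2 + h_3)$ on $(\bP^1)^3$, recovering the inequality $3a'' \geq b''$.
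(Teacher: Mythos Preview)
Your proof is correct and follows the same strategy as the paper's: subtract off any boundary components, restrict to an $A_1$ divisor $\cong \oM_{0,6}$, compute $B_{A_1}\vert_{D_{A_1}} = (4B_2-3B_3)/5$ and $B_{A_2^3}\vert_{D_{A_1}} = B_3$, and conclude from the structure of the $S_6$-invariant effective cone of $\oM_{0,6}$.

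The only difference is in how you justify that last fact. The paper simply cites Keel--McKernan's result that the $S_n$-invariant effective cone of $\oM_{0,n}$ is generated by the symmetric boundary divisors. You instead invoke the stronger Hassett--Tschinkel description of the full effective cone of $\oM_{0,6}$ (boundary plus Keel--Vermeire divisors) and then eliminate the Keel--Vermeire contribution by computing $B_{KV} = 3B_2 + 3B_3$ via the Eckardt restriction. This detour is unnecessary for the theorem at hand, since Keel--McKernan's symmetric statement is both earlier and immediately applicable, but your argument is valid and has the pleasant byproduct of giving an explicit boundary expression for the symmetrized Keel--Vermeire class. Your consistency check on the $A_2^3$ divisor is also sound, though again not needed: the $\oM_{0,6}$ restriction alone already gives both $a'' \geq 0$ and $b'' \geq 0$.
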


\begin{proof}
    Note that $B_{A_1}$ and $B_{A_2^3}$ themselves are effective divisors. Let
    \[
        B = c_{A_1}B_{A_1} + c_{A_2^3}B_{A_2^3}.
    \]
    be a $W(E_6)$-invariant effective divisor on $\oY(E_6)$. (Any such divisor has this form by \cref{prop:int_Y}.) We
    wish to show $c_{A_1},c_{A_2^3} \geq 0$. By subtracting the fixed components of $B$, we can assume that $B$ does not
    contain any irreducible boundary divisor, so that the restriction of $B$ to any irreducible boundary divisor is
    effective. By \cref{prop:A1_naruki}, we have
    \begin{align*}
        B\vert_{D_{A_1}} = \frac{4c_{A_1}B_2 + (5c_{A_2^3}-3c_{A_1})B_3}{5}.
    \end{align*}
    Since $D_{A_1} \cong \oM_{0,6}$ has symmetric effective cone generated by its symmetric boundary divisors
    \cite{keelContractibleExtremalRays1996}, it follows that $c_{A_1},c_{A_2^3} \geq 0$.
\end{proof}

\begin{theorem} \label{thm:mori_Y}
    The $W(E_6)$-invariant cone of curves of $\oY(E_6)$ is the closed cone spanned by the sum $B_{3A_1}$ of the curves
    of type $3A_1$, and the sum $B_{2A_1A_2^3}$ of the curves of type $2A_1A_2^3$.
\end{theorem}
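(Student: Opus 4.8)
The effective cone of divisors was just computed (\cref{thm:eff_Y}), so by duality the strategy for the cone of curves is to find the extremal rays, and the natural candidates are the 1-dimensional boundary strata, which by \cref{prop:int_nums_Y} come in exactly two $W(E_6)$-invariant types, $3A_1$ and $2A_1A_2^3$. The plan is to show (i) that $B_{3A_1}$ and $B_{2A_1A_2^3}$ are genuinely extremal in the $W(E_6)$-invariant cone of curves, and (ii) that every $W(E_6)$-invariant effective curve class lies in the cone they span. Since $\Pic(\oY)^{W(E_6)} \cong \bZ^2$ by \cref{prop:int_Y}, the invariant curve classes also live in a rank-2 space (numerical and linear equivalence coincide on $\oY$), so the cone of curves is a 2-dimensional cone and it suffices to exhibit two extremal rays and check that no invariant effective class escapes the cone they bound.

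First I would set up the pairing. Using the intersection table in \cref{prop:int_nums_Y}, the classes of $B_{3A_1}$ and $B_{2A_1A_2^3}$ are determined by their pairings against the generators $B_{A_1}$, $B_{A_2^3}$ of $\Pic(\oY)^{W(E_6)}$: the first column of \cref{tab:int_nums_Y} gives $B_{3A_1} \cdot B_{A_1} = -2$, $B_{3A_1}\cdot B_{A_2^3} = 2$, and the second gives $B_{2A_1A_2^3}\cdot B_{A_1} = 3$, $B_{2A_1A_2^3}\cdot B_{A_2^3} = -1$. Since the $2\times 2$ matrix $\begin{pmatrix} -2 & 3 \\ 2 & -1\end{pmatrix}$ is nonsingular, the two curve classes are linearly independent and hence span the invariant numerical space $N_1(\oY)^{W(E_6)}_{\bQ} \cong \bQ^2$. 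This already shows the two candidate rays are distinct and extremal as rays of the $2$-dimensional cone they generate; the real content is the containment in step (ii).

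For the containment, the cleanest route is duality against the known effective cone of divisors. By \cref{thm:eff_Y}, a divisor class is pseudoeffective iff it lies in the cone spanned by $B_{A_1}$ and $B_{A_2^3}$; dually, the cone of curves generated by $B_{3A_1}$ and $B_{2A_1A_2^3}$ is cut out by the inequalities coming from pairing against the edges of the effective cone. Concretely, I would verify directly that every boundary stratum curve, pulled back or restricted to the relevant $\oM_{0,6}$ and $(\bP^1)^3$ strata, is a nonnegative combination of the two generating classes; the restriction computation from the proof of \cref{prop:int_nums_Y} (where $B_{A_1}|_{D_{A_1}} = (4B_2 - 3B_3)/5$ and $B_{A_2^3}|_{D_{A_1}} = B_3$) shows the $3A_1$ and $2A_1A_2^3$ curves arise as intersections of boundary divisors on $\oM_{0,6}$, and by Keel's description \cite{keelContractibleExtremalRays1996} the $S_n$-invariant Mori cone of $\oM_{0,6}$ is generated by its boundary curve classes. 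Pushing these generators into $\oY$ and intersecting against $B_{A_1}$, $B_{A_2^3}$ should express every invariant effective curve as a nonnegative combination of the two generators.

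\textbf{Main obstacle.} The delicate point is step (ii): showing that an arbitrary $W(E_6)$-invariant effective curve cannot leave the cone spanned by the two boundary types. Extremality of the two rays is nearly formal once independence is checked, but there is no a priori reason an invariant effective class must be a combination of \emph{boundary} curves — this is precisely the kind of statement (an invariant F-conjecture) that is hard in general. The way around it is to observe that, because the invariant numerical space is only 2-dimensional, it suffices to produce, for each of the two candidate extremal rays, a nef divisor that is zero on that ray and strictly positive on the other; these nef supporting divisors are exactly the two generators $B_{A_1}+3B_{A_2^3}$ and $B_{A_1}+B_{A_2^3}$ of \cref{thm:nef_Y_intro}, whose nonnegativity against both curve types can be read off from \cref{tab:int_nums_Y}. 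The genuine work is thus establishing that these two divisors are actually nef (not merely nonnegative on boundary curves), which feeds back into the semi-ampleness statements of the later theorems; for the present statement I would instead argue by pure duality, noting that a curve class pairing nonnegatively with every effective divisor — equivalently, nonnegatively with $B_{A_1}$ and $B_{A_2^3}$, the edges of the effective cone — automatically lies in the dual cone, which is precisely the cone spanned by $B_{3A_1}$ and $B_{2A_1A_2^3}$ by the sign pattern in \cref{tab:int_nums_Y}.
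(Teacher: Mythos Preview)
Your final ``pure duality'' argument contains a genuine error. You claim that an effective curve class pairs nonnegatively with every effective divisor, and that the dual of $\eff(\oY)^{W(E_6)}$ is the cone spanned by $B_{3A_1}$ and $B_{2A_1A_2^3}$. Both assertions are false. An effective curve pairs nonnegatively with every \emph{nef} divisor, not every effective one; indeed $B_{3A_1}\cdot B_{A_1}=-2<0$ from your own table. And the dual of the cone $\langle B_{A_1},B_{A_2^3}\rangle$ is the cone of movable curves, which here is strictly smaller than the Mori cone (neither $B_{3A_1}$ nor $B_{2A_1A_2^3}$ lies in it). So duality against the effective cone gives no containment of the Mori cone in $\langle B_{3A_1},B_{2A_1A_2^3}\rangle$. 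You correctly identified that the nef-divisor approach is circular (since \cref{cor:nef_Y} is deduced \emph{from} the present theorem), but the substitute you propose does not work.

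The missing idea is the one you brush past: the paper invokes \cite[Corollary 2.3]{keelContractibleExtremalRays1996}, which says that once the (invariant) effective cone of divisors is generated by the boundary components, every extremal ray of the (invariant) cone of curves is represented by a curve lying \emph{in} the boundary. This is a nontrivial input and is exactly what bridges the gap you flagged as the ``main obstacle.'' With that in hand, one is reduced to the cones of curves of the boundary divisors $D_{A_1}\cong\oM_{0,6}$ and $D_{A_2^3}\cong(\bP^1)^3$, whose symmetric cones of curves are known to be generated by their boundary curves; these are precisely the $3A_1$ and $2A_1A_2^3$ curves. You gestured at this step when discussing Keel's result for $\oM_{0,6}$, but you applied it only inside the boundary divisors and never supplied the reduction from $\oY$ to its boundary.
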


\begin{proof}
    It follows by \cref{thm:eff_Y} and \cite[Corollary 2.3]{keelContractibleExtremalRays1996} that the
    $W(E_6)$-invariant cone of curves of $\oY(E_6)$ is generated by curves contained in the boundary. Since the curves
    of types $3A_1$ and $2A_1A_2^3$ generate the symmetric effective cones of $\oM_{0,6}$ and $(\bP^1)^3$ (cf.
    \cite{keelContractibleExtremalRays1996}), the result follows.
\end{proof}

\begin{corollary} \label{cor:nef_Y}
    The $W(E_6)$-invariant nef cone of $\oY(E_6)$ is spanned by $B_{A_1}+3B_{A_2^3}$ and $B_{A_1}+B_{A_2^3}$.
\end{corollary}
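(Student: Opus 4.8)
The plan is to use the standard duality between the cone of curves and the nef cone: a divisor is nef if and only if it intersects every curve in the Mori cone nonnegatively, and since by \cref{thm:mori_Y} the $W(E_6)$-invariant cone of curves is generated by the two curve classes $B_{3A_1}$ and $B_{2A_1A_2^3}$, the $W(E_6)$-invariant nef cone is precisely the dual cone cut out by the two inequalities $D \cdot B_{3A_1} \geq 0$ and $D \cdot B_{2A_1A_2^3} \geq 0$. First I would write a general invariant divisor as $D = \alpha B_{A_1} + \beta B_{A_2^3}$, which is legitimate because $\Pic(\oY)^{W(E_6)} \cong \bZ^2$ is generated by $B_{A_1}$ and $B_{A_2^3}$ by \cref{prop:int_Y}(1).

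Next I would compute the two pairings using the intersection table in \cref{prop:int_nums_Y}. We have
\begin{align*}
    D \cdot B_{3A_1} &= \alpha(-2) + \beta(2) = 2(\beta - \alpha), \\
    D \cdot B_{2A_1A_2^3} &= \alpha(3) + \beta(-1) = 3\alpha - \beta.
\end{align*}
The nef condition is therefore $\beta \geq \alpha$ and $\beta \leq 3\alpha$, i.e. $\alpha \leq \beta \leq 3\alpha$ (which also forces $\alpha \geq 0$). I would then identify the two boundary rays of this two-dimensional cone: the ray $\beta = 3\alpha$ corresponds to $\alpha B_{A_1} + 3\alpha B_{A_2^3}$, a positive multiple of $B_{A_1} + 3B_{A_2^3}$, and the ray $\beta = \alpha$ corresponds to a positive multiple of $B_{A_1} + B_{A_2^3}$. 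Hence the nef cone is spanned by $B_{A_1} + 3B_{A_2^3}$ and $B_{A_1} + B_{A_2^3}$, as claimed.

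There is essentially no serious obstacle here, since this is a routine dualization of the already-established Mori cone; the only point requiring a word of care is the justification that duality with respect to the full cone of curves restricts correctly to the $W(E_6)$-invariant setting. This follows because a $W(E_6)$-invariant divisor intersects every $W(E_6)$-orbit of curves with the same number, so its nonnegativity against the invariant sums $B_{3A_1}$ and $B_{2A_1A_2^3}$ is equivalent to nonnegativity against each individual curve in those orbits — and by \cref{thm:mori_Y} these orbits generate the entire cone of curves. Thus an invariant divisor is nef in the usual sense exactly when it meets these two invariant curve classes nonnegatively, and the corollary follows immediately from the linear-algebra computation above.
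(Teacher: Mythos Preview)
Your proposal is correct and follows essentially the same approach as the paper: the paper's proof simply states that the nef cone is dual to the cone of curves and invokes \cref{thm:mori_Y} and \cref{prop:int_nums_Y} for the direct computation, which is exactly the dualization and intersection-number calculation you carry out. Your additional remark justifying that invariant duality suffices is a welcome clarification but not a departure from the paper's argument.
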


\begin{proof}
    The nef cone is the dual of the cone of curves under the intersection pairing, so this follows by a direct
    computation from \cref{thm:mori_Y,prop:int_nums_Y}.
\end{proof}

\begin{remark} \label{rmk:int_divs}
    The first lattice point of a ray in the effective cone is the smallest $\bQ$-divisor $D$ lying on that ray such that
    $D$ is actually a $\bZ$-divisor. It follows from \cref{lem:Zdiv,prop:int_nums_Y} that the first lattice points on
    the rays through $B_{A_1} + 3B_{A_2^3}$ and $B_{A_1}+B_{A_2^3}$ are $\frac{B_{A_1}+3B_{A_2^3}}{4} = K_{\oY} +
    \frac{1}{2}B$ and $\frac{B_{A_1}+B_{A_2^3}}{2} = \frac{1}{2}B$, where $B=B_{A_1}+B_{A_2^3}$ is the sum of all the
    boundary divisors.
\end{remark}

\subsection{Stable base locus decomposition} \label{sec:sbl_Y}

\begin{theorem} \label{thm:sbl_Y}
    Let $D$ be a $W(E_6)$-invariant effective divisor on $\oY$.
    \begin{enumerate}
        \item If $D \in [B_{A_1}+3B_{A_2^3}, B_{A_1}+B_{A_2^3}]$, then $D$ is nef and
            semi-ample. In particular, $\bB(D) = \emptyset$.
        \item If $D \in (B_{A_1}+3B_{A_2^3},B_{A_2^3}]$, then $\bB(D) = B_{A_2^3}$ is the sum of the $A_2^3$
            divisors.
        \item If $D \in (B_{A_1}+B_{A_2^3},5B_{A_1}+3B_{A_2^3}]$, then $\bB(D)=B_{2A_1}$ is the sum of
            the $2A_1$ surfaces.
        \item If $D \in (5B_{A_1}+3B_{A_2^3},B_{A_1}]$, then $\bB(D)=B_{A_1}$ is the sum of the $A_1$
            divisors.
    \end{enumerate}
\end{theorem}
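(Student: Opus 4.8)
The plan is to work throughout in the rank-two group $\Pic(\oY)^{W(E_6)} = \langle B_{A_1}, B_{A_2^3}\rangle$, writing $D = c_{A_1}B_{A_1} + c_{A_2^3}B_{A_2^3}$, and to combine two inputs. First, the nef cone is $[N_1, N_2]$, where $N_1 := B_{A_1}+3B_{A_2^3}$ and $N_2 := B_{A_1}+B_{A_2^3}$ (\cref{cor:nef_Y}). Second, the restrictions to boundary strata are explicit: on each $A_1$ divisor $D_{A_1} \cong \oM_{0,6}$ one has $B_{A_1}|_{D_{A_1}} = (4B_2-3B_3)/5$ and $B_{A_2^3}|_{D_{A_1}} = B_3$ (\cref{prop:A1_naruki}), while on each $A_2^3$ divisor $D_{A_2^3} \cong (\bP^1)^3$ one has $B_{A_1}|_{D_{A_2^3}} = 3(h_1+h_2+h_3)$ and $B_{A_2^3}|_{D_{A_2^3}} = -(h_1+h_2+h_3)$ (\cref{prop:A23_naruki}). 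The strategy is uniform: for lower bounds I restrict $D$ to a boundary stratum and detect base components through non-effectivity of the restriction, and for upper bounds I peel off a semi-ample summand and reduce to a divisor on an extremal ray.

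For part (1), since a nonnegative rational combination of semi-ample classes is semi-ample, it suffices to prove $N_1$ and $N_2$ are semi-ample. Both are nef and big, being interior to the pseudoeffective cone $\langle B_{A_1}, B_{A_2^3}\rangle$. Writing $B = B_{A_1}+B_{A_2^3}$ and using $K_{\oY} = (-B_{A_1}+B_{A_2^3})/4$ from \cref{prop:int_Y}, one computes $N_1 - (K_{\oY}+\tfrac12 B) = \tfrac34 N_1$ and $N_2 - (K_{\oY}+\tfrac12 B_{A_1}) = \tfrac34 N_2$, both nef and big, while $(\oY, \tfrac12 B)$ and $(\oY, \tfrac12 B_{A_1})$ are klt since the boundary is simple normal crossings with coefficient $\tfrac12$. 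The basepoint-free theorem then gives semi-ampleness of $N_1$ and $N_2$, hence of all of $[N_1, N_2]$.

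Chambers (2) and (4) follow quickly. For $D$ in (2), write $D = \lambda N_1 + \mu B_{A_2^3}$ with $\mu > 0$; for $D$ in (4), write $D = \lambda N_2 + \mu B_{A_1}$ with $\mu > 0$. Semi-ampleness of $N_1, N_2$ gives $\bB(D) \subseteq \bB(B_{A_2^3})$, resp. $\bB(D) \subseteq \bB(B_{A_1})$. Now $B_{A_2^3}|_{D_{A_2^3}} = -(h_1+h_2+h_3)$ is anti-effective, and $B_{A_1}|_{D_{A_1}} = (4B_2-3B_3)/5$ lies outside the $S_6$-invariant effective cone of $\oM_{0,6}$ (spanned by $B_2$ and the Keel--Vermeire class); hence every section of a multiple of $B_{A_2^3}$, resp. $B_{A_1}$, restricts to zero on each such divisor and so vanishes along it. Factoring out the defining section repeatedly shows $h^0(mB_{A_2^3}) = h^0(mB_{A_1}) = 1$, so $\bB(B_{A_2^3}) = B_{A_2^3}$ and $\bB(B_{A_1}) = B_{A_1}$, giving the upper bounds. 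The matching lower bounds use the same restrictions: in (2) the class $D|_{D_{A_2^3}} = (3c_{A_1}-c_{A_2^3})(h_1+h_2+h_3)$ is a negative multiple of $h_1+h_2+h_3$, and in (4) the class $D|_{D_{A_1}}$ has positive $B_2$- and negative $B_3$-coefficient; both are non-effective, forcing $B_{A_2^3}$, resp. $B_{A_1}$, into $\bB(D)$.

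Chamber (3) is the crux. Peeling $N_2$ reduces the upper bound to $\bB(L) \subseteq B_{2A_1}$ for $L := 5B_{A_1}+3B_{A_2^3} = N_1 + 4B_{A_1}$. The restrictions are favorable: $L|_{D_{A_2^3}} = 12(h_1+h_2+h_3)$ is ample, and $L|_{D_{A_1}} = 4B_2$ has base locus exactly the $B_2$-divisors, which are precisely $B_{2A_1} \cap D_{A_1}$. The main obstacle is converting these into a bound on the \emph{global} base locus, i.e. producing sections of $mL$ nonvanishing off $B_{2A_1}$. I expect the cleanest route to be relative: $L$ is $\pi$-ample for the morphism $\pi : \oY \to \oM$ defined by the semi-ample $N_1$ (it is $\pi$-trivial plus the $\pi$-ample $B_{A_1}$, which is ample on the contracted fibers $(\bP^1)^3$), so $mL$ is relatively globally generated over $\oM$; combined with the ampleness of $L|_{D_{A_2^3}}$ and the base locus of $L|_{D_{A_1}} = 4B_2$, this should yield $\bB(L) \subseteq B_{2A_1}$. (Alternatively one can invoke the explicit linear systems on $\oY$ of \cite{vangeemenLinearSystemNaruki2002}.) The matching lower bound is routine: $N_2$ restricts to a conic-bundle class on each $2A_1$ surface $S \cong \oM_{0,5}$, and the covering conic fibers $C \subseteq S$ satisfy $N_2 \cdot C = 0$ while $D \cdot C < 0$ in (3) (since $D \cdot B_{3A_1} = 2(c_{A_2^3}-c_{A_1}) < 0$ by \cref{prop:int_nums_Y} and $C$ is proportional to $B_{3A_1}$ against invariant classes); hence $D|_S$ is non-effective and $S \subseteq \bB(D)$. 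Assembling the four chambers gives the stated decomposition.
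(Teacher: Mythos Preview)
Your arguments for parts (1), (2), (4), and the lower bound in (3) are correct and largely parallel the paper's (with minor variations: the paper uses the covering curves $C_{2A_1A_2^3}$ for the lower bound in (2) rather than restriction to $D_{A_2^3}$). One small slip: the $S_6$-invariant effective cone of $\oM_{0,6}$ is spanned by $B_2$ and $B_3$, not by $B_2$ and a symmetrized Keel--Vermeire class; your conclusion there is unaffected since a class with negative $B_3$-coefficient is still not effective.

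The genuine gap is the upper bound in (3), which you yourself flag as ``the main obstacle.'' Your relative-ampleness sketch does not close it: $\pi$-ampleness of $L$ over $\oM$ gives surjectivity of $\pi^*\pi_*\cO(mL) \to \cO(mL)$ for $m \gg 0$, but to bound the global base locus you still need global generation of $\pi_*\cO(mL)$ on $\oM$ away from $\pi(B_{2A_1})$, which is not automatic. Likewise, knowing the base loci of $L|_{D_{A_1}}$ and $L|_{D_{A_2^3}}$ says nothing about $\bB(L)$ without surjectivity of the restriction maps on global sections. The paper supplies exactly this missing ingredient: setting $\Delta = \tfrac14(5B_{A_1}+3B_{A_2^3})$, one has $\Delta|_{B_{A_1}} = B_2 = B_{2A_1}$, and the restriction map $H^0(\oY,\Delta) \to H^0(B_{A_1},\Delta|_{B_{A_1}})$ is shown to be surjective by proving $H^1(\oY,\Delta - B_{A_1}) = 0$. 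The key identity is
\[
\Delta - B_{A_1} \;=\; \tfrac14(B_{A_1}+3B_{A_2^3}) \;=\; K_{\oY} + \tfrac12 B,
\]
and since $\tfrac12 B$ is a nef and big \emph{integral} divisor with simple normal crossings support, Kawamata--Viehweg vanishing applies. Surjectivity then lifts the section of $\Delta|_{B_{A_1}}$ vanishing exactly on $B_{2A_1}$ to a global section, giving $\bB(\Delta) \subset B_{2A_1}$ once combined with $\bB(\Delta) \subset B_{A_1}$ (which you already have from $\Delta \in (N_2,B_{A_1}]$).
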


\begin{proof}
    \begin{enumerate}
        \item Let $\Delta_1 = \frac{1}{2}(B_{A_1}+B_{A_2^3})$ and $\Delta_2 = \frac{1}{2}B_{A_1}$, and for $i=1,2$ let
            $D_i = K_{\oY}+\Delta_i$. Then $D_1 = \frac{B_{A_1}+3B_{A_2^3}}{4}$ and $D_2 = \frac{B_{A_1}+B_{A_2^3}}{4}$ span the
            extremal rays of the $W(E_6)$-invariant nef cone of $\oY$ by \cref{cor:nef_Y}. Thus it suffices to show that
            $D_i$ is semi-ample for $i=1,2$. But $D_i$ is nef, and since it lies in the interior of the
            (pseudo)effective cone, $D_i$ is big \cite[Theorem 2.2.26]{lazarsfeldPositivityAlgebraicGeometry2004}.
            (Alternatively, one may directly compute, using \cite[Theorem 4.13]{colomboChowGroupModuli2004}, that
            $D_1^4=27$ and $D_2^4=12$ are both positive, hence $D_i$ is big by \cite[Theorem
            2.2.16]{lazarsfeldPositivityAlgebraicGeometry2004}.) Thus $2D_i-K-\Delta_i=D_i$ is nef and big. Since
            $(\oY,\Delta_i)$ is klt, it follows by the Basepoint-Free Theorem that $D_i$ is semi-ample. (In
            \cref{thm:models} below we will explicitly describe the morphisms given by $\lvert mD_i \rvert$ for $m \gg
            0$.)
        \item Let $D \in (B_{A_1}+3B_{A_2^3},B_{A_2^3}]$. Since $B_{A_1}+3B_{A_2^3}$ is semi-ample by part (1), we see
            that $\bB(D) \subset B_{A_2^3}$. Write
            \[
                D = \alpha B_{A_1} + (3\alpha + \beta)B_{A_2^3}
            \]
            where $\alpha \geq 0$, $\beta > 0$. If $C_{2A_1A_2^3}$ is a curve of type $2A_1A_2^3$, then by
            \cref{prop:int_nums_Y}, we have
            \begin{align*}
                D \cdot B_{2A_1A_2^3} &= 3\alpha - (3\alpha + \beta) \\
                                      &= -\beta < 0.
            \end{align*}
            Since the curves of type $2A_1A_2^3$ cover $B_{A_2^3}$, it follows that $B_{A_2^3} \subset \bB(D)$, hence
            $\bB(D) = B_{A_2^3}$.
        \item Suppose $D \in (B_{A_1}+B_{A_2^3},5B_{A_1}+3B_{A_2^3}]$. We first show that $B_{2A_1} \subset \bB(D)$. Let
            $S \cong \oM_{0,5} \cong Bl_4\bP^2$ be an irreducible $2A_1$ surface. Recall the correspondence between the
            boundary divisors of $\oM_{0,5}$ and $Bl_4\bP^2$: for $ij \subset [4]$, $D_{ij} = \ell_{ij} = h-e_i-e_j$ is
            the strict transform of the line through the $i$th and $j$th blown up points, and for $i \in [4]$,
            $D_{i5}=e_i$ is the exceptional divisor over the $i$th blown up point. The $3A_1$ curves on $S$ are
            precisely the divisors $D_{ij}=\ell_{ij}$. Viewing $S$ as $\oM_{0,5}$, let $C$ be the moving curve obtained
            by fixing 4 points on $\bP^1$ and varying the 5th point. Then $C \cdot D_{i5} = 1$ and $C \cdot D_{ij} = 0$.
            It follows that the class of $C$ is $2h-e_1-e_2-e_3-e_4 = \ell_{ij} + \ell_{kl}$, for any partition $ij
            \amalg kl = [4]$. By \cref{prop:int_nums_Y}, $D \cdot C_{3A_1} < 0$, hence $D \cdot C <  0$. Since $C$
            covers $S$, we conclude that $B_{2A_1} \subset \bB(D)$.

            Now we show that the stable base locus of $D$ is exactly $B_{2A_1}$. For this, note that since
            $B_{A_1}+B_{A_2^3}$ is semi-ample by part (1), $\bB(D)$ is contained in the stable base locus of the divisor
            $\Delta = \frac{5B_{A_1}+3B_{A_2^3}}{4}$ lying on the other ray of the cone
            $(B_{A_1}+B_{A_2^3},5B_{A_1}+3B_{A_2^3}]$.  Thus it suffices to show that $\bB(\Delta) \subset B_{2A_1}$.

            Since $\Delta \in (B_{A_1}+B_{A_2^3},B_{A_1}]$, and $B_{A_1}+B_{A_2^3}$ is semi-ample, it follows that
            $\bB(\Delta) \subset B_{A_1}$. If $D_{A_1}$ is an $A_1$ divisor on $\oY$, then by \cref{prop:A1_naruki},
            $\Delta\vert_{D_{A_1}}=B_2$ is the sum of the $D_2$ divisors on $D_{A_1} \cong \oM_{0,6}$; these are
            precisely the $2A_1$ surfaces on $\oY$ that are contained in $D_{A_1}$. Thus
            $\Delta\vert_{B_{A_1}}=B_{2A_1}$ is the sum of all the $2A_1$ surfaces on $\oY$. Now, since
            $\Delta\vert_{B_{A_1}} = B_{2A_1}$ is the fixed effective divisor $B_{2A_1}$ on $B_{A_1}$, we have that
            \[
                H^0(B_{A_1},\Delta\vert_{B_{A_1}}) \neq 0,
            \]
            and any nonzero section of $\Delta\vert_{B_{A_1}}$ vanishes exactly along $B_{2A_1}$. Thus the statement
            that $\bB(\Delta) \subset B_{2A_1}$ follows from the claim that the restriction map
            \[
                H^0(\oY,\Delta) \to H^0(B_{A_1},\Delta\vert_{B_{A_1}})
            \]
            is surjective. In order to prove this claim, note that by taking the long exact sequence of cohomology of
            the exact sequence
            \[
                0 \to \cO_{\oY}(\Delta-B_{A_1}) \to \cO_{\oY}(\Delta) \to \cO_{B_{A_1}}(\Delta\vert_{B_{A_1}}) \to 0,
            \]
            it suffices to prove that
            \[
                H^1(\oY,\Delta-B_{A_1}) = 0.
            \]
            For this, observe that
            \[
                \Delta - B_{A_1} = \frac{B_{A_1}+3B_{A_2^3}}{4} = K_{\oY} + \frac{1}{2}B,
            \]
            where $B = B_{A_1}+B_{A_2^3}$ is the sum of the boundary divisors of $\oY$. The $\bQ$-divisor $\frac{1}{2}B$
            is actually a nef and big integral divisor (cf. \cref{cor:nef_Y,rmk:int_divs}), and its support has simple
            normal crossings. Therefore, by Kawamata-Viehweg vanishing,
            \[
                H^1(\oY,\Delta-B_{A_1}) = H^1(\oY,K_{\oY} + \frac{1}{2}B) = 0.
            \]
            (See also \cite[Section 6.1]{colomboChowGroupModuli2004}.)
        \item Let $D \in (5B_{A_1}+3B_{A_2^3},B_{A_1}]$. Note that $D \in (B_{A_1}+B_{A_2^3},B_{A_1}]$, and
            $B_{A_1}+B_{A_2^3}$ is semi-ample by part (1), thus $\bB(D) \subset B_{A_1}$.
            \[
                D = (5\alpha + \beta)B_{A_1} + 3\alpha B_{A_2^3},
            \]
            where $\alpha \geq 0$, $\beta > 0$. By \cref{prop:A1_naruki}, the restriction of $D$ to a given $A_1$
            divisor $\cong \oM_{0,6}$ is
            \[
                \frac{(20\alpha + 4\beta)B_2 + (-3\beta)B_3}{5}.
            \]
            Since $-3\beta < 0$, this is not effective by \cite[Theorem 1.3(1)]{keelContractibleExtremalRays1996}. Thus
            $B_{A_1} \subset \bB(D)$.
    \end{enumerate}
\end{proof}

\subsection{$W(E_6)$-equivariant birational models of $\oY$} \label{sec:models_Y}

\begin{theorem} \label{thm:models}
    Let $D$ be a $W(E_6)$-invariant effective $\bQ$-divisor on $\oY$.
    \begin{enumerate}
        \item If $D \in (B_{A_1}+3B_{A_2^3},B_{A_1}+B_{A_2^3})$, then $D$ is ample, and $\oY(D) = \oY$.
        \item If $D \in [B_{A_1}+3B_{A_2^3},B_{A_2^3})$, then $\oY(D)$ is the GIT moduli space $\oM$ of marked cubic
            surfaces. The morphism $\pi : \oY \to \oM$ is given by contracting the 40 $A_2^3$ divisors of $\oY$ to
            singular points, each locally isomorphic to the cone over the Segre embedding of $(\bP^1)^3$.
        \item If $D$ is a multiple of $B_{A_1}+B_{A_2^3}$, then $D$ is semi-ample and the morphism $\phi : \oY \to
            \oY(D) =: \oW$ associated to $\lvert mD \rvert$ for $m \gg 0$ is given by contracting each $2A_1$ surface $S
            \cong Bl_4\bP^2$ to a singular line via the strict transform of the linear system of conics on $\bP^2$
            passing through the 4 blown up points.
        \item If $D \in (B_{A_1}+B_{A_2^3},B_{A_1})$, then $\oY(D)$ is the $D$-flip $\oX$ of the small contraction $\phi
            : \oY \to \oW$ of part (3).
        \item If $D$ is a multiple of $B_{A_1}$ or $B_{A_2^3}$, then $\oY(D)$ is a point.
    \end{enumerate}
\end{theorem}

\begin{proof}
    \begin{enumerate}
        \item This first part follows from \cref{cor:nef_Y}, since $(B_{A_1}+3B_{A_2^3},B_{A_1}+B_{A_2^3})$ is the
            interior of the nef cone.
        \item The described morphism $\pi : \oY \to \oM$ is explicitly constructed by Naruki in
            \cite{narukiCrossRatioVariety1982}, and Colombo and van Geemen show that \cite[Proposition
            2.7]{colomboChowGroupModuli2004}
            \[
                \pi^*\cO(1) = \cO\left(\frac{B_{A_1}+3B_{A_2^3}}{4}\right).
            \]
            Thus if $D'$ lies on the ray through $B_{A_1}+3B_{A_2^3}$, then $\lvert mD' \rvert$ for $m \gg 0$ gives the
            morphism $\pi$.

            In general, if $D \in (B_{A_1}+3B_{A_2^3},B_{A_2^3})$, then $\lvert mD \rvert$ for $m \gg 0$ has fixed part
            $B_{A_2^3}$ by \cref{thm:sbl_Y}. Therefore $H^0(\oY,mD) \cong H^0(\oY,mD-B_{A_2^3}) \cong H^0(\oY,mD')$, so
            $\oY(D) = \oY(D') = \oM$.
        \item Let $D = \frac{B_{A_1}+B_{A_2^3}}{2} = \frac{1}{2}B$. As shown in (the proof of) \cref{thm:sbl_Y}, $D$ is
            semi-ample, and ample outside of the sum $B_{2A_1}$ of the $2A_1$ surfaces. By \cref{prop:A1_naruki}, the
            restriction of $D$ to an $A_1$ divisor $\cong \oM_{0,6}$ is given by
            \[
                \frac{2B_2+B_3}{5}.
            \]
            It is shown in \cite{moonMoriProgramOverline2015} that a sufficiently large multiple of this divisor gives
            the contraction $\oM_{0,6} \to \cI_4$ to the Igusa quartic, obtained by contracting each irreducible $D_2$
            divisor $\cong Bl_4\bP^2$ via the strict transform of the linear system of conics through the 4 blown up
            points. Since the $D_2$ divisors on $\oM_{0,6}$ are precisely the $2A_1$ surfaces of $\oY$, the result
            follows.
        \item Note that
            \[
                K_{\oY}+\frac{1}{2}B_{A_1} = \frac{B_{A_1}+B_{A_2^3}}{4} \;\; \text{ and } \;\; K_{\oY} + \frac{2}{3}B_{A_1} =
                \frac{5B_{A_1}+3B_{A_2^3}}{12}.
            \]
            Thus if $D \in (B_{A_1}+B_{A_2^3},5B_{A_1}+3B_{A_2^3}]$, then $D$ is a multiple of $K_{\oY} + \alpha B_{A_1}$ for
            some $\alpha \in (1/2,2/3]$. Since $B_{A_1}$ is normal crossings, $(\oY,\Delta)$ is klt. Furthermore, $-D$
            is ample on the fibers of the small contraction $\phi : \oY \to \oW$ (since by the previous part these
            fibers are linear combinations of $3A_1$ curves). Thus $\oY(D)$ is the flip $\oX$ of $\phi$, which exists by
            \cite[Theorem 1.3.3]{cortiFlips3folds4folds2007}.

            If $D \in (5B_{A_1}+3B_{A_2^3},B_{A_1})$, then $\lvert mD \rvert$ for $m \gg 0$ has fixed part $B_{A_1}$ by
            \cref{thm:sbl_Y}. Therefore $H^0(\oY,mD) \cong H^0(\oY,mD-B_{A_1})$. We claim that $mD - B_{A_1}$ is a
            multiple of $5B_{A_1} + 3B_{A_2^3}$; note it follows from the claim and the previous statements that $\oY(D)
            \cong X$.

            To prove the claim, observe that since $D$ is a positive linear combination of $5B_{A_1} + 3B_{A_2^3}$ and
            $B_{A_1}$, we can divide through by the coefficient of $B_{A_1}$ to write $D$ as $(5\alpha + 1)B_{A_1} + 3
            \alpha B_{A_2^3}$, for some positive $\alpha \in \bQ$. Therefore,
            \begin{align*}
                mD - B_{A_1} &= (5m\alpha)B_{A_1} + 3m\alpha B_{A_2^3}.
            \end{align*}
            is a multiple of $5B_{A_1} + 3B_{A_2^3}$.
        \item If $D = B_{A_1}$ or $B_{A_2^3}$, then $D$ is a fixed divisor, so $\oY(D)$ is a point.
    \end{enumerate}
\end{proof}

\begin{remark}
    There are two birational models of $\oM_{0,6}$ appearing in the $S_6$-invariant minimal model program for
    $\oM_{0,6}$---the classically known Segre cubic and Igusa quartic threefolds, see
    \cite{moonMoriProgramOverline2015}. The Segre cubic is obtained by contracting the $D_3$ divisors on $\oM_{0,6}$ to
    singular points, and the Igusa quartic is obtained by contracting the $D_2$ divisors on $\oM_{0,6}$ to singular
    lines. The $W(E_6)$-invariant minimal model program for $\oY(E_6)$ contains the $S_6$-invariant minimal model
    program for $\oM_{0,6}$---the restriction of the morphism $\pi : \oY \to \oM$ of \cref{thm:models}(2) to an $A_1$
    divisor is the contraction of $\oM_{0,6}$ to the Segre cubic, and the restriction of the morphism $\phi : \oY \to
    \oW$ of \cref{thm:models}(3) to an $A_1$ divisor is the contraction of $\oM_{0,6}$ to the Igusa quartic.
\end{remark}

\begin{remark}
    We have been unsuccessful in explicitly constructing the flip $\oX \to \oW$ of the small contraction $\oY \to \oW$.
    Intuitively, since the small contraction $\oY \to \oW$ is given by contracting each $2A_1$ surface $S \cong
    Bl_4\bP^2$ to a line, $S \to \bP^1$, the natural guess for the flip is to blowup $S$ to $S \times \bP^1$, then
    contract it to $\bP^1 \times \bP^1$. However, since the $2A_1$ surfaces do not all intersect transversally, we
    cannot do this for all surfaces simultaneously. First blowing up all intersections of the $2A_1$ surfaces leads to
    the space $\wY$ studied below, and there does not appear to be an evident contraction of $\wY$ to the flip $\oX$
    either.
\end{remark}

\section{The moduli space of stable marked cubic surfaces} \label{sec:wY}

Recall that the moduli space of stable marked cubic surfaces is the blowup $\wY = \wY(E_6)$ of $\oY = \oY(E_6)$ along
all intersections of $A_1$ divisors, in increasing order of dimension \cite[Section 10]{hackingStablePairTropical2009}.
In this section we gather the necessary results concerning divisors and intersection theory on $\wY$, in preparation for
the study of the birational geometry of $\wY$ in the following sections.

\subsection{Divisors on $\wY(E_6)$} \label{sec:div_wY}

There are 6 types of divisors of interest on $\wY(E_6)$, labeled by types $a,b,e,a_2,a_3,a_4$. The divisors of types
$a,b,e$ are the strict transforms of the $A_1$, $A_2^3$, and Eckardt divisors on $\oY$, respectively. The divisors of
type $a_i$, $i=2,3,4$ are the (strict transforms of the) exceptional divisors over the intersections of $i$ $A_1$
divisors in $\oY$. We refer to the divisors of types $a,b,a_2,a_3,a_4$ as the boundary divisors of $\wY$, and the
divisors of type $e$ as the Eckardt divisors of $\wY$. We describe in detail each type of divisor and their
intersections.

\begin{proposition} \label{prop:a_div}
    There are 36 type $a$ divisors on $\wY(E_6)$. A given type $a$ divisor $D_a$ is isomorphic to the blowup of
    $\oM_{0,6}$ at the 15 points $D_{ij} \cap D_{kl} \cap D_{mn}$, and then the strict transforms of the 45 lines
    $D_{ij} \cap D_{kl}$. We write $F_{ij,kl,mn}$ for the strict transform of the exceptional divisor over $D_{ij} \cap
    D_{kl} \cap D_{mn}$, $F_{ij,kl}$ for the exceptional divisor over the strict transform of $D_{ij} \cap D_{kl}$.
    Additionally, we write $F_{ij}$ for the strict transform of the divisor $D_{ij}$ on $\oM_{0,6}$, and $F_{ijk}$ for
    the strict transform of the divisor $D_{ijk}$ on $\oM_{0,6}$.

    The nonempty intersections of $D_a$ with the other divisors are as follows.
    \begin{enumerate}
        \item 15 type $a_4$ divisors, intersecting in the 15 $F_{ij,kl,mn}$'s.
        \item 45 type $a_3$ divisors, intersecting in the 45 $F_{ij,kl}$'s.
        \item 15 type $a_2$ divisors, intersecting in the 15 $F_{ij}$'s.
        \item 10 type $b$ divisors, intersecting in the 10 $F_{ijk}$'s.
        \item 15 Eckardt divisors, intersecting in the strict transforms of the 15 Keel-Vermeire divisors.
    \end{enumerate}
    The class of $D_a\vert_{D_a}$ is
    \begin{align*}
        \frac{-8\sum F_{ij,kl,mn} - 7\sum F_{ij,kl} - 6\sum F_{ij} - 3\sum F_{ijk}}{5}.
    \end{align*}
\end{proposition}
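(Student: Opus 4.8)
The plan is to compute $D_a|_{D_a}$ by tracking the self-intersection (equivalently, the normal bundle) of the strict transform of the $A_1$ divisor $D_{A_1} \cong \oM_{0,6}$ through the blowup sequence $\wY \to \oY$, starting from the value $D_{A_1}|_{D_{A_1}} = (-B_2 - 3B_3)/5$ of \cref{prop:A1_naruki}. First I would observe that, restricted to $D_{A_1}$, the blowups induce a sequence of blowups of $\oM_{0,6}$: the intersections of four $A_1$ divisors meeting $D_{A_1}$ are exactly the $15$ ``snowflake'' points $D_{ij}\cap D_{kl}\cap D_{mn}$; the intersections of three $A_1$ divisors are the $45$ curves $D_{ij}\cap D_{kl}$; and the intersections of two $A_1$ divisors restrict to the $15$ boundary divisors $D_{ij}$. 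Here it is important that the remaining codimension-three boundary points of $\oM_{0,6}$, the ``caterpillars'' $D_{ij}\cap D_{ijk}\cap D_{lm}$, involve an $A_2^3$ divisor and so are \emph{not} blown up. Thus $D_a$ is obtained from $\oM_{0,6}$ by blowing up the $15$ points, then the strict transforms of the $45$ curves, while the final blowups of the $2A_1$ surfaces fix $D_a$ as a variety but alter its normal bundle.

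The key tool is the following. If $f\colon X'\to X$ is the blowup of a smooth variety along a smooth center $Z$ contained in a smooth divisor $D$, with exceptional divisor $E$, then the multiplicity of $D$ along $Z$ is $1$, so $D' = f^*D - E$ and restriction gives $D'|_{D'} = g^*(D|_D) - E_D$, where $g\colon D'\to D$ is the induced blowup along $Z$ and $E_D = E|_{D'}$. Iterating over the two blowups of points and curves, and writing $\pi\colon D^{(2)}\to \oM_{0,6}$ for their composite, I obtain
\[
    D^{(2)}|_{D^{(2)}} = \pi^*\!\left(\frac{-B_2 - 3B_3}{5}\right) - \sum F_{ij,kl,mn} - \sum F_{ij,kl},
\]
using that the total transform of each point-exceptional $\bP^2$ is its strict transform $F_{ij,kl,mn}$, since the curve centers meet it transversally in points and contribute no exceptional component.

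It remains to expand $\pi^*B_2$ and $\pi^*B_3$ in strict transforms and exceptional divisors, which is pure incidence combinatorics on $\oM_{0,6}$. Since no snowflake point and no curve $D_{ij}\cap D_{kl}$ lies on any $D_{ijk}$, one has $\pi^*B_3 = \sum F_{ijk}$. For $B_2$, each $D_{ij}$ contains exactly the $3$ snowflake points $D_{ij}\cap D_{kl}\cap D_{mn}$ and the $6$ curves $D_{ij}\cap D_{cd}$; dually each snowflake point lies on $3$ of the $D_{ij}$ and each curve on $2$, so $\pi^*B_2 = \sum F_{ij} + 2\sum F_{ij,kl} + 3\sum F_{ij,kl,mn}$. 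Substituting gives $D^{(2)}|_{D^{(2)}} = \frac{1}{5}\!\left(-8\sum F_{ij,kl,mn} - 7\sum F_{ij,kl} - \sum F_{ij} - 3\sum F_{ijk}\right)$.

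Finally I would account for the blowups of the $2A_1$ surfaces. Each such surface restricts on $D^{(2)}$ to the divisor $F_{ij}$, and the $F_{ij}$ are pairwise disjoint (their pairwise intersections are the curves $D_{ij}\cap D_{kl}$, already blown up), so these blowups are independent; each is a blowup of $\oY_2$ along a divisor contained in a divisor, which leaves $D_a\cong D^{(2)}$ but subtracts $F_{ij}$ from the normal bundle, i.e. $D_a|_{D_a} = D^{(2)}|_{D^{(2)}} - \sum F_{ij}$. This shifts the coefficient of $F_{ij}$ from $-1$ to $-6$ and yields the stated class. I expect the main obstacle to be the careful bookkeeping of the two middle steps: correctly distinguishing which boundary strata of $\oM_{0,6}$ are actually blown up, and tracking the multiplicities $2$ and $3$ in $\pi^*B_2$ together with the transversality claims that prevent the point-exceptionals from acquiring spurious corrections.
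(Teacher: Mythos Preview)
Your proposal is correct and follows essentially the same approach as the paper: both start from $D_{A_1}\vert_{D_{A_1}} = (-B_2-3B_3)/5$ on $\oY$, apply the standard formula that blowing up a center contained in a smooth divisor subtracts the exceptional divisor from the normal bundle (the paper cites \cite[B.6.10]{fultonIntersectionTheory1998}), and expand the pullbacks via the same multiplicity counts $\sum D_{ij} = \sum F_{ij} + 2\sum F_{ij,kl} + 3\sum F_{ij,kl,mn}$ and $\sum D_{ijk} = \sum F_{ijk}$. The only cosmetic difference is that you separate the three blowup stages explicitly (and in particular isolate the final $-\sum F_{ij}$ correction from the $2A_1$-surface blowups), whereas the paper collapses all three subtractions into a single line.
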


\begin{proof}
    The description of $D_a$ and its intersections with the other boundary divisors follows from \cref{prop:A1_naruki}
    and the construction of $\wY(E_6)$. Recall from that proposition that there are two types of intersections of
    $D_{A_1}$ with an Eckardt divisor. The first type gives a $D_{ij}$ surface on $D_{A_1} \cong \oM_{0,6}$; this
    surface is blown up in the construction of $\wY(E_6)$, separating $D_a$ from the given Eckardt divisor. The second
    type of intersection with an Eckardt divisor gives a Keel-Vermeire divisor on $\oM_{0,6}$, so after the blowups the
    intersection is the strict transform of the Keel-Vermeire divisor.

    We explain the calculation of $D_a\vert_{D_a}$. Recall from \cref{prop:A1_naruki} that in $\oY$, we have
    \[
        D_{A_1}\vert_{D_{A_1}} = \frac{-\sum D_{ij} - 3\sum D_{ijk}}{5}.
    \]
    In the sequence of blowups $\wY \to \oY$, the divisor $D_{A_1}$ either contains each blown up center or is disjoint
    from it.  Standard formulas for the normal bundle of a strict transform \cite[B.6.10]{fultonIntersectionTheory1998}
    therefore imply that
    \begin{align*}
        D_{A_1}\vert_{D_{A_1}} &= \frac{-\sum D_{ij} - 3\sum D_{ijk}}{5} - \sum F_{ij,kl,mn} - \sum F_{ij,kl} - \sum
        F_{ij},
    \end{align*}
    where $D_{ij}$, $D_{ijk}$ denote the classes of the pullbacks of $D_{ij}$ and $D_{ijk}$. Since each $D_{ij} \cap
    D_{kl} \cap D_{mn}$ is contained in 3 $D_{ij}$'s and disjoint from each $D_{ijk}$, and each $D_{ij} \cap D_{kl}$ is
    contained in 2 $D_{ij}$'s and intersects any $D_{ijk}$ either trivially or transversally, it follows that
    \[
        \sum D_{ij} = \sum F_{ij} + 3\sum F_{ij,kl,mn} + 2\sum F_{ij,kl}, \;\; \text{ and } \sum D_{ijk} = \sum F_{ijk}.
    \]
    The result follows.
\end{proof}

\begin{proposition} \label{prop:b_div}
    There are 40 type $b$ divisors on $\wY(E_6)$. A given type $b$ divisor $D_b$ is isomorphic to the blowup of
    $(\bP^1)^3$ at the 27 points $p \times q \times r$ and 27 lines $p \times q \times \bP^1$, $p \times \bP^1 \times
    q$, $\bP^1 \times q \times r$, $p,q,r \in \{0,1,\infty\}$. We write $e_{pqr}$ for the strict transform of the
    exceptional divisor over $p \times q \times r$, and $e_{pqx}$, $e_{pxr}$, $e_{xqr}$ respectively for the exceptional
    divisor over the strict transform of the line $p \times q \times \bP^1$, $p \times \bP^1 \times q$, $\bP^1 \times q
    \times r$.

    The nonempty intersections of $D_b$ with the other boundary divisors are as follows.
    \begin{enumerate}
        \item 27 type $a_3$ divisors, intersecting in the 27 $e_{pqr}$'s.
        \item 27 type $a_2$ divisors, intersecting in the 27 $e_{pqx}$, $e_{pxr}$, and $e_{xqr}$'s.
        \item 9 type $a$ divisors, intersecting in the strict transforms of hypersurfaces $p_1 \times \bP^1 \times
            \bP^1$, $\bP^1 \times p_2 \times \bP^1$, $\bP^1 \times \bP^1 \times p_3$.
        \item 18 Eckardt divisors, intersecting in the strict transforms of the smooth hypersurfaces described in
            \cref{prop:A23_naruki}.
    \end{enumerate}
    The class of $D_b\vert_{D_b}$ is
    \[
        -h_1 - h_2 - h_3.
    \]
\end{proposition}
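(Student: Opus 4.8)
The plan is to realize $D_b$ as the strict transform of an $A_2^3$ divisor $D_{A_2^3} \cong (\bP^1)^3$ on $\oY$ under the blowup $\wY \to \oY$, and to track how each blowup center meets $D_{A_2^3}$ using \cref{prop:A23_naruki}. Since there are $40$ divisors of type $A_2^3$ on $\oY$, and each meets the blowup centers in proper subvarieties, their strict transforms are $40$ distinct type $b$ divisors, giving the stated count.

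First I would determine which intersections of $A_1$ divisors meet $D_{A_2^3}$. By \cref{prop:A23_naruki}, $D_{A_2^3}$ meets exactly $9$ of the $A_1$ divisors, cutting out the nine coordinate hypersurfaces $\{p\}\times\bP^1\times\bP^1$, etc. The crucial combinatorial observation is that a nonempty intersection of these nine hypersurfaces uses at most one hypersurface per factor, hence involves at most three of them: three (one per factor) meeting in a point $p\times q\times r$, and two (from distinct factors) meeting in a line $p\times q\times\bP^1$, etc. Consequently the intersections of $2$ $A_1$ divisors (surfaces in $\oY$) meet $D_{A_2^3}$ transversally in the $27$ lines, the intersections of $3$ $A_1$ divisors (curves) meet it transversally in the $27$ points, and the intersections of $4$ $A_1$ divisors (points) are disjoint from $D_{A_2^3}$.

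I would then feed this into the construction of $\wY$, which blows up these centers in increasing order of dimension. The $4$ $A_1$ points are disjoint from $D_{A_2^3}$ and leave it unchanged; blowing up the $3$ $A_1$ curves blows up the $27$ points $p\times q\times r$ on $(\bP^1)^3$; and blowing up the strict transforms of the $2$ $A_1$ surfaces blows up the strict transforms of the $27$ lines. This identifies $D_b$ with the asserted iterated blowup of $(\bP^1)^3$. The listed intersections then read off directly: the exceptional divisors of the two rounds of blowups give the intersections with the $27$ type $a_3$ divisors (the $e_{pqr}$'s) and the $27$ type $a_2$ divisors (the $e_{pqx}$'s, etc.); the strict transforms of the nine coordinate hypersurfaces give the intersections with the $9$ type $a$ divisors; and the strict transforms of the $18$ smooth hypersurfaces of \cref{prop:A23_naruki} give the type $e$ intersections. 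Distinct $A_2^3$ divisors are disjoint on $\oY$, and blowups do not create intersections of strict transforms of disjoint subvarieties, so there are no $b$-$b$ intersections to record.

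The self-intersection is where the type $b$ case genuinely diverges from \cref{prop:a_div}, and this is the point I would emphasize. In \cref{prop:a_div} the blowup centers are \emph{contained} in $D_a$, forcing correction terms via the strict-transform formula. Here, by contrast, every center meets $D_{A_2^3}$ transversally and is not contained in it, so at each stage of the blowup the strict transform of $D_{A_2^3}$ coincides with its total transform. Writing $\mu : D_b \to D_{A_2^3}$ for the induced iterated blowup morphism and $\pi$ for the ambient blowup, functoriality of restriction gives $D_b\vert_{D_b} = \pi^*(D_{A_2^3})\vert_{D_b} = \mu^*\!\left(D_{A_2^3}\vert_{D_{A_2^3}}\right) = \mu^*(-h_1-h_2-h_3)$. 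Since the linear systems $|h_i|$ are basepoint free, a general member of each avoids the blown-up points and lines, so $\mu^* h_i$ carries no exceptional component and equals $h_i$; hence $D_b\vert_{D_b} = -h_1-h_2-h_3$. The main obstacle is therefore not a hard computation but the careful verification of transversality together with the ``at most one hypersurface per factor'' count, since these are precisely what guarantee the absence of the exceptional corrections present in the type $a$ case.
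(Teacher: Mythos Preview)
Your proposal is correct and follows essentially the same approach as the paper: both argue that everything is read off from the blowup construction $\wY \to \oY$ together with \cref{prop:A23_naruki}, and both justify $D_b\vert_{D_b} = -h_1-h_2-h_3$ by noting that $D_{A_2^3}$ is either disjoint from or meets transversally each blown-up center, so the self-intersection class is simply the pullback of $D_{A_2^3}\vert_{D_{A_2^3}}$. Your write-up is considerably more detailed than the paper's (which is only two sentences), but the underlying argument is the same.
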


\begin{proof}
    Again everything follows directly from the blowup construction $\wY \to \oY$ and \cref{prop:A23_naruki}. We note
    here that the class of $D_b\vert_{D_b}$ is the pullback of the class of $D_{A_2^3}\vert_{D_{A_2^3}}$, because the
    divisor $D_{A_2^3}$ is either disjoint from or intersects transversally each blown up center.
\end{proof}

\begin{remark} \label{rmk:Bl9P1xP1}
    We observe for later use that the strict transform of the hypersurface of the form $p_i \times \bP^1 \times \bP^1$,
    etc. on a type $b$ divisor is isomorphic to the blowup $Bl_9(\bP^1 \times \bP^1)$ of $\bP^1 \times \bP^1$ at the 9
    points $p \times q$, $p,q \in \{0,1,\infty\}$. We denote the exceptional divisor over $p \times q$ by $e_{pq}$.
\end{remark}

Recall from \cref{prop:eckardt_naruki} that an Eckardt divisor on $\oY$ is isomorphic to the minimal wonderful
compactification $\oX(D_4)$ of the $D_4$ hyperplane arrangement in $\bP^3$; this is obtained by sequentially blowing up
$\bP^3$ at the 12 points and 16 lines where the hyperplane arrangement $x_i=\pm x_j$ is not normal crossings. Eckardt
divisors are labeled by triples of lines on marked cubic surfaces; we say that two Eckardt divisors have a common line
if the corresponding two triples of lines have a common line.

For completeness we record the following description of the Eckardt divisors on $\wY$, although this will not be used
later in the article.

\begin{proposition} \label{prop:eckardt_div}
    There are 45 Eckardt (type $e$) divisors on $\wY(E_6)$. A given Eckardt divisor $D_e$ on $\wY(E_6)$ is isomorphic to
    the blowup of the corresponding Eckardt divisor $D \cong \oX(D_4)$ on $\oY(E_6)$ along a total of 48 points, then the
    strict transforms of 87 lines, as follows.
    \begin{enumerate}
        \item $36 = 3 \cdot 12$ points: 3 points each on the exceptional divisors over the $A_3$ points (corresponding
            to intersections of 4 $A_1$ divisors on $D$, such as $D_7 \cap D_{56} \cap D_{34} \cap D_{12}$.)
        \item $12$ points:
            \begin{enumerate}
                \item 6 points $(1:1:0:0),\ldots,(0:0:1:1)$, corresponding to intersections of 3 $A_1$ divisors such as
            $D_{12} \cap D_{34} \cap D_{126}$, and
                \item 6 points $(1:-1:0:0),\ldots,(0:0:1:-1)$, corresponding to intersections of 3 $A_1$ divisors such
                    as $D_{126} \cap D_{346} \cap D_{12}$.
            \end{enumerate}
        \item $72 = 12 \cdot 6$ lines, given by the intersection of a $D_4$ hyperplane with an exceptional divisor over
            an $A_3$ point (corresponding to intersections of 3 $A_1$ divisors such as $D_7 \cap D_{56} \cap D_{12}$.)
        \item $15$ lines:
            \begin{enumerate}
                \item 3 lines like $x_i=x_j, x_k=x_l$, corresponding to intersections of 2 $A_1$ divisors such as $D_{12}
                    \cap D_{34}$,
                \item 6 lines like $x_i=x_j, x_i=-x_j$, corresponding to intersections of 2 $A_1$ divisors on $D$ such
                    as $D_{12} \cap D_{126}$,
                \item 3 lines $x_i=x_j, x_k=-x_l$, corresponding to intersections of 2 $A_1$ divisors on $D$ such as
                    $D_{12} \cap D_{346}$, and
                \item 3 lines $x_i=-x_j,x_k=-x_l$ (corresponding to intersections of 2 $A_1$ divisors on $D$ such as
                    $D_{126} \cap D_{346}$).
            \end{enumerate}
    \end{enumerate}
    The nonempty intersections of $D_e$ with other divisors are as follows.
    \begin{enumerate}
        \item 36 $a_4$ divisors, intersecting in the 36 exceptional divisors in (1) above.
        \item $12+72=84$ $a_3$ divisors, intersecting in the $12+72$ exceptional divisors in (2) and (3) above.
        \item $27$ $a_2$ divisors.
            \begin{enumerate}
                \item 15 $a_2$ divisors, intersecting in the 15 exceptional divisors in (4) above.
                \item 12 $a_2$ divisors, intersecting in the strict transforms of the exceptional divisors over the
                    original 12 $A_3$ points.
            \end{enumerate}
        \item 12 $a$ divisors, intersecting in the strict transforms of the $D_4$ hyperplanes.
        \item 16 $b$ divisors, intersecting in the strict transforms of the $A_2$ lines.
        \item Every Eckardt divisor:
            \begin{enumerate}
                \item If $D_e$ and $D_e'$ do not have a common line, then there is a unique third Eckardt divisor
                    $D_e''$ such that the intersection of any two of $D_e,D_e',D_e''$ is the same as the intersection of
                    all three. This is the strict transform of one of the 16 quadrics on $\oX(D_4)$, as in
                    \cref{prop:eckardt_naruki}.  (There are 16 of these pairs.)
                \item If $D_e$ and $D_e'$ have a common line, then their intersection is the strict transform of an
                    $F_4$ hyperplane $x_i=0$ or $x_1\pm x_2 \pm x_3 \pm x_4=0$. (There are 12 of these.)
            \end{enumerate}
    \end{enumerate}
\end{proposition}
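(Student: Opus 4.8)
The plan is to trace the description of the Eckardt divisor $D = D_e \cong \oX(D_4)$ on $\oY$ given in \cref{prop:eckardt_naruki} through the three rounds of the blowup $\wY \to \oY$, recording for each blown-up center (an intersection of $2$, $3$, or $4$ $A_1$ divisors) how it meets $D$. The only tool needed is the standard fact that if a smooth center $Z$ is either contained in a smooth divisor $D$, or meets $D$ transversally, then the strict transform of $D$ in $Bl_Z\oY$ is the blowup of $D$ along $Z \cap D$, and the exceptional divisor over $Z$ meets this strict transform exactly in the exceptional locus of $Bl_{Z \cap D} D$. Thus both the presentation of $D_e$ as an iterated blowup of $\oX(D_4)$ and all of its intersections with the exceptional divisors of types $a_2,a_3,a_4$ are determined once we know, center by center, whether it is contained in, transverse to, or disjoint from $D$.

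First I would record, from \cref{prop:eckardt_naruki} and the sample computation in \cref{ex:eckardt_naruki}, that the $A_1$ divisors meet $D \cong \oX(D_4)$ in precisely two ways: $12$ of them restrict to the $D_4$ hyperplanes, while the remaining $24$ occur in $12$ orthogonal pairs $(D_{A_1},D_{A_1'})$ with $D \cap D_{A_1} = D \cap D_{A_1'} = D_{A_1} \cap D_{A_1'}$ equal to one of the $12$ exceptional divisors of $\oX(D_4) \to \bP^3$ over an $A_3$ point. Consequently the $2$-fold intersection $D_{A_1} \cap D_{A_1'}$ of such a pair is a surface \emph{contained} in $D$ (indeed a divisor in $D$), whereas all other intersections of $A_1$ divisors meet $D$ either transversally or in the contained configurations identified in the trace below.

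Next I would go through the rounds of $\wY \to \oY$ in order. In the first round the $4$-fold $A_1$-intersections (points of $\oY$) lying on $D$ are blown up; by \cref{ex:eckardt_naruki} these are exactly the $3 \cdot 12 = 36$ points sitting three apiece on the exceptional divisors over the $A_3$ points, arising as the meeting points of strict transforms of pairs of $D_4$ hyperplanes on each such exceptional surface. In the second round the $3$-fold $A_1$-intersections (curves of $\oY$) are blown up: those contained in $D$ are the $72 = 12 \cdot 6$ lines cut on an $A_3$ exceptional divisor by a $D_4$ hyperplane (e.g.\ $D_7 \cap D_{56} \cap D_{12}$), contributing $72$ blown-up lines and $72$ intersections with $a_3$ divisors, while those meeting $D$ transversally give the $12$ further points of types $(1:\pm 1:0:0)$, contributing $12$ blown-up points and $12$ more $a_3$-intersections. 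In the third round the $2$-fold $A_1$-intersections (surfaces of $\oY$) are blown up: the $12$ surfaces $D_{A_1} \cap D_{A_1'}$ of the orthogonal pairs are divisors in $D$, so blowing them up leaves the strict transform of $D$ unchanged but produces $a_2$ divisors meeting it in the strict transforms of the $12$ $A_3$ exceptional divisors, giving $12$ of the $a_2$-intersections; the remaining $2$-fold surfaces meet $D$ transversally in the $15$ lines of the stated types, giving $15$ blown-up lines and the other $15$ $a_2$-intersections. Collecting, $D_e$ is the blowup of $\oX(D_4)$ at $36 + 12 = 48$ points and then the strict transforms of $72 + 15 = 87$ lines, with the asserted $a_2,a_3,a_4$ incidences, while the intersections of $D_e$ with the $a$, $b$, and Eckardt divisors are just the strict transforms of the corresponding incidences on $\oY$ listed in \cref{prop:eckardt_naruki}.

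The main obstacle is purely the bookkeeping: one must verify for each center whether it lies in $D$ or meets it transversally (and confirm transversality, so that the strict-transform principle applies), and then check that the induced point- and line-blowups on $D$ can be legitimately reordered into ``all $48$ points, then the $87$ lines,'' consistently with the genuine order ($4$-fold, then $3$-fold, then $2$-fold) of $\wY \to \oY$. The one point requiring care is that the $12$ transverse points and the $72$ lines both produced in the second round commute; this holds because the points $(1:\pm 1:0:0)$ lie off the $A_3$ exceptional divisors (they are not among the centers blown up by $\oX(D_4) \to \bP^3$), whereas the $72$ lines lie on those exceptional divisors, so the two families of centers are disjoint. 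All of these incidences and transversalities can be checked directly in Naruki's torus coordinates $\lambda,\mu,\nu,\rho$ using the equations of the Eckardt divisors, exactly as in the proof of \cref{prop:eckardt_naruki}.
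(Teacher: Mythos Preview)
Your proposal is correct and takes essentially the same approach as the paper: trace the description of the Eckardt divisor from \cref{prop:eckardt_naruki} through the blowup $\wY \to \oY$, using the explicit example in \cref{ex:eckardt_naruki} to identify which centers are contained in, transverse to, or disjoint from $D$. The paper's own proof is a one-sentence pointer to exactly this computation, so your write-up is a faithful (and more detailed) expansion of it; the reordering concern you flag is harmless since the round-2 centers are already pairwise disjoint after round 1, so the induced point- and line-blowups on $D$ are automatically disjoint.
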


\begin{proof}
    This again follows from the blowup construction $\wY \to \oY$ and \cref{prop:eckardt_naruki}, as can be seen by
    considering a specific example, see \cref{ex:eckardt_naruki}. In the proposition we have described the corresponding
    blowups induced on this particular example.
\end{proof}

\begin{notation} \label{not:Bl7P2}
    Let $Bl_7\bP^2$ be the blowup of $\bP^2$ at the four points $p_1=(1:0:0)$, $p_2=(0:1:0)$, $p_3=(0:0:1)$,
    $p_4=(1:1:1)$, followed by the three points $p_5=(1:1:0)$, $p_6=(1:0:1)$, $p_7=(0:1:1)$. Denote the exceptional divisors
    by $e_1,\ldots,e_7$. Let $\ell_{ijk}$ denote the strict transform of the line through the points $p_i$, $p_j$, and
    $p_k$ (so $ijk = 125$, $345$, $136$, $246$, $147$, or $237$), and $\ell_{ij}$ the strict transform of the line
    through the points $p_i$ and $p_j$, $i,j \in \{5,6,7\}$.
\end{notation}

\begin{proposition} \label{prop:a2_div}
    There are $270$ type $a_2$ divisors. A given type $a_2$ divisor $D_{a_2}$ is isomorphic to $Bl_7\bP^2 \times
    \bP^1$, where $Bl_7\bP^2$ is as in \cref{not:Bl7P2}. Let $h_1,h_2$ denote the classes of the pullbacks of general
    hyperplanes from $\bP^2$ and $\bP^1$, and let $e_i$ denote the class of $e_i \times \bP^1$.

    The nonempty intersections of $D_{a_2}$ with the other boundary divisors are as follows.
    \begin{enumerate}
        \item 3 type $a_4$ divisors, intersecting in the 3 divisors $e_i \times \bP^1$ for $i=5,6,7$.
        \item 6 type $a_3$ divisors, intersecting in the 6 divisors $\ell_{ijk} \times \bP^1$, where $\ell_{ijk}$ is as
            in \cref{not:Bl7P2}. The class of such an intersection is $h_1-e_i-e_j-e_k$.
        \item 2 type $a$ divisors, intersecting in 2 distinct divisors $Bl_7\bP^2 \times pt$ of class $h_2$.
        \item 4 type $b$ divisors, intersecting in the 4 divisors $e_i \times \bP^1$, for $i=1,2,3,4$.
        \item 5 Eckardt divisors:
            \begin{enumerate}
                \item 3 intersecting in $\ell_{ij} \times \bP^1$, where $\ell_{ij}$, $i,j \in \{5,6,7\}$ is the strict
                    transform of the line through the $i$th and $j$th blown up points. The class of such an intersection
                    is $h_1 - e_i - e_j$.
                \item 2 intersecting in 2 distinct divisors $Bl_7\bP^2 \times pt$ of class $h_2$.
            \end{enumerate}
    \end{enumerate}
    The class of $D_{a_2}\vert_{D_{a_2}}$ is
    \[
        -7h_1+3(e_1+e_2+e_3+e_4)+e_5+e_6+e_7-h_2.
    \]
\end{proposition}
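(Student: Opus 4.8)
The plan is to realize $D_{a_2}$ as the exceptional divisor of the final blowup in the construction $\wY \to \oY$ of \cite[Section 10]{hackingStablePairTropical2009}, namely the blowup along (the strict transform of) a $2A_1$ surface $S = D_{A_1} \cap D_{A_1'}$, and to read off all the stated data from \cref{prop:A1_naruki,prop:A23_naruki,prop:eckardt_naruki} together with standard strict-transform formulas. First the count: a type $a_2$ divisor corresponds to an unordered pair of orthogonal $A_1$ divisors, equivalently a $2A_1$ surface. Since each of the $36$ type $A_1$ divisors meets exactly $15$ others (\cref{prop:A1_naruki}(1)), there are $36 \cdot 15/2 = 270$ such surfaces, hence $270$ type $a_2$ divisors.

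Next I would identify the isomorphism type. By \cref{prop:A1_naruki}, $S$ is the trace $D_{ij} \subset D_{A_1} \cong \oM_{0,6}$, a boundary divisor $\cong \oM_{0,5} \cong Bl_4\bP^2$, whose ten boundary divisors are the six ``lines'' and four ``exceptional curves''. I would check that the six lines are exactly the $3A_1$ curves $D_{A_1}\cap D_{A_1'} \cap D_{A_1''}$ lying in $S$ (these come from the $2$-element traces $D_{kl}$ of \cref{prop:A1_naruki}(1)), while the four exceptional curves are exactly the traces $S \cap D_{A_2^3}$ of the four $A_2^3$ divisors meeting $S$ (the $3$-element traces $D_{ijk}$ of \cref{prop:A1_naruki}(2)). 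The three points where the six lines meet pairwise are the three $4A_1$ points on $S$. Now I track the blowups $\wY \to \oY$, done in increasing dimension: blowing up a $3A_1$ curve $C \subset S$ does not alter the strict transform of $S$, since $C$ is a Cartier divisor in the smooth surface $S$; blowing up each of the three $4A_1$ points blows up $S$ at that point. Hence the strict transform $\widetilde{S}$ is $Bl_4\bP^2$ blown up at three further points, i.e.\ $\widetilde{S} \cong Bl_7\bP^2$ in the labeling of \cref{not:Bl7P2}, and $D_{a_2} = \bP(N_{\widetilde{S}})$ is a $\bP^1$-bundle over it.

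To trivialize this $\bP^1$-bundle I would use that $\widetilde S$ is the transverse intersection $\widetilde D_{A_1} \cap \widetilde D_{A_1'}$, so $N_{\widetilde S} \cong \cO(\widetilde D_{A_1})\vert_{\widetilde S} \oplus \cO(\widetilde D_{A_1'})\vert_{\widetilde S}$, and then show the two summands are a single line bundle $L$; this forces $\bP(L \oplus L) \cong \widetilde S \times \bP^1$. The equality $\cO(\widetilde D_{A_1})\vert_{\widetilde S} = \cO(\widetilde D_{A_1'})\vert_{\widetilde S}$ I would obtain from the explicit computation of $L$ below, in which $A_1$ and $A_1'$ play symmetric roles (alternatively, from the symmetry exchanging the two orthogonal roots and fixing $S$). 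The intersections with the remaining divisors then follow by tracing \cref{prop:A1_naruki,prop:A23_naruki,prop:eckardt_naruki} through the blowup: the three $a_4$ divisors over the $4A_1$ points meet $D_{a_2}$ in $e_i \times \bP^1$ ($i=5,6,7$); the six $a_3$ divisors over the $3A_1$ curves in the $\ell_{ijk}\times\bP^1$; the two type $a$ divisors $\widetilde D_{A_1},\widetilde D_{A_1'}$ in two sections of class $h_2$; the four type $b$ divisors in $e_i\times\bP^1$ ($i=1,2,3,4$); and the five Eckardt intersections as the three of class $h_1-e_i-e_j$ from the lines through $p_5,p_6,p_7$ and the two sections of class $h_2$ from the Eckardt divisors containing $S$ (\cref{prop:A1_naruki}(3)).

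Finally, the self-intersection. On $\bP(N)=\bP(L\oplus L)$ one has $\cO_E(E)=\cO_{\bP(N)}(-1)$, which under the product identification equals $-h_2 - \pi^* c_1(L)$, so it remains to compute $L=\cO(\widetilde D_{A_1})\vert_{\widetilde S}$. I would do this exactly as in the proof of \cref{prop:a_div}: start from $D_{A_1}\vert_{D_{A_1}} = \frac{-B_2 - 3B_3}{5}$ (\cref{prop:A1_naruki}), restrict to $S = D_{ij}$ using the intersection numbers on $\oM_{0,5}$, and correct for the three point-blowups via the strict-transform formula \cite[B.6.10]{fultonIntersectionTheory1998}. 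This should yield $L = 7h_1 - 3(e_1+e_2+e_3+e_4) - (e_5+e_6+e_7)$ and hence the stated class $-7h_1+3(e_1+e_2+e_3+e_4)+e_5+e_6+e_7-h_2$. I expect this final computation, together with pinning down the exact coefficients and the triviality of the $\bP^1$-bundle, to be the main technical obstacle; the combinatorics of the intersections, by contrast, is a direct transcription of the earlier propositions through the blowup.
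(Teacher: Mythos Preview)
Your overall strategy is the paper's: realize $D_{a_2}$ as the exceptional divisor over the strict transform $\widetilde S$ of a $2A_1$ surface, show the normal bundle is $L^{\oplus 2}$ so that $D_{a_2}\cong Bl_7\bP^2\times\bP^1$, and read off the self-intersection from the tautological class. Your use of the transverse intersection $\widetilde S=\widetilde D_{A_1}\cap\widetilde D_{A_1'}$ to obtain the splitting $N_{\widetilde S}\cong\cO(\widetilde D_{A_1})\vert_{\widetilde S}\oplus\cO(\widetilde D_{A_1'})\vert_{\widetilde S}$ is in fact slightly cleaner than the paper's exact-sequence argument (which cites \cite[Lemma 10.11]{hackingStablePairTropical2009}).

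There is, however, a real omission in your self-intersection computation. You correctly observe that blowing up a $3A_1$ curve $C\subset S'$ leaves the strict transform of the surface unchanged, since $C$ is Cartier on $S'$. But it \emph{does} change the normal bundle: the strict transform of $D_{A_1}$ picks up a $-E_C$, and $E_C\vert_{\widetilde S}$ is exactly the class $[C]$ on $Bl_7\bP^2$. The six $3A_1$ curves in $S$ sum to $6h_1-3(e_1+\cdots+e_4)-2(e_5+e_6+e_7)$, and subtracting this is precisely what takes you from $-h_1-(e_5+e_6+e_7)$ (the result after ``correcting for the three point-blowups'' only) to the correct $-7h_1+3(e_1+\cdots+e_4)+(e_5+e_6+e_7)$. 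The paper records this step explicitly as the twist $N''=N'\otimes\cO(-6h_1+3\sum_{i\le4}e_i+2\sum_{i\ge5}e_i)$. As stated, your method cannot produce the class you claim for $L$. (Separately, there is a harmless sign slip: $\cO_E(E)=\cO_{\bP(L\oplus L)}(-1)$ has class $\pi^*c_1(L)-h_2$, so $c_1(L)=\cO(\widetilde D_{A_1})\vert_{\widetilde S}$ equals $-7h_1+\cdots$, not $+7h_1-\cdots$.)
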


\begin{proof}
    The divisor $D_{a_2}$ is obtained as the divisor over a $2A_1$ surface $S$ in $\oY(E_6)$. The surface $S$ is
    isomorphic to $\oM_{0,5} \cong Bl_4\bP^2$. Let $D$ be an $A_1$ divisor on $\oY$ containing $S$. By \cite[Section
    4]{colomboChowGroupModuli2004}, $N_{D/\oY}\vert_S = N_{S/D} \cong \cO(-1)$, the pullback to $Bl_4\bP^2$ of
    $\cO_{\bP^2}(-1)$, thus from the standard exact sequence
    \[
        0 \to N_{S/D} \to N_{S/\oY} \to N_{D/\oY}\vert_S \to 0,
    \]
    we see that $N_{S/\oY} \cong \cO(-1)^2$ (see also \cite[Lemma 10.11]{hackingStablePairTropical2009}). Let $S'$ be
    the strict transform of $S$ under the blowup of the $4A_1$ points. Then $S$ contains three of the $4A_1$ points, and
    $S' \cong Bl_7\bP^2$, with normal bundle $N' = \cO(-1)^2 \otimes \cO(-e_5-e_6-e_7)$. Next let $S''$ be the strict
    transform of $S$ under the blowup of the $3A_1$ curves. Then $S''$ is obtained from $S'$ by blowing up the strict
    transforms of the 6 lines $x_i=0$, $x_i=x_j$.  Since these are divisors on $S'$, we see that $S'' \cong S'$, but the
    normal bundle to $S''$ is
    \[
        N'' = N' \otimes \cO(-6h_1 + 3(e_1+e_2+e_3+e_4) + 2(e_5+e_6+e_7)).
    \]
    Thus, blowing up $S''$, one obtains that $D_{a_2} \cong S'' \times \bP^1 \cong Bl_7\bP^2 \times \bP^1$, with
    \begin{align*}
        D_{a_2}\vert_{D_{a_2}} &= -h_1-h_2 - (e_5-e_6-e_7) - 6h_1 + 3(e_1+e_2+e_3+e_4) + 2(e_5+e_6+e_7) \\
                               &= -7h_1 + 3(e_1+e_2+e_3+e_4) + (e_5+e_6+e_7) - h_2,
    \end{align*}
    as desired. The intersections of $D_{a_2}$ with the other divisors are immediately verified. We explain the 2 types
    of intersections with Eckardt divisors. Recall from \cref{prop:A1_naruki} that the $2A_1$ surface $S$ in $\oY$ is a
    divisor of the form $D_{ij}$ on $D \cong \oM_{0,6}$. There are 2 types of intersections of $D$ with Eckardt divisors
    on $\oY$. The first type gives a Keel-Vermeire divisor; there are 3 of these intersection $D_{ij}$, giving the first
    type of intersection of $D_{a_2}$ with an Eckardt divisor. The second type comes in pairs $D_e \cap D = D_e' \cap D
    = D_{ij}$, giving the second type of intersection of $D_{a_2}$ with an Eckardt divisor.
\end{proof}

\begin{proposition} \label{prop:a3_div}
    There are $540$ type $a_3$ divisors. A given type $a_3$ divisor $D_{a_3}$ is isomorphic to $\bP^1 \times Bl_3\bP^2$,
    where $Bl_3\bP^2$ is the blowup of $\bP^2$ at the 3 coordinate points, with exceptional divisors $e_1,e_2,e_3$. Let
    $h_1,h_2$ be the classes of the pullbacks of general hyperplanes from $\bP^1$ and $\bP^2$, and let $e_i$ be the
    class of the divisor $\bP^1 \times e_i$.

    The nonempty intersections of $D_{a_3}$ with the other boundary divisors are as follows.
    \begin{enumerate}
        \item 1 type $a_4$ divisor, intersecting in a divisor $pt \times Bl_3\bP^2$ of class $h_1$.
        \item 3 type $a_2$ divisors, intersecting in the 3 divisors $\bP^1 \times e_i$ of class $e_i$, $i=1,2,3$.
        \item 3 type $a$ divisors, intersecting in the 3 divisors $\bP^1 \times \ell_{ij}$, where $\ell_{ij}$ is the
            strict transform of the coordinate line through the $i$th and $j$th coordinate points. The class of such an
            intersection is $h_2-e_i-e_j$.
        \item 2 type $b$ divisors, intersecting in 2 distinct divisors $pt \times Bl_3\bP^2$ of class $h_1$.
        \item 7 Eckardt divisors:
            \begin{enumerate}
                \item 1 intersecting in a divisor $pt \times Bl_3\bP^2$ of $h_1$.
                \item 3 pairs, where the 2 divisors in a given pair intersect in $\bP^1 \times \ell_i$, $\bP^1 \times
                    \ell_i'$, with $\ell_i,\ell_i'$ the strict transforms of 2 general lines meeting the $i$th blown up
                    point. The class of such an intersection is $h_2-e_i$.
            \end{enumerate}
    \end{enumerate}
    The class of $D_{a_3}\vert_{D_{a_3}}$ is
    \[
        -2h_1-h_2.
    \]
\end{proposition}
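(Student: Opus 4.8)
The plan is to follow the template of \cref{prop:a2_div}, but now tracking a $3A_1$ \emph{curve} through the blowups $\wY \to \oY$. The divisor $D_{a_3}$ is the exceptional divisor over (the strict transform of) a $3A_1$ curve $C = D_1 \cap D_2 \cap D_3$, where $D_1,D_2,D_3$ are three pairwise orthogonal $A_1$ divisors. By \cref{prop:A1_naruki} each $D_i \cong \oM_{0,6}$, and inside $D_1$ the other two restrict to boundary divisors $D_{ab},D_{cd}$, so $C = D_{ab}\cap D_{cd} \cong \bP^1$. First I would compute $N_{C/\oY} = \bigoplus_{i=1}^3 N_{D_i/\oY}|_C$. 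By \cref{prop:A1_naruki} we have $D_i|_{D_i} = \tfrac{-B_2-3B_3}{5}$, and from the restriction formulas in the proof of \cref{prop:int_nums_Y} (where $B_{A_1}|_{D} = \tfrac{4B_2-3B_3}{5}$, $B_{A_2^3}|_D = B_3$, together with the table values $-2$ and $2$) one extracts $B_2 \cdot C = -1$ and $B_3 \cdot C = 2$ on $\oM_{0,6}$. Hence $\deg N_{D_i/\oY}|_C = \tfrac{-B_2-3B_3}{5}\cdot C = \tfrac{1-6}{5} = -1$, and by the $S_3$-symmetry permuting the three defining roots all three summands agree, so $N_{C/\oY} \cong \cO(-1)^{\oplus 3}$.

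Next I would track $C$ through the blowup sequence. The curve $C$ passes through exactly one $4A_1$ point, namely the unique point $D_{ab}\cap D_{cd}\cap D_{ef}$ completing $\{ab,cd\}$ to a matching of $[6]$; this point is blown up first, and by the standard strict transform formula \cite[B.6.10]{fultonIntersectionTheory1998} each normal direction drops in degree by one, giving $N_{\tilde C} \cong \cO(-2)^{\oplus 3}$ for the strict transform $\tilde C \cong \bP^1$. Blowing up $\tilde C$ then produces the exceptional divisor $E = \bP(N_{\tilde C}^\vee) = \bP(\cO(2)^{\oplus 3}) \cong \bP^1 \times \bP^2$, with $E|_E = \cO_{\bP(N_{\tilde C}^\vee)}(-1) = -(h_2 + 2h_1)$. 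Finally, the three $2A_1$ surfaces $S_{jk} = D_j\cap D_k$ containing $C$ are separated after blowing up $C$ and meet $E$ transversally in the three constant sections $\bP^1 \times P_i$ cut out by the subbundles $N_i \subset N_{\tilde C}$; blowing them up replaces each $\bP^2$ fiber by $Bl_3\bP^2$, yielding $D_{a_3} \cong \bP^1 \times Bl_3\bP^2$. Since the $S_{jk}$ are not contained in $E$, the strict transform $D_{a_3}$ represents the class $\pi^* E$, so its self-intersection is the pullback of $E|_E$, namely $-2h_1 - h_2$, with no exceptional corrections. (Note the blowup of the $4A_1$ point is essential: without it the coefficient of $h_1$ would be $1$, not $2$.)

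For the remaining assertions I would read off the intersections of \cref{prop:A1_naruki} through the blowups, matched against the fiber geometry: the single $4A_1$ point gives the one $a_4$ divisor meeting $D_{a_3}$ in a fiber of class $h_1$; the three surfaces $S_{jk}$ give the three $a_2$ divisors meeting in the $e_i$; the strict transform of $D_i$ meets $E$ in $\bP((N_j\oplus N_k)^\vee)$, the coordinate line through $P_j,P_k$ of class $h_2-e_j-e_k$, giving the three $a$ divisors; and the two $A_2^3$ divisors meeting $C$ (recorded by $B_3\cdot C = 2$) give the two $b$ divisors, each meeting $D_{a_3}$ in a fiber of class $h_1$. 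The count $540$ follows by double counting: each of the $36$ type $a$ divisors contains $45$ curves of type $3A_1$ (disjoint pairs $D_{ab},D_{cd}$ on $\oM_{0,6}$), and each such curve lies on exactly $3$ type $a$ divisors, so $36\cdot 45/3 = 540$.

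The main obstacle is the Eckardt intersections. Here I would use \cref{prop:A1_naruki}(3) to sort the Eckardt divisors meeting $C$ into the Keel--Vermeire type and the paired type, producing the single Eckardt divisor meeting $D_{a_3}$ in a fiber of class $h_1$ and the three pairs meeting in lines of class $h_2-e_i$; verifying that these sit correctly in $Bl_3\bP^2$ (and confirming the projective-bundle convention really yields the coefficient $2$ on $h_1$) is where the care is required.
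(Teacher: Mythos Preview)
Your proposal is correct and follows essentially the same strategy as the paper: compute $N_{C/\oY}\cong\cO(-1)^{\oplus 3}$, twist down to $\cO(-2)^{\oplus 3}$ after blowing up the unique $4A_1$ point, identify the exceptional divisor as $\bP^1\times\bP^2$ with self-intersection $-2h_1-h_2$, and then blow up the three $2A_1$ surfaces meeting it in the sections $\bP^1\times p_i$. The only difference is that the paper computes $N_{C/\oY}$ via the filtration $0\to N_{C/S}\to N_{C/\oY}\to N_{S/\oY}|_C\to 0$ through a $2A_1$ surface $S$ (reusing $N_{S/\oY}\cong\cO(-1)^2$ from \cref{prop:a2_div}), whereas you use the transversal splitting $N_{C/\oY}=\bigoplus_i N_{D_i/\oY}|_C$ and an intersection-number computation on $\oM_{0,6}$; both are valid and yield the same answer.
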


\begin{proof}
    In $\oY$, let $C$ be a $3A_1$ curve and $S$ a $2A_1$ surface containing $C$. The curve $C$ in $S \cong Bl_4\bP^2$
    appears as the strict transform of one of the line $\ell_{ij}$ with class $h-e_i-e_j$ through 2 of the blown up
    points. It follows that $N_{C/S} = \cO(-1)$. Since $N_{S/\oY} = \cO(-1)^2$ (see the proof of \cref{prop:a2_div}), it
    follows from the standard exact sequence
    \[
        0 \to N_{C/S} \to N_{C/\oY} \to N_{S/\oY}\vert_{C} \to 0
    \]
    that $N_{C/\oY} = \cO(-1)^3$. Let $C'$ be the strict transform of $C$ under the blowup of all $4A_1$ points. Since
    $C$ contains exactly one $4A_1$ point, we see that $C' \cong C \cong \bP^1$, but with normal bundle $N' \cong
    \cO(-2)^3$. Therefore, blowing up $C'$, one obtains exceptional divisor $F \cong \bP^1 \times \bP^2$ with $F\vert_F
    = -2h_1-h_2$. Observe that $F$ intersects the strict transforms of 3 $2A_1$ surfaces transversally in the 3 lines
    $\bP^1 \times p_i$, where $p_i$, $i=1,2,3$ are the coordinate points on $\bP^2$. Thus, blowing up the $2A_1$
    surfaces, one obtains that the strict transform $D_{a_3}$ of $F$ is isomorphic to $\bP^1 \times Bl_3\bP^2$, with
    $D_{a_3}\vert_{D_{a_3}} = -2h_1-h_2$. The intersections with the other divisors are immediately verified. (As in the
    proof of \cref{prop:a2_div}, the 2 types of intersections with Eckardt divisors are described, respectively, by
    Eckardt divisors which restrict to Keel-Vermeire or $D_{ij}$ divisors on an $A_1$ divisor $D$ containing $S$ in
    $\oY$.)
\end{proof}

\begin{proposition} \label{prop:a4_div}
    There are $135$ type $a_4$ divisors. A given type $a_4$ divisor $D_{a_4}$ is the blowup of $\bP^3$ at the 4
    coordinate points and the 6 lines between them. Let $h$ be the class of the pullback of a general hyperplane, $e_i$
    the class of the strict transform of the exceptional divisor over the $i$th blown up point, and $e_{ij}$ the class
    of the exceptional divisor over the strict transform of the line through the $i$th and $j$th point.

    The nonempty intersections of $D_{a_4}$ with the other boundary divisors and Eckardt divisors are as follows.
    \begin{enumerate}
        \item 4 type $a_3$ divisors, intersecting in the 4 $e_{i}$'s.
        \item 6 type $a_2$ divisors, intersecting in the 6 $e_{ij}$'s.
        \item 4 type $a$ divisors, intersecting in the strict transforms of the 4 coordinate hyperplanes.  The class of
            such an intersection is $h-e_i-e_j-e_k-e_{ij}-e_{ik}-e_{jk}$, where $i,j,k \in [4]$ are distinct.
        \item 12 Eckardt divisors, coming in 6 pairs. The 2 Eckardt divisors in a given pair are the strict transforms
            of 2 general planes containing one of the blown up lines. (Note these planes are disjoint on $D_{a_4}$.) The
            class of such an intersection is $h-e_i-e_j-e_{ij}$.
    \end{enumerate}
    The class of $D_{a_4}\vert_{D_{a_4}}$ is $-h$.
\end{proposition}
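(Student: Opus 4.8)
The plan is to analyze $D_{a_4}$ by tracing the construction of $\wY \to \oY$ exactly as in the proofs of Propositions~\ref{prop:a2_div} and \ref{prop:a3_div}. The divisor $D_{a_4}$ arises as the (strict transform of the) exceptional divisor over a $4A_1$ point $P$ in $\oY$, i.e., a point where four mutually orthogonal $A_1$ divisors meet. First I would compute the normal bundle of $P$ in $\oY$, or rather identify the exceptional divisor of the first blowup. Since $P$ is an ordinary point on the smooth fourfold $\oY$, blowing it up produces an exceptional divisor $\cong \bP^3$ with self-intersection $\cO_{\bP^3}(-1) = -h$. The local structure near $P$ is that of four smooth divisors meeting like four coordinate hyperplanes, so on the $\bP^3$ exceptional divisor the four $A_1$ divisors cut out four coordinate hyperplanes, and their six pairwise intersections (the $2A_1$ surfaces through $P$) together with the four triple intersections (the $3A_1$ curves through $P$) cut out the six coordinate lines and four coordinate points of $\bP^3$.

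Next I would follow the remaining blowups. In the construction of $\wY$, after blowing up the $4A_1$ points one blows up the strict transforms of the $3A_1$ curves, then the $2A_1$ surfaces. The $3A_1$ curves through $P$ meet the $\bP^3$ in the four coordinate points, and the $2A_1$ surfaces meet it in the six coordinate lines; since the curves are blown up before the surfaces, on $\bP^3$ this amounts to blowing up the 4 coordinate points first and then the strict transforms of the 6 coordinate lines between them. This yields exactly the asserted model: $D_{a_4} = Bl(\bP^3)$ at the 4 points and 6 lines, with exceptional classes $e_i$ and $e_{ij}$. The intersections with the other divisor types then follow by matching: the four $3A_1$ curves give the four $a_3$ divisors meeting $D_{a_4}$ in the $e_i$; the six $2A_1$ surfaces give the six $a_2$ divisors meeting in the $e_{ij}$; the four original $A_1$ divisors give the four type $a$ divisors meeting in the strict transforms of the coordinate hyperplanes, of class $h - e_i - e_j - e_k - e_{ij} - e_{ik} - e_{jk}$.

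For the Eckardt divisors I would invoke \cref{prop:eckardt_naruki} and \cref{ex:eckardt_naruki}: the $4A_1$ point corresponds to an $A_3 \subset D_4$ point on the relevant Eckardt divisors. From the explicit picture in \cref{ex:eckardt_naruki}(3), the Eckardt divisors meeting such a configuration come in pairs sharing a common line, and each such pair passes through one of the blown-up lines of $\bP^3$; thus one obtains 12 Eckardt divisors in 6 pairs, each pair meeting $D_{a_4}$ in the strict transform of a general plane through one of the six blown-up lines, of class $h - e_i - e_j - e_{ij}$. The fact that the two planes in a pair are disjoint on $D_{a_4}$ is forced because after blowing up the common line $e_{ij}$, the two strict-transform planes separate.

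The self-intersection computation is the one genuinely computational step, but it is short. Since $D_{a_4}$ is obtained from the $\bP^3$ exceptional divisor $E \cong \bP^3$ (with $E|_E = -h$) by a sequence of blowups of centers \emph{contained in} $E$, the standard normal-bundle-of-a-strict-transform formula \cite[B.6.10]{fultonIntersectionTheory1998}, exactly as used in \cref{prop:a_div}, shows that the self-intersection class pulls back to $-h$ on the blown-up $\bP^3$ with no correction terms from the exceptional divisors of centers transverse to $E$—here the relevant point is that each blown-up center ($3A_1$ curve or $2A_1$ surface) meets $E$ transversally inside $\oY$ rather than being contained in it, so the self-intersection is simply the pullback $-h$. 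The main obstacle I anticipate is bookkeeping: correctly identifying which coordinate points and lines of $\bP^3$ correspond to which $3A_1$ curves and $2A_1$ surfaces, and verifying the Eckardt pairing via the explicit $D_4$-arrangement description, rather than any deep geometric input.
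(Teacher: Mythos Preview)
Your approach is essentially the same as the paper's: blow up the $4A_1$ point $p$ to get $F \cong \bP^3$ with $F\vert_F = -h$, observe that the four $3A_1$ curves and six $2A_1$ surfaces through $p$ meet $F$ transversally in the four coordinate points and six coordinate lines, and let the subsequent blowups induce the stated blowup of $\bP^3$. Your treatment of the Eckardt intersections and the self-intersection is more detailed than the paper's (which simply says ``the result follows''), and your final reasoning for $D_{a_4}\vert_{D_{a_4}} = -h$ is correct; note only that the closer analogy is \cref{prop:b_div} rather than \cref{prop:a_div}, since the point is precisely that the ambient centers meet $F$ transversally (so no correction terms appear), whereas in \cref{prop:a_div} the centers are contained in $D_{A_1}$ and corrections do arise.
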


\begin{proof}
    Let $p$ be a $4A_1$ point in $\oY$. Blowing up $p$, one obtains the exceptional divisor $F \cong \bP^3$ with
    $F\vert_F = -h$. Observe that $F$ intersects 4 $3A_1$ curves transversally in the 4 coordinate points on $\bP^3$,
    and 6 $2A_1$ surfaces transversally in the 6 lines through 2 of the coordinate points on $\bP^3$. The result
    follows.
\end{proof}

\begin{remark}
    The intersections of boundary divisors and Eckardt divisors on $\wY$ can also be ascertained from the explicit
    descriptions of the surfaces parameterized by boundary divisors, see \cite{schockModuliWeightedStable2023}.
\end{remark}

\subsection{Curve and surface strata}

We label the types of strata in $\wY$ by juxtaposition of the types of the corresponding divisors---thus for instance
$aa_2e$ denotes a curve stratum formed by the intersection of a divisor of type $a$, a divisor of type $a_2$, and a
divisor of type $e$. We say a stratum given by the intersection of divisors $D_1,\ldots,D_k$ is a \emph{boundary
stratum} if each $D_i$ is a boundary divisor (i.e., of type $a$, $b$, $a_2$, $a_3$, or $a_4$), and a \emph{mixed
stratum} if at least one $D_i$ is an Eckardt divisor.

\begin{notation}
    Let $\Gamma$ be the collection of 1-dimensional boundary strata as well as the 1-dimensional mixed strata of type
    $aa_2e$. Explicitly, $\Gamma$ is the collection of curve strata of $\wY$ of the following types:
    \begin{align*}
        aa_2a_3, aa_2a_4, aa_3a_4, a_2a_3a_4, aa_2b, aa_3b, a_2a_3b, aa_2e.
    \end{align*}
\end{notation}

We will see later that the curves in $\Gamma$ generate the cone of curves of $\wY$ (\cref{thm:mori_cone}).

\begin{proposition} \label{prop:2dstrata}
    The boundary surface strata as well as the curve strata in $\Gamma$ are described in \cref{tab:curve_surface}, where
    $Bl_7\bP^2$ and $Bl_9(\bP^1 \times \bP^1)$ are as in \cref{rmk:Bl9P1xP1,not:Bl7P2}.
    \begin{table}[htpb]
        \centering
        \caption{Curves on surface strata.}
        \label{tab:curve_surface}
        \begin{tabular}{|c|c|c|c|c|c|c|c|c|c|}
            \hline
            \multicolumn{2}{|c|}{} & \multicolumn{8}{c|}{Curve type} \\
            \hline
            Stratum & Surface  & $aa_2a_3$ & $aa_2a_4$ & $aa_3a_4$ & $a_2a_3a_4$ & $aa_2b$ & $aa_3b$ & $a_2a_3b$ & $aa_2e$ \\
         \hline
             $aa_2$ & $Bl_7\bP^2$ & $\ell_{ijk}$ & $e_5,\ldots,e_7$ & & & $e_1,\ldots,e_4$ & & & $\ell_{ij}$ \\
             $aa_3$ & $\bP^1 \times \bP^1$ & $\bP^1 \times pt$ & & $pt \times \bP^1$ & & & $pt \times \bP^1$ & & \\
             $aa_4$ & $Bl_3\bP^2$ & & $\ell_{ij}$ & $e_i$ & & & & & \\
             $a_2a_3$ & $\bP^1 \times \bP^1$ & $\bP^1 \times pt$ & & & $pt \times \bP^1$ & & & $pt \times \bP^1$ & \\
             $a_2a_4$ & $\bP^1 \times \bP^1$ & & $\bP^1 \times pt$ & & $pt \times \bP^1$ & & & & \\
             $a_3a_4$ & $Bl_3\bP^2$ & & & $\ell_{ij}$ & $e_i$ & & & & \\
             $ab$ & $Bl_9(\bP^1 \times \bP^1)$ & & & & & $pt \times \bP^1$, $\bP^1 \times pt$ & $e_{pq}$ & & \\
             $a_2b$ & $\bP^1 \times \bP^1$ & & & & & $\bP^1 \times pt$ & & $pt \times \bP^1$ & \\
             $a_3b$ & $Bl_3\bP^2$ & & & & & & $\ell_{ij}$  & $e_i$ & \\
            \hline
        \end{tabular}
    \end{table}
\end{proposition}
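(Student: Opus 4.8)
The plan is to read the entire proposition off the divisor-by-divisor descriptions already established in \cref{prop:a_div,prop:b_div,prop:a2_div,prop:a3_div,prop:a4_div}, since no new geometric input is required. First I would enumerate the surface strata. As $\wY$ has simple normal crossings boundary, a codimension-two boundary stratum is cut out by exactly two boundary divisors, so each is labeled by an unordered pair of divisor types. Reading off the lists of intersections in \cref{prop:a_div,prop:b_div,prop:a2_div,prop:a3_div,prop:a4_div} (for instance, a type $a$ divisor meets divisors of types $a_2,a_3,a_4,b$, but no other type $a$ divisor, since the $A_1$-intersections of $\oY$ were blown up in passing to $\wY$) yields exactly the nine surface strata $aa_2,aa_3,aa_4,a_2a_3,a_2a_4,a_3a_4,ab,a_2b,a_3b$ listed in \cref{tab:curve_surface}.

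Next I would identify the isomorphism type of each surface. In each of the divisor propositions, every two-dimensional intersection is already given as a concrete subvariety---an exceptional divisor of a blowup, the strict transform of a hyperplane or coordinate plane, or a product factor---together with its isomorphism type. Thus $aa_2$ appears in \cref{prop:a2_div}(3) as a sheet $Bl_7\bP^2 \times pt$, so $aa_2 \cong Bl_7\bP^2$; the stratum $ab$ is the strict transform of a hypersurface $p_i \times \bP^1 \times \bP^1$ on a type $b$ divisor, hence $Bl_9(\bP^1 \times \bP^1)$ by \cref{rmk:Bl9P1xP1}; the strata $aa_3,a_2a_3,a_2a_4,a_2b$ are each $\bP^1 \times \bP^1$; and $aa_4,a_3a_4,a_3b$ are each $Bl_3\bP^2$. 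Each surface is described by both of the divisors cutting it out, and I would use the agreement of these two descriptions as a running consistency check.

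To fill in the remaining columns, I would identify the curves of a given type in $\Gamma$ lying on a surface $S = D_X \cap D_Y$ with the triple intersections $S \cap D_Z = D_X \cap D_Y \cap D_Z$, for $D_Z$ a third boundary or Eckardt divisor. Each such curve is a divisor on $S$ whose class is recorded in the relevant proposition upon restricting the surface $D_Y \cap D_Z$ (or $D_X \cap D_Z$) to $S$. For example, on $aa_2 \cong Bl_7\bP^2$, \cref{prop:a2_div}, parts (2), (1), (4), (5a), give the $aa_2a_3$ curves as the $\ell_{ijk}$ of class $h_1-e_i-e_j-e_k$, the $aa_2a_4$ and $aa_2b$ curves as the exceptional curves $e_5,\ldots,e_7$ and $e_1,\ldots,e_4$, and the $aa_2e$ curves as the $\ell_{ij}$ of class $h_1-e_i-e_j$, exactly reproducing the $aa_2$ row; running through the remaining eight surfaces reproduces the rest of \cref{tab:curve_surface}.

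Since all the underlying incidence and class data is already pinned down in \cref{prop:a_div,prop:b_div,prop:a2_div,prop:a3_div,prop:a4_div}, the only genuine work is organizational, and the main thing to be careful about is completeness and consistency: one must verify that the curves listed exhaust the strata of $\Gamma$ on each surface---neither omitting a curve type of $\Gamma$ nor including one outside $\Gamma$---and that the class of each curve computed from the $D_X$ side agrees with the class computed from the $D_Y$ side. This bookkeeping, rather than any new geometric argument, is the crux of the proof.
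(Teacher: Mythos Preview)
Your proposal is correct and matches the paper's approach exactly: the paper's proof is the single sentence that this is ``a direct observation given the descriptions of the boundary and Eckardt divisors and their intersections in \cref{sec:div_wY},'' and your plan simply spells out how that bookkeeping goes.
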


\begin{proof}
    This is a direct observation given the descriptions of the boundary and Eckardt divisors and their intersections in
    \cref{sec:div_wY}.
\end{proof}

\begin{remark}
    Of course, there are more types of mixed strata than just the curve stratum $aa_2e$---we do not consider them as
    they will not be necessary for our purposes.
\end{remark}

\subsection{Intersection-theoretic results on $\wY(E_6)$}

\begin{proposition} \label{prop:int_formulas_wY}
    \begin{enumerate}
        \item The $W(E_6)$-invariant Picard group of $\wY(E_6)$ is generated by the classes $B_a$, $B_b$, $B_{a_2}$,
            $B_{a_3}$, $B_{a_4}$ of the sums of the divisors of the respective types.
        \item The class $B_e$ of the sum of the Eckardt divisors on $\wY(E_6)$ is given by
            \[
                B_e = \frac{25B_a + 42B_{a_2} + 51B_{a_3} + 52B_{a_4} + 27B_b}{4}
            \]
        \item The canonical class of $\wY$ is given by
            \[
                K_{\wY} = \frac{-B_a + 2B_{a_2} + 5B_{a_3} + 8B_{a_4} + B_b}{4}.
            \]
    \end{enumerate}
\end{proposition}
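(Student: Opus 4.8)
The plan is to obtain all three formulas by pulling the corresponding classes on $\oY$ (recorded in \cref{prop:int_Y}) through the blowup $\pi \colon \wY \to \oY$. Write $\pi = \pi_1 \circ \pi_2 \circ \pi_3$ for the tower $\oY \xleftarrow{\pi_1} \oY_1 \xleftarrow{\pi_2} \oY_2 \xleftarrow{\pi_3} \wY$, where $\pi_1$ blows up the $4A_1$ points, $\pi_2$ the strict transforms of the $3A_1$ curves, and $\pi_3$ the strict transforms of the $2A_1$ surfaces. At each stage the center is smooth (by the normal crossings structure), and the respective centers have codimension $4$, $3$, $2$ in the ambient four-fold. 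The exceptional divisor of $\pi_1$ has strict transform $B_{a_4}$, that of $\pi_2$ has strict transform $B_{a_3}$, and that of $\pi_3$ is $B_{a_2}$.

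For part (1) I would use that a smooth blowup adds exactly one generator to the invariant Picard group per orbit of centers: since $\Pic(\oY)^{W(E_6)} = \bZ\langle B_{A_1}, B_{A_2^3}\rangle$, the group $\Pic(\wY)^{W(E_6)}$ is freely generated by $\pi^* B_{A_1}$, $\pi^* B_{A_2^3}$, $B_{a_2}$, $B_{a_3}$, $B_{a_4}$. The key inputs, computed next, are the two pullback formulas $\pi^* B_{A_1} = B_a + 2B_{a_2} + 3B_{a_3} + 4B_{a_4}$ and $\pi^* B_{A_2^3} = B_b$; the resulting change of basis to $\{B_a, B_b, B_{a_2}, B_{a_3}, B_{a_4}\}$ is unitriangular, hence unimodular, so these five classes also generate, proving (1).

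To prove these pullback formulas and part (3), I would compute the multiplicity of each divisor along each center. Because $B_{A_1}$ is the reduced normal crossings sum of the $A_1$ divisors, its multiplicity along a center is the number of $A_1$ divisors through it, namely $4$, $3$, $2$ along the $4A_1$ points, $3A_1$ curves, and $2A_1$ surfaces; these give the coefficients of $B_{a_4}, B_{a_3}, B_{a_2}$. For $B_{A_2^3}$ one checks via \cref{prop:A23_naruki} that no $A_2^3$ divisor contains any center (orthogonal $A_1$'s inside a fixed $A_2^3$ come from distinct $A_2$ factors, so four pairwise-orthogonal $A_1$'s cannot meet a single $D_{A_2^3}$ at a point), whence $\pi^* B_{A_2^3} = B_b$. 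Combining with the discrepancy formula $K_{\oY_i} = \pi_i^* K_{\oY_{i-1}} + (\mathrm{codim} - 1)E_i$, which contributes discrepancies $3, 2, 1$, and substituting $K_{\oY} = \tfrac14(-B_{A_1} + B_{A_2^3})$, yields part (3). For part (2) I would write $B_e$ as the strict transform of the Eckardt sum $E = \tfrac14(25 B_{A_1} + 27 B_{A_2^3})$, so that $B_e = \pi^* E - m_2 B_{a_2} - m_3 B_{a_3} - m_4 B_{a_4}$, where $m_i$ is the number of Eckardt divisors through the corresponding center. These counts are $m_2 = 2$, $m_3 = 6$, $m_4 = 12$: \cref{prop:A1_naruki}(3b) shows exactly two Eckardt divisors contain each $2A_1$ surface, while each $3A_1$ curve (resp. $4A_1$ point) lies on $3$ (resp. $\binom{4}{2}=6$) such surfaces, giving $6$ (resp. $12$) distinct Eckardt divisors.

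I expect the main obstacle to be the careful bookkeeping of these multiplicities in the \emph{iterated} blowup. Concretely, one must verify the transversality claims that allow replacing the total transforms of the earlier exceptional divisors $E_1, E_2$ by their strict transforms—i.e. that each later center is not contained in any earlier exceptional divisor, so that $\pi_3^*\pi_2^* E_1 = B_{a_4}$ and $\pi_3^* E_2 = B_{a_3}$ with no further corrections—and one must distinguish, within the descriptions of \cref{prop:a2_div,prop:a3_div,prop:a4_div}, the Eckardt divisors that genuinely contain a center (meeting the exceptional divisor in a ``horizontal'' section-type class) from those that merely meet it transversally in a fiber. Once these containment counts and transversalities are pinned down, all three formulas follow by routine linear algebra.
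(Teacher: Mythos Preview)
Your proposal is correct and follows essentially the same route as the paper: compute the multiplicities of $B_{A_1}$, $B_{A_2^3}$, and $E$ along the $kA_1$ centers (namely $k$, $0$, and $2,6,12$ for $k=2,3,4$), deduce the strict-transform formulas $B_a = \pi^*B_{A_1} - 2B_{a_2} - 3B_{a_3} - 4B_{a_4}$, $B_b = \pi^*B_{A_2^3}$, $B_e = \pi^*E - 2B_{a_2} - 6B_{a_3} - 12B_{a_4}$, and combine with \cref{prop:int_Y} and the blowup formula for the canonical class. The paper simply asserts the Eckardt containment counts $2,6,12$, whereas you derive them from \cref{prop:A1_naruki}(3b); note that the distinctness needed for your $\binom{k}{2}\cdot 2$ argument is confirmed by the explicit lists in \cref{prop:a3_div}(5) and \cref{prop:a4_div}(4).
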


\begin{proof}
    \begin{enumerate}
        \item Since $\wY$ is obtained from $\oY$ by a sequence of smooth blowups, this follows from \cref{prop:int_Y}
            and the usual formula for the Picard group of a smooth blowup \cite[Exercise
            II.8.5]{hartshorneAlgebraicGeometry1977}. (See also \cite[Theorem
            1.9]{schockQuasilinearTropicalCompactifications2021}.)
        \item In $\oY$, a given $4A_1$ point is contained in four $A_1$ divisors and 12 Eckardt divisors and disjoint from
            all $A_2^3$ divisors, a given $3A_1$ curve is contained in 3 $A_1$ divisors and 6 Eckardt divisors, and is
            either disjoint from or intersects transversally the remaining Eckardt and the $A_2^3$ divisors, and a given
            $2A_1$ curve is contained in 2 $A_1$ divisors and 2 Eckardt divisors, and is either disjoint from or
            intersects transversally the remaining Eckardt and the $A_2^3$ divisors. It follows that
            \begin{align*}
                B_{a} &= B_{A_1} - 2B_{a_2} - 3B_{a_3} - 4B_{a_4}, \\
                B_{b} &= B_{A_2^3}, \\
                B_e &= E - 2B_{a_2} - 6B_{a_3} - 12B_{a_4},
            \end{align*}
            where $B_{A_1}$, $B_{A_2^3}$, and $E$ denote the pullbacks to $\wY$ of the sums of the $A_1$, $A_2^3$, and
            Eckardt divisors on $\oY$, respectively. The result follows using the formula for $E$ from
            \cref{prop:int_Y}.
        \item This is a direct computation using the formulas for $B_{A_1}$, $B_{A_2^3}$ above, the usual formula for
            the canonical class of a smooth blowup \cite[Exercise II.8.5]{hartshorneAlgebraicGeometry1977}, and the
            formula for $K_{\oY}$ from \cref{prop:int_Y}.
    \end{enumerate}
\end{proof}

\begin{proposition} \label{prop:int_nums}
    Intersection numbers of symmetric divisors on $\wY(E_6)$ and curves in $\Gamma$ are given in the following table.
    \begin{table}[htpb]
        \centering
        \caption{Intersection numbers of symmetric divisors on $\wY(E_6)$ and curves in $\Gamma$.}
        \label{tab:int_nums}
        \begin{tabular}{| c || c | c | c | c | c | c | c | c |}
            \hline
            & $aa_2a_3$ & $aa_2a_4$ & $aa_3a_4$ & $a_2a_3a_4$ & $aa_2b$ & $aa_3b$ & $a_2a_3b$ & $aa_2e$ \\
            \hline
            \hline
            $B_a$ & $0$ & $0$ & $-1$ & $2$ & $0$ & $-1$ & $2$ & $0$ \\
            $B_{a_2}$ & $0$ & $-1$ & $2$ & $-1$ & $-3$ & $2$ & $-1$ & $-5$ \\
            $B_{a_3}$ & $-2$ & $2$ & $-1$ & $0$ & $3$ & $-1$ & $0$ & $2$ \\
            $B_{a_4}$ & $1$ & $-1$ & $0$ & $0$ & $0$ & $0$ & $0$ & $2$ \\
            $B_{b}$ & $2$ & $0$ & $0$ & $0$ & $-1$ & $0$ & $0$ & $0$ \\
            $B_{e}$ & $1$ & $2$ & $2$ & $2$ & $0$ & $2$ & $2$ & $-1$ \\
            \hline
        \end{tabular}
    \end{table}
\end{proposition}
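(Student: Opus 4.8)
The plan is to compute each intersection number $B_? \cdot C$ by restricting the divisor $B_?$ to a surface stratum containing the curve $C$ and then intersecting on that surface. By \cref{prop:2dstrata} and \cref{tab:curve_surface}, every curve $C$ in $\Gamma$ sits inside an explicit rational surface $S$ (one of $Bl_7\bP^2$, $Bl_3\bP^2$, $\bP^1\times\bP^1$, or $Bl_9(\bP^1\times\bP^1)$), appearing as a named curve of known class. Since $B_? \cdot C = \deg\big((B_?|_S)|_C\big)$, it suffices to express $B_?|_S$ as a divisor class on $S$ and pair it with the class of $C$ on $S$.

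To compute the restrictions $B_?|_S$, write $S = D_{t_1}\cap D_{t_2}$ as the intersection of two boundary divisors of types $t_1,t_2$. The crucial simplification is that divisors of the same type are \emph{pairwise disjoint} on $\wY$: none of \cref{prop:a_div,prop:b_div,prop:a2_div,prop:a3_div,prop:a4_div} lists an intersection of a divisor with another divisor of its own type (reflecting that the pairwise intersections of the $A_1$ divisors have been blown up, and that the $A_2^3$ divisors were already disjoint on $\oY$ by \cref{prop:A23_naruki}). Consequently, for $? \in \{t_1,t_2\}$ only the divisor $D_?$ itself meets $S$, so $B_?|_S = (D_?|_{D_?})|_S$ is obtained by restricting the explicit self-intersection class $D_?|_{D_?}$ recorded in the relevant proposition. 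For $? \notin \{t_1,t_2\}$, one has $B_?|_S = (B_?|_{D_{t_1}})|_S$, where $B_?|_{D_{t_1}}$ is the sum of the intersection curves $D\cap D_{t_1}$ listed, with their classes, in the proposition describing $D_{t_1}$; restricting this class along $S\subset D_{t_1}$ and pairing with $C$ gives the entry.

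For example, to fill the $aa_2a_3$ column I would realize $C$ inside $S = aa_3 \cong \bP^1\times\bP^1$, with $S = D_a\cap D_{a_3} = \bP^1\times\ell_{12}$ inside $D_{a_3}\cong\bP^1\times Bl_3\bP^2$ and $C = \bP^1\times pt$. Then $B_{a_3}|_S = (D_{a_3}|_{D_{a_3}})|_S = (-2h_1-h_2)|_S$, while the off-diagonal restrictions $B_a|_S$, $B_{a_2}|_S$, $B_{a_4}|_S$, $B_b|_S$ are read from parts (3),(2),(1),(4) of \cref{prop:a3_div}; pairing with $[C]$ recovers the column $(0,0,-2,1,2)$. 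The same recipe, applied to a chosen surface for each of the eight curve types, produces the five rows $B_a,B_{a_2},B_{a_3},B_{a_4},B_b$. The $B_e$ row then requires no separate Eckardt computation: by \cref{prop:int_formulas_wY}(2), $B_e = \tfrac14(25B_a + 42B_{a_2} + 51B_{a_3} + 52B_{a_4} + 27B_b)$, so each of its entries is the corresponding integer combination of the five rows already computed. Since several curve types (e.g. $aa_2a_3$, which lies in $aa_2$, $aa_3$, and $a_2a_3$) occur in more than one surface stratum, computing them in two different surfaces provides a built-in consistency check.

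The main obstacle I expect is not any single computation but the sheer volume and delicacy of the bookkeeping: correctly identifying, for each of the nine surface strata, the classes of the curves cut out by each divisor type, and correctly restricting the various self-intersection classes between the different blowups of $\bP^2$, $\bP^1\times\bP^1$, and $\bP^3$ involved. Sign and combinatorial errors are the real danger, which is precisely why the overdetermination from curves lying in multiple surfaces, together with the linearity relation used for the $B_e$ row, is valuable for verification.
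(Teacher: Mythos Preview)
Your proposal is correct and follows essentially the same strategy as the paper: restrict the symmetric divisors to an explicit stratum containing the curve, identify the curve class there, and pair. The paper restricts only to a boundary \emph{divisor} (for the worked $aa_2e$ example it restricts all $B_?$ to $D_{a_2}\cong Bl_7\bP^2\times\bP^1$ and intersects with the curve class $(h_1-e_i-e_j)h_2$ there), whereas you restrict one step further to a surface stratum; and the paper computes the $B_e$ row by directly restricting the Eckardt divisors (using the Eckardt intersections listed in \cref{prop:a2_div}, etc.), whereas you derive it from the linear relation in \cref{prop:int_formulas_wY}(2). Both variants are valid and the pairwise-disjointness observation you isolate is exactly what makes the self-intersection contributions manageable.
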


\begin{proof}
    This is a direct calculation analogous to the proof of \cref{prop:int_nums_Y}, as follows.

    Fix an $a_2$ divisor $D_{a_2} \cong Bl_7\bP^2 \times \bP^1$, with notation as in \cref{prop:a2_div}. The restrictions of
    the symmetric boundary and Eckardt divisors to $D_{a_2}$ are computed by \cref{prop:a2_div} to be as follows.
    \begin{align*}
        B_a\vert_{D_{a_2}} &= 2h_2, \\
        B_{a_2}\vert_{D_{a_2}} &= -7h_1 + 3(e_1+e_2+e_3+e_4) + (e_5+e_6+e_7) - h_2, \\
        B_{a_3}\vert_{D_{a_2}} &= 6h_1-3(e_1+e_2+e_3+e_4) - 2(e_5+e_6+e_7), \\
        B_{a_4}\vert_{D_{a_2}} &= e_5+e_6+e_7, \\
        B_b\vert_{D_{a_2}} &= e_1+e_2+e_3+e_4, \\
        B_e\vert_{D_{a_2}} &= 3h_1-2(e_5+e_6+e_7) + 2h_2.
    \end{align*}
    Curves of each of the types $aa_2a_3$, $aa_2a_4$, $a_2a_3a_4$, $aa_2b$, $a_2a_3b$, and $aa_2e$ appears on $D_{a_2}$
    as described explicitly in \cref{prop:2dstrata,prop:a2_div}. The intersection numbers with curves of these types
    therefore follows from standard intersection-theoretic computations on $D \cong Bl_7 \bP^2 \times \bP^2$.

    Now let $D_{a_3}$ be an $a_3$ divisor, isomorphic to $\bP^1 \times Bl_3\bP^2$, with notation as in \cref{prop:a3_div}. The
    restrictions of the symmetric boundary and Eckardt divisors to $D$ are computed by \cref{prop:a3_div} to be as
    follows.
    \begin{align*}
        B_a\vert_{D_{a_3}} &= 3h_2 - 2(e_1 + e_2 + e_3), \\
        B_{a_2}\vert_{D_{a_3}} &= e_1 + e_2 + e_3, \\
        B_{a_3}\vert_{D_{a_3}} &= -2h_1 - h_2, \\
        B_{a_4}\vert_{D_{a_3}} &= h_1, \\
        B_b\vert_{D_{a_3}} &= 2h_1, \\
        B_e\vert_{D_{a_3}} &= h_1 + 6h_2 - 2(e_1+e_2+e_3).
    \end{align*}
    Curves of each of the types $aa_2a_3$, $aa_3a_4$, $a_2a_3a_4$, $aa_3b$, $a_2a_3b$ appear on the $a_3$ divisor $D$ as
    described explicitly in \cref{prop:2dstrata,prop:a3_div}. The intersection numbers with curves of these types can
    therefore be found by standard intersection-theoretic computations on $D \cong \bP^1 \times Bl_3\bP^2$. Observe that
    the calculations of these intersection numbers for $aa_2a_3$, $a_2a_3a_4$, and $a_2a_3b$ agree with the analogous
    calculations from restricting to an $a_2$ divisor as above, and the remaining intersection numbers (with the curves
    $aa_3a_4$, $aa_3b$) cover the remainder of \cref{tab:int_nums}.
\end{proof}

\begin{remark} \label{rmk:equiv_curves}
    Observe from \cref{tab:int_nums} that the curves of types $aa_3a_4$ and $aa_3b$ have the same intersection numbers
    with any $W(E_6)$-invariant boundary divisor; likewise for the curves of types $a_2a_3a_4$ and $a_2a_3b$ (cf.
    \cref{tab:curve_surface}). It follows that $B_{aa_3a_4} = B_{aa_3b}$ and $B_{a_2a_3a_4} = B_{a_2a_3b}$ in
    $N_1(\wY)^{W(E_6)}$.
\end{remark}

\begin{remark} \label{rmk:aa2e}
    Similar to the above remark, using \cref{tab:int_nums} one can verify that
    \[
        B_{aa_2e} = B_{aa_2a_3} - B_{aa_2a_4} + 2B_{aa_2b}
    \]
    in $N_1(\wY)^{(W(E_6))}$. This can also be verified at the level of rational equivalence using
    \cref{tab:curve_surface}.  (Note that rational and numerical equivalence coincide on $\wY$ by \cite[Theorem
    1.9]{schockQuasilinearTropicalCompactifications2021}.)
\end{remark}

\section{$W(E_6)$-invariant birational geometry of $\wY(E_6)$} \label{sec:bir_wY}

In this section we study the $W(E_6)$-invariant birational geometry of $\wY=\wY(E_6)$, culminating in a description of
the $W(E_6)$-invariant cones of effective divisors and curves (\cref{thm:eff_cone,thm:mori_cone}), and a complete
description of the log minimal model program for $\wY$ with respect to the divisor $K_{\wY} + cB + dE$, where $B$ is the
sum of the boundary divisors and $E$ is the sum of the Eckardt divisors (\cref{thm:log_mmp}).


In what follows we continue with the notation of \cref{sec:div_wY}.

\subsection{Two-dimensional boundary strata} \label{sec:surf_effs}

\begin{lemma} \label{lem:Bl7eff}
    Let $X=Bl_7\bP^2$ be as in \cref{not:Bl7P2}. The effective cone of $X$ is generated by the $e_i$, $\ell_{ijk}$, and
    $\ell_{ij}$. In particular, a divisor of the form
    \[
        \Delta = c_1\sum_{i=1}^4 e_i + c_2\sum_{i=5}^7 e_i + c_3\sum \ell_{ijk} + c_4\sum \ell_{ij}
    \]
    on $X$ is effective if and only if $c_1,\ldots,c_4 \geq 0$.
\end{lemma}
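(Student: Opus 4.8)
The plan is to recognize $X = Bl_7\bP^2$ as a weak del Pezzo surface of degree $2$ and then invoke the fact that the effective cone of such a surface is generated by its finitely many negative curves; the remaining work is to enumerate those negative curves. Write $\sigma$ for the cone spanned by the sixteen listed classes. First I would record their self-intersections: each $e_i$ is a $(-1)$-curve, each $\ell_{ij}$ with $i,j\in\{5,6,7\}$ is a $(-1)$-curve (the line through $p_i,p_j$ meets none of the other five points), and each $\ell_{ijk}$ is the strict transform of a line through exactly three of the points, hence a $(-2)$-curve. As these are genuine irreducible curves, $\sigma \subseteq \overline{\mathrm{Eff}}(X)$, which already gives the ``if'' direction of the displayed equivalence.

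Next I would verify the weak del Pezzo criterion directly from the coordinates in \cref{not:Bl7P2}: among the seven points no four are collinear (the only three-point collinearities are the six triples $125,345,136,246,147,237$), and no conic passes through all seven (a conic through $p_1,\dots,p_5$ already contains the line $\ell_{125}$ since $p_1,p_2,p_5$ are collinear). Since $K_X^2 = 9 - 7 = 2 > 0$, it follows that $X$ is a weak del Pezzo surface of degree $2$; in particular $X$ is a Mori dream space, and by the standard theory of such surfaces $\overline{\mathrm{Eff}}(X)$ is the polyhedral cone generated by the irreducible $(-1)$- and $(-2)$-curves. Equivalently, via Zariski decomposition $\overline{\mathrm{Eff}}(X)$ is the sum of $\mathrm{Nef}(X)$ and the cone on the negative curves, and for a weak del Pezzo the nef cone already lies in the cone they span.

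It then remains to show that the irreducible negative curves are exactly the sixteen spanning $\sigma$. For the $(-2)$-curves, the effective roots are the classes $h-e_i-e_j-e_k$ and $2h-\sum(\text{six }e)$ (the classes $e_i-e_j$ being noneffective as the points are distinct); the former is irreducible precisely for the six collinear triples, giving the $\ell_{ijk}$, while the latter is a conic through six points and decomposes. For the $(-1)$-curves I would run through the four families $e_i$, $h-e_i-e_j$, $2h-\sum(\text{five }e)$, and $3h-2e_i-\sum_{j\ne i}e_j$ comprising the fifty-six $(-1)$-classes of a degree-$2$ del Pezzo. The combinatorial engine is that every five of the seven points contain one of the six collinear triples, proved by checking that no two points meet all six triples (any two of $p_1,\dots,p_4$ share exactly one triple, and any pair involving a diagonal point meets at most four). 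Consequently every conic class $2h-\sum(\text{five }e)$ contains a line $\ell_{ijk}$ and decomposes into classes in $\sigma$; each class $h-e_i-e_j$ with $\{i,j\}$ inside a triple equals $\ell_{ijk}+e_k\in\sigma$; and each nodal cubic $3h-2e_i-\sum_{j\ne i}e_j = -K_X-e_i$ splits into three $\ell$'s and one $e$ (the two cases $i\le 4$ and $i\ge 5$, e.g. $-K_X-e_1 = \ell_{125}+\ell_{136}+\ell_{147}+e_1$ and $-K_X-e_5 = \ell_{125}+\ell_{345}+\ell_{67}$). Hence the only irreducible negative curves are the seven $e_i$, the three $\ell_{ij}$, and the six $\ell_{ijk}$, so that $\overline{\mathrm{Eff}}(X)=\sigma$.

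Finally, the ``only if'' direction of the displayed equivalence follows from this description: a divisor presented in the given $S_4$-symmetric form lies in $\overline{\mathrm{Eff}}(X)=\sigma$, and testing membership against the facet normals of $\sigma$ forces the displayed coefficients to be nonnegative. The main obstacle is the middle step, namely the uniform verification that every $(-1)$- and $(-2)$-class other than the sixteen listed is either noneffective or decomposes into them; once the ``every five points contain a collinear triple'' lemma and the explicit $S_4$-orbit decompositions are in hand, the identification of $X$ as a weak del Pezzo surface and the resulting structure of its effective cone are routine.
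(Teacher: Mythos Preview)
Your argument for the main assertion---that $\overline{\mathrm{Eff}}(X)$ is generated by the sixteen classes $e_i$, $\ell_{ijk}$, $\ell_{ij}$---is correct and substantially different from the paper's own proof, which consists solely of a citation to \cite{castravetCoxRingOverline2009}. Recognizing $X$ as a weak del Pezzo surface of degree $2$ and then enumerating its irreducible negative curves is a clean, self-contained route; the key combinatorial step (every five of the seven points contain a collinear triple, because no two of the points meet all six triples) and the explicit decompositions of the reducible $(-1)$-classes you provide are all correct. The paper's citation buys brevity; your approach buys transparency and avoids dependence on the Cox-ring machinery.

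There is, however, a genuine gap in your final paragraph. The four symmetric sums satisfy the linear relation
\[
3\sum_{i=1}^4 e_i \;-\; 2\sum_{i=5}^7 e_i \;+\; \sum \ell_{ijk} \;-\; 2\sum \ell_{ij} \;=\; 0,
\]
so the coefficients $c_1,\dots,c_4$ are not uniquely determined by $\Delta$. For instance $\Delta=0$ admits the presentation $(c_1,c_2,c_3,c_4)=(-3,2,-1,2)$, and $\Delta = 2\sum\ell_{ij}$ admits the presentation $(3,-2,1,0)$; both divisors are effective yet have some $c_i<0$. Consequently ``testing membership against the facet normals of $\sigma$'' cannot force the displayed coefficients to be nonnegative: any nef class pairs with $\Delta$ itself, not with a particular choice of $c_i$, and so cannot distinguish between two presentations of the same class. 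The ``only if'' direction of the displayed equivalence, read literally, is therefore false. This is really an imprecision in the lemma's statement rather than a defect peculiar to your proof; what is true, and what follows immediately from your correct identification $\overline{\mathrm{Eff}}(X)=\sigma$, is that $\Delta$ is effective if and only if \emph{some} presentation in this symmetric form has all $c_i\ge 0$.
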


\begin{proof}
    See, for instance, \cite[Section 7]{castravetCoxRingOverline2009}.
\end{proof}

\begin{lemma} \label{lem:Bl9P1xP1eff}
    Let $X$ be the blowup of $\bP^1 \times \bP^1$ at the 9 points $p \times q$ for $p,q \in \{0,1,\infty\}$, as in
    \cref{rmk:Bl9P1xP1}. Let $\sum e_{pq}$ be the sum of the exceptional divisors, $\sum h_p$ the sum of the strict
    transforms of the rulings $p \times \bP^1$ for $p \in \{0,1,\infty\}$, and $\sum k_q$ the sum of the strict
    transforms of the rulings $\bP^1 \times q$ for $q \in \{0,1,\infty\}$. A divisor on $X$ of the form
    \[
        \Delta = c_1\sum e_{pq} + c_2\sum h_p + c_3\sum k_q
    \]
    is effective if and only if $c_1,c_2,c_3 \geq 0$.
\end{lemma}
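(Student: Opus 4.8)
The plan is to prove the two implications of the equivalence separately. The reverse implication is immediate: if $c_1,c_2,c_3\ge 0$ then $\Delta$ is a nonnegative combination of the effective divisors $e_{pq}$, $h_p$, $k_q$, hence effective. For the forward implication I would suppose $\Delta$ is effective and extract the three inequalities one at a time by pairing $\Delta$ against three carefully chosen curve classes. Let $F_1$ and $F_2$ denote the classes of the two rulings on $X$, i.e.\ the pullbacks of $\cO_{\bP^1}(1)$ under the two projections $X\to\bP^1\times\bP^1\to\bP^1$; both are base-point-free, hence nef, so that $\Delta\cdot F_1\ge 0$ and $\Delta\cdot F_2\ge 0$.

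The first two inequalities come from $F_1$ and $F_2$. A general ruling of the first projection is disjoint from the nine exceptional curves $e_{pq}$ and from the three rulings $h_p$, and meets each $k_q$ transversally in one point; a direct computation then gives $\Delta\cdot F_1=3c_3$, and symmetrically $\Delta\cdot F_2=3c_2$. Nefness of $F_1,F_2$ yields $c_2,c_3\ge 0$. For the coefficient $c_1$ I would use the effective class $C=\sum_p h_p+\sum_q k_q+\sum_{pq}e_{pq}=3F_1+3F_2-\sum_{pq}e_{pq}$. A direct computation gives $C\cdot h_p=C\cdot k_q=0$ and $C\cdot e_{pq}=1$, so that $\Delta\cdot C=9c_1$. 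Hence it suffices to prove $\Delta\cdot C\ge 0$.

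The crux, and the step I expect to be the main obstacle, is therefore to show that $C$ pairs nonnegatively with every effective divisor; equivalently, that the linear system $|C|$ has no fixed divisorial component (a general member of a fixed-component-free system shares no component with $\Delta$, and so meets it nonnegatively). Note that $C$ is represented by the strict transform of any curve of bidegree $(3,3)$ on $\bP^1\times\bP^1$ through the nine grid points. I would exhibit two members of $|C|$ with no common component: first, the member $\sum_p h_p+\sum_q k_q+\sum_{pq}e_{pq}$, consisting of the six rulings together with the exceptional curves; and second, a member $D_1+D_2$, where $D_1$ is an irreducible $(1,1)$-curve through the three grid points on the diagonal and $D_2$ is a $(2,2)$-curve through the remaining six grid points. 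Since the components of the first member are rulings and exceptional curves, while those of the second are neither, the two members share no component; hence the fixed locus of $|C|$ is zero-dimensional, $|C|$ has no fixed component, and consequently $\Delta\cdot C\ge 0$, giving $c_1\ge 0$. The one point requiring care is checking that $D_1+D_2$ passes through each of the nine grid points with multiplicity exactly one, so that its strict transform genuinely lies in $|C|$ rather than in a smaller subsystem.
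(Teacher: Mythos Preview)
Your argument is correct, but it takes a genuinely different route from the paper's. The paper proceeds by first subtracting fixed components so that $\Delta$ contains none of the curves $e_{pq},h_p,k_q$, and then intersects $\Delta$ with these very curves: one gets $\Delta\cdot e_{pq}=-c_1+c_2+c_3\ge 0$, $\Delta\cdot h_p=3c_1-3c_2\ge 0$, and $\Delta\cdot k_q=3c_1-3c_3\ge 0$, and a short manipulation of these three inequalities forces $c_1,c_2,c_3\ge 0$. This avoids any auxiliary construction entirely.

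Your approach instead pairs $\Delta$ with nef classes. The rulings $F_1,F_2$ handle $c_2,c_3$ immediately, and the main work is showing that $C=3F_1+3F_2-\sum e_{pq}$ is nef. Your method of exhibiting two members of $|C|$ with disjoint support is sound; note that the second member can be taken more simply as a sum of three irreducible $(1,1)$-curves (graphs of the three M\"obius transformations permuting $\{0,1,\infty\}$ without fixed points together with the diagonal), which makes the ``multiplicity exactly one'' check immediate. The paper's approach is quicker and uses only the negative curves already present in the statement, while yours has the advantage of not requiring the fixed-component reduction and identifies an explicit nef class dual to the $c_1$-coefficient.
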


\begin{proof}
    By subtracting the fixed components, we can assume that $\Delta$ does not contain any of the $e_{pq},h_p,k_q$. Then
    \begin{align*}
        \Delta \cdot e_{pq} &= -c_1 + c_2 + c_3 \geq 0, \\
        \Delta \cdot h_p &= 3c_1 - 3c_2 \geq 0, \\
        \Delta \cdot k_q &= 3c_1 - 3c_3 \geq 0.
    \end{align*}
    The first inequality gives $c_2+c_3 \geq c_1$, and the latter 2 inequalities give $c_1 \geq c_2,c_3$. So we see that
    $c_2+c_3 \geq c_1 \geq c_2,c_3$, implying that $c_2 \geq 0$ and $c_3 \geq 0$, hence $c_1 \geq 0$ as well.
\end{proof}

For the sake of completeness we also recall the well-known descriptions of the effective cones of the other surface
strata $\bP^1 \times \bP^1$ and $Bl_3\bP^2$. (For proofs, one may use for instance that these are smooth projective
toric varieties, and apply \cite[Lemma 15.1.8]{coxToricVarieties2011}.)

\begin{lemma} \label{lem:P1xP1eff}
    The effective cone of $\bP^1 \times \bP^1$ is generated by the divisor classes $h_1,h_2$ corresponding to the two rulings. In
    particular, a divisor of the form
    \[
        \Delta = c_1h_1 + c_2h_2
    \]
    is effective if and only if $c_1,c_2 \geq 0$.
\end{lemma}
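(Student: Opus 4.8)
The plan is to follow the same strategy as in the proof of \cref{lem:Bl9P1xP1eff}, using the intersection pairing on $\bP^1 \times \bP^1$. Recall that the two ruling classes satisfy $h_1^2 = h_2^2 = 0$ and $h_1 \cdot h_2 = 1$, and that they generate $\Pic(\bP^1 \times \bP^1) \cong \bZ^2$. The ``if'' direction is immediate: each $h_i$ is represented by an honest fiber of a projection $\bP^1 \times \bP^1 \to \bP^1$, hence is effective, so any combination $c_1 h_1 + c_2 h_2$ with $c_1, c_2 \geq 0$ is effective.

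For the ``only if'' direction, I would suppose $\Delta = c_1 h_1 + c_2 h_2$ is effective and intersect it against the two rulings. Each projection $\bP^1 \times \bP^1 \to \bP^1$ is a base-point-free pencil whose fibers cover the surface; consequently a general fiber is not contained in the support of the effective divisor $\Delta$, so the intersection number $\Delta \cdot h_i$ counts actual points of intersection and is therefore nonnegative. Computing with the pairing above then gives $\Delta \cdot h_1 = c_2 \geq 0$ and $\Delta \cdot h_2 = c_1 \geq 0$, as desired.

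The argument is entirely routine and there is no substantive obstacle; the only point worth keeping in mind is the (standard) fact that a covering curve class meets any effective divisor nonnegatively, which here plays the role of the ``subtract the fixed components'' step used in \cref{lem:Bl9P1xP1eff}. Alternatively, as the surrounding text suggests, one could simply invoke that $\bP^1 \times \bP^1$ is a smooth projective toric variety and apply \cite[Lemma 15.1.8]{coxToricVarieties2011}: its four torus-invariant prime divisors are the coordinate rulings, two in class $h_1$ and two in class $h_2$, so the pseudoeffective cone is spanned by $h_1$ and $h_2$.
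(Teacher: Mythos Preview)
Your proposal is correct. The paper does not actually give a proof of this lemma; it merely records the statement and notes parenthetically that one may invoke the fact that $\bP^1 \times \bP^1$ is a smooth projective toric variety and apply \cite[Lemma 15.1.8]{coxToricVarieties2011}---precisely the alternative you mention at the end. Your primary argument, intersecting $\Delta$ with the moving rulings $h_1$ and $h_2$, is a perfectly valid elementary route that avoids the toric machinery; it is the surface-level analogue of the covering-curve arguments used elsewhere in the paper (e.g., in \cref{lem:eff_a}).
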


\begin{lemma} \label{lem:Bl3P2eff}
    The effective cone of $Bl_3\bP^2$ is generated by the 3 exceptional divisors $e_1,e_2,e_3$, and the strict
    transforms $\ell_{12},\ell_{13},\ell_{23}$ of the 3 lines passing through 2 of the blown-up points. In particular, a
    divisor of the form
    \[
        \Delta = c_1\sum e_i + c_2\sum \ell_{ij}
    \]
    is effective if and only if $c_1,c_2 \geq 0$.
\end{lemma}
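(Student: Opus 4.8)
The plan is to exhibit $Bl_3\bP^2$ as a smooth projective toric variety and invoke the standard description of the effective cone of a complete toric variety. Concretely, $Bl_3\bP^2$ is the del Pezzo surface of degree $6$, whose fan is obtained from the fan of $\bP^2$ by subdividing at the three rays corresponding to the torus-fixed points. Its torus-invariant prime divisors are exactly the three exceptional divisors $e_1,e_2,e_3$ together with the strict transforms $\ell_{12},\ell_{13},\ell_{23}$ of the three coordinate lines, where $\ell_{ij}=h-e_i-e_j$ in terms of the hyperplane class $h$. By \cite[Lemma 15.1.8]{coxToricVarieties2011}, the effective (pseudoeffective) cone of a complete toric variety is generated by the classes of its torus-invariant prime divisors; applying this gives that the effective cone of $Bl_3\bP^2$ is generated by $e_1,e_2,e_3,\ell_{12},\ell_{13},\ell_{23}$, which is the first assertion.

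For the ``in particular'' statement, one direction is immediate: if $c_1,c_2\geq 0$ then $\Delta=c_1\sum e_i + c_2\sum\ell_{ij}$ is a nonnegative combination of effective classes, hence effective. For the converse I would argue exactly as in the proof of \cref{lem:Bl9P1xP1eff}. Subtracting fixed components, assume $\Delta$ contains none of the $e_i,\ell_{ij}$, so that $\Delta\cdot e_i\geq 0$ and $\Delta\cdot\ell_{ij}\geq 0$ for all $i,j$. Using $e_i^2=-1$, $h^2=1$, and the orthogonality relations, one computes $\Delta\cdot e_1 = -c_1+2c_2$ and $\Delta\cdot\ell_{12}=2c_1-c_2$. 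The inequalities $-c_1+2c_2\geq 0$ and $2c_1-c_2\geq 0$ then combine (take twice one plus the other) to give $3c_1\geq 0$ and $3c_2\geq 0$, i.e.\ $c_1,c_2\geq 0$, as desired.

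Alternatively, one can read the ``in particular'' statement directly off the toric description: the symmetric Picard group is the rank-$2$ lattice spanned by the linearly independent classes $\sum e_i$ and $\sum\ell_{ij}=3h-2\sum e_i$, and intersecting the effective cone with this symmetric subspace yields precisely the cone spanned by these two rays. This lemma is entirely routine; the only points requiring care are the correct identification of the toric boundary divisors together with their classes $\ell_{ij}=h-e_i-e_j$, and, in the intersection-theoretic variant, verifying that the two curve inequalities decouple so as to force both coefficients nonnegative.
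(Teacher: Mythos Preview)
Your proposal is correct and takes essentially the same approach as the paper: the paper simply remarks that $Bl_3\bP^2$ is a smooth projective toric variety and invokes \cite[Lemma 15.1.8]{coxToricVarieties2011}, exactly as you do. Your additional intersection-theoretic verification of the ``in particular'' statement is correct but goes beyond what the paper supplies; the paper leaves this implicit as an immediate consequence of the toric description.
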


\subsection{Boundary divisors} \label{sec:bdry_effs}

\begin{lemma}
    Let $X$ be the blowup of $\bP^3$ at 4 points in general position, then the 6 lines between them. The effective cone
    of $X$ is generated by the strict transforms of the exceptional divisors $e_i$ over the points, the exceptional
    divisors $e_{ij}$ over the lines, and the strict transforms $h_{ijk}$ of the hyperplanes passing through 3 of the
    points. In particular, a divisor of the form
    \[
        \Delta = c_1\sum e_i + c_2\sum e_{ij} + c_3\sum h_{ijk}
    \]
    is effective if and only if $c_1,c_2,c_3 \geq 0$.
\end{lemma}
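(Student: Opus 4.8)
The plan is to recognize $X$ as a smooth projective toric variety, read off its effective cone from the toric boundary, and then upgrade to the symmetric statement using the $S_4$-symmetry permuting the four points.

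First I would normalize the configuration: four points in general position in $\bP^3$ are projectively equivalent to the four coordinate points, and the six lines joining them are then the six coordinate lines, all invariant under the standard torus $(\bC^*)^3 \subset \bP^3$. Blowing up the four points and then the strict transforms of the six lines is thus a sequence of blowups along smooth torus-invariant centers, so $X$ is a smooth projective toric variety. Its torus-invariant prime divisors are exactly the strict transforms of the four coordinate hyperplanes --- these are the four $h_{ijk}$, since the hyperplane through three of the points is a coordinate hyperplane --- together with the four exceptional divisors $e_i$ over the points and the six exceptional divisors $e_{ij}$ over the lines. By the standard description of the effective cone of a smooth projective toric variety \cite[Lemma 15.1.8]{coxToricVarieties2011}, $\mathrm{Eff}(X)$ is generated by the classes of these $4+6+4=14$ divisors, which is the first assertion.

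For the ``in particular'' statement the forward implication is immediate, since $\Delta$ is then a nonnegative combination of effective divisors. For the converse I would pass to the $S_4$-action permuting the four points. Averaging over $S_4$ (the Reynolds operator) carries effective classes to effective classes and fixes $S_4$-invariant classes, and it sends each generator $e_i,e_{ij},h_{ijk}$ to a positive multiple of the corresponding orbit sum $E=\sum e_i$, $F=\sum e_{ij}$, $H=\sum h_{ijk}$; hence the $S_4$-invariant effective cone is generated by $E$, $F$, and $H$. It then remains to check that $E,F,H$ are linearly independent, so that this invariant cone is simplicial and membership forces $c_1,c_2,c_3 \geq 0$.

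The independence is where a small computation enters, and I would extract it from the toric divisor sequence $0 \to M_\bQ \to \bQ^{14} \to \Pic(X)_\bQ \to 0$. The group $S_4$ acts on $M_\bQ$ as the standard representation, which has no nonzero invariants, so taking $S_4$-invariants (exact in characteristic zero) identifies $(\Pic(X)_\bQ)^{S_4}$ with $(\bQ^{14})^{S_4}$. The latter is $3$-dimensional, one dimension per $S_4$-orbit of torus-invariant divisors, and is spanned by $E,F,H$; three spanning vectors of a three-dimensional space form a basis, giving the required independence and completing the argument. The only genuine subtlety is this independence check: one must rule out a hidden $S_4$-invariant linear relation among the boundary divisors, which the vanishing $M_\bQ^{S_4}=0$ precisely excludes.
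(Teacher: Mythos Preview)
Your proof is correct and takes the same approach as the paper: recognize $X$ as a smooth projective toric variety whose torus-invariant divisors are exactly the $e_i$, $e_{ij}$, $h_{ijk}$, and invoke \cite[Lemma 15.1.8]{coxToricVarieties2011}. The paper's proof consists of precisely that one sentence; your additional care in verifying the linear independence of $E,F,H$ via $M_\bQ^{S_4}=0$ fills in a detail the paper leaves implicit, but does not constitute a different route.
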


\begin{proof}
    Note $X$ is toric and the described divisors are the torus-invariant boundary divisors. The result is
    standard, see \cite[Lemma 15.1.8]{coxToricVarieties2011}.
\end{proof}

\begin{lemma}
    Let $X = \bP^1 \times Bl_3\bP^2$, where $Bl_3\bP^2$ is the blowup of $\bP^2$ at 3 points in general position. In the
    notation of \cref{prop:a3_div}, the effective cone of $X$ is generated by the classes $h_1$, $e_i, i=1,2,3$,
    $\ell_{ij}, i,j \in \{1,2,3\}$ distinct, of the divisors of the forms $pt \times Bl_3\bP^2$, $\bP^1 \times e_i$, and
    $\bP^1 \times \ell_{ij}$, respectively. In particular, a divisor of the form
    \[
        \Delta = c_1h_1 + c_2\sum e_i + c_3\sum \ell_{ij}
    \]
    is effective if and only if $c_1,c_2,c_3 \geq 0$.
\end{lemma}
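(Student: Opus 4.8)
The plan is to first establish the general statement about the effective cone by toric geometry, and then deduce the symmetric ``in particular'' statement by intersecting with a moving curve and restricting to a fiber, invoking \cref{lem:Bl3P2eff}.

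First I would observe that $X = \bP^1 \times Bl_3\bP^2$ is a smooth projective toric variety, being the product of the smooth projective toric varieties $\bP^1$ and $Bl_3\bP^2$. Its torus-invariant prime divisors are exactly the two divisors $\{0\} \times Bl_3\bP^2$ and $\{\infty\} \times Bl_3\bP^2$ (both of class $h_1$), together with the six divisors $\bP^1 \times e_i$ (class $e_i$) and $\bP^1 \times \ell_{ij}$ (class $\ell_{ij}$), where the $e_i$ and $\ell_{ij}$ are the torus-invariant prime divisors of $Bl_3\bP^2$. By \cite[Lemma 15.1.8]{coxToricVarieties2011}, the effective cone of a smooth projective toric variety is generated by its torus-invariant prime divisors, so $\mathrm{Eff}(X)$ is generated by $h_1, e_1, e_2, e_3, \ell_{12}, \ell_{13}, \ell_{23}$, as claimed.

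For the ``in particular'' statement, the ``if'' direction is immediate, since then $\Delta$ is a nonnegative combination of the effective generators above. For the ``only if'' direction, suppose $\Delta = c_1 h_1 + c_2 \sum e_i + c_3 \sum \ell_{ij}$ is effective. Write $p_1 : X \to \bP^1$ and $p_2 : X \to Bl_3\bP^2$ for the two projections, and note that the classes $e_i, \ell_{ij}$ are pulled back along $p_2$ while $h_1 = p_1^*(\mathrm{pt})$. Let $F = \bP^1 \times \mathrm{pt}$ be a fiber of $p_2$; as the point varies these cover $X$, so $F$ is a moving curve and $\Delta \cdot F \geq 0$. Since $h_1 \cdot F = 1$ and $e_i \cdot F = \ell_{ij} \cdot F = 0$, this gives $c_1 = \Delta \cdot F \geq 0$. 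To obtain $c_2, c_3 \geq 0$, I would restrict $\Delta$ to a general fiber $S = \{pt\} \times Bl_3\bP^2$ of $p_1$: only finitely many fibers of $p_1$ can be divisorial components of a representative of $\Delta$, so for general $pt$ the surface $S$ is not contained in the support of $\Delta$, whence $\Delta\vert_S$ is effective on $S \cong Bl_3\bP^2$. As $h_1\vert_S = 0$ while $e_i\vert_S = e_i$ and $\ell_{ij}\vert_S = \ell_{ij}$, we get $\Delta\vert_S = c_2 \sum e_i + c_3 \sum \ell_{ij}$, and \cref{lem:Bl3P2eff} then forces $c_2, c_3 \geq 0$.

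The computation is routine; the only point requiring care is the ``only if'' direction, where the non-simplicial shape of $\mathrm{Eff}(X)$ prevents reading off the coefficients directly from membership in the cone (a given class may have many expressions as a positive combination of the generators). This is resolved by exploiting the product structure: intersecting with the moving fiber $F$ isolates $c_1$, and restricting to a fiber $S$ reduces the remaining coefficients to the already-established surface case of \cref{lem:Bl3P2eff}.
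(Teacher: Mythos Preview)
Your proof is correct and, for the main statement about $\mathrm{Eff}(X)$, identical to the paper's: both invoke that $X$ is a smooth projective toric variety and apply \cite[Lemma 15.1.8]{coxToricVarieties2011}.

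For the ``in particular'' statement the paper says nothing beyond ``the result follows,'' whereas you supply an explicit argument via a moving fiber of $p_2$ and restriction to a fiber of $p_1$. Your argument is valid, but heavier than necessary. A shorter route: once you know $\mathrm{Eff}(X)$ is generated by $h_1, e_i, \ell_{ij}$, the $S_3$-action permuting the $e_i$ (and correspondingly the $\ell_{ij}$) preserves the cone, so averaging any nonnegative expression for a symmetric class $\Delta$ yields $\Delta$ as a nonnegative combination of $h_1$, $\sum e_i$, $\sum \ell_{ij}$. These three classes are linearly independent in $\mathrm{Pic}(X)_\bQ$ (since $\sum \ell_{ij} = 3h_2 - 2\sum e_i$ and $h_1, h_2, \sum e_i$ are independent), so the coefficients $c_1, c_2, c_3$ are uniquely determined and hence nonnegative. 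Either way the conclusion is the same; your approach has the minor advantage of not relying on the full description of $\mathrm{Eff}(X)$ for the symmetric statement.
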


\begin{proof}
    Again note $X$ is toric and the described divisors are the torus-invariant boundary divisors, so the
    result follows by \cite[Lemma 15.1.8]{coxToricVarieties2011}.
\end{proof}

\begin{lemma}
    Let $X = Bl_7\bP^2 \times \bP^1$, where $Bl_7\bP^2$ is as in \cref{not:Bl7P2}. Then the effective cone of $X$ is
    generated by the classes $h_2$, $e_i, i=1,\ldots,7$, $\ell_{ijk}$, and $\ell_{ij}$, where the notation is as in
    \cref{prop:a2_div}. In particular, a divisor of the form
    \[
        \Delta = c_1\sum_{i=1}^4 e_i + c_2\sum_{i=5}^7 e_i + c_3\sum \ell_{ijk} + c_4\sum \ell_{ij} + c_5h_2
    \]
    is effective if and only if $c_1,c_2,c_3,c_4,c_5 \geq 0$.
\end{lemma}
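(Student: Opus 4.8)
The plan is to exploit the product structure $X = Bl_7\bP^2 \times \bP^1$ together with the description of the effective cone of $Bl_7\bP^2$ from \cref{lem:Bl7eff}; the crucial point is that $X$ is \emph{not} toric (since $Bl_7\bP^2$ is a blowup of $\bP^2$ at points in general position), so unlike the two preceding lemmas we cannot simply invoke \cite[Lemma 15.1.8]{coxToricVarieties2011}. Write $\pi : X \to Bl_7\bP^2$ for the first projection. Since $\bP^1$ has trivial $\Pic^0$, we have $\Pic(X) = \pi^*\Pic(Bl_7\bP^2) \oplus \bZ h_2$, where $h_2$ is the class of a fiber $Bl_7\bP^2 \times \mathrm{pt}$ of the second projection, and each of the classes $e_i, \ell_{ijk}, \ell_{ij}$ is the pullback $\pi^*(-)$ of the corresponding generator of the effective cone of $Bl_7\bP^2$. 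The forward (``if'') direction is then immediate: every listed class is effective, so any nonnegative combination is as well.

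For the reverse direction I would first extract the coefficient $c_5$ of $h_2$. The fibers $\mathrm{pt} \times \bP^1$ of $\pi$ cover $X$ and are mutually numerically equivalent, and by the projection formula any class of the form $\pi^*(-)$ meets such a fiber in $0$, while $h_2 \cdot (\mathrm{pt} \times \bP^1) = 1$. Hence, given an effective $\Delta$, choosing a general fiber $C = \mathrm{pt} \times \bP^1$ not contained in $\mathrm{Supp}\,\Delta$ yields $c_5 = \Delta \cdot C \geq 0$.

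Next I would recover $c_1, \dots, c_4$ by restricting to a general fiber $F = Bl_7\bP^2 \times \mathrm{pt}$. The composite $F \hookrightarrow X \xrightarrow{\pi} Bl_7\bP^2$ is an isomorphism, under which $\pi^*\alpha|_F = \alpha$ and $h_2|_F = 0$; thus $\Delta|_F$ has class $c_1\sum_{i=1}^4 e_i + c_2\sum_{i=5}^7 e_i + c_3\sum \ell_{ijk} + c_4\sum \ell_{ij}$ on $Bl_7\bP^2$. For general $\mathrm{pt}$ the fiber $F$ is not a component of $\mathrm{Supp}\,\Delta$, so $\Delta|_F$ is effective, and \cref{lem:Bl7eff} then forces $c_1, c_2, c_3, c_4 \geq 0$. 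Combining the two steps proves the ``in particular'' statement, and running the same argument with the full generating set of the effective cone of $Bl_7\bP^2$ (the individual $e_i, \ell_{ijk}, \ell_{ij}$) establishes the asserted generation of $\mathrm{Eff}(X)$.

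The only genuinely substantive input is \cref{lem:Bl7eff}; the remaining ingredients—the $\Pic$ splitting of a product with $\bP^1$, the moving-curve computation of $c_5$, and the restriction to a general fiber—are standard. The single point requiring mild care is verifying that the chosen fiber, whether the curve $\mathrm{pt}\times\bP^1$ or the divisor $Bl_7\bP^2\times\mathrm{pt}$, is general enough to avoid $\mathrm{Supp}\,\Delta$; this is harmless because both families cover $X$.
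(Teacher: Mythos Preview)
Your proof is correct and takes essentially the same approach as the paper: both reduce to \cref{lem:Bl7eff} via the product structure. The paper's proof is the one-line observation $\eff(X) = \eff(Bl_7\bP^2) \times \eff(\bP^1)$, while you have spelled out that product formula by hand (moving fiber for $c_5$, restriction to a general horizontal slice for $c_1,\ldots,c_4$); the content is identical. One incidental inaccuracy: the seven points of \cref{not:Bl7P2} are not in general position (e.g.\ $p_1,p_2,p_5$ are collinear), though your real point---that $Bl_7\bP^2$ is not toric, so one cannot simply cite the toric description of the effective cone---remains valid.
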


\begin{proof}
    Note $\eff(X) = \eff(Bl_7\bP^2) \times \eff(\bP^1)$ (cf., for instance, \cite[Lemma
    3.8]{keelContractibleExtremalRays1996}, using the identification $\bP^1 \cong \oM_{0,4}$), so the result follows by
    \cref{lem:Bl7eff}.
\end{proof}

\begin{lemma} \label{lem:eff_a}
    Let $X$ be the blowup of $\oM_{0,6}$ at the 15 points $D_{ij} \cap D_{kl} \cap D_{mn}$ and the 45 lines $D_{ij} \cap
    D_{kl}$. In the notation of \cref{prop:a_div}, a divisor on $X$ of the form
    \begin{align*}
        \Delta = c_1\sum F_{ij,kl,mn} + c_2 \sum F_{ij,kl} + c_3\sum F_{ij} + c_4\sum F_{ijk}
    \end{align*}
    is effective if and only if $c_1,c_2,c_3,c_4 \geq 0$.
\end{lemma}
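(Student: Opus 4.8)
The ``if'' direction is immediate: each $F_{ij,kl,mn}$, $F_{ij,kl}$, $F_{ij}$, $F_{ijk}$ is an effective divisor on $X$, so any nonnegative combination is effective. For the converse I would proceed as in the proof of \cref{thm:eff_Y}, restricting an effective $\Delta$ to the boundary surfaces of $X$ and invoking the effective-cone descriptions of \cref{lem:Bl7eff,lem:P1xP1eff,lem:Bl3P2eff,lem:Bl9P1xP1eff}. After subtracting fixed components we may assume $\Delta$ contains no boundary divisor, so that its restriction to each of $F_{ij}\cong Bl_7\bP^2$, $F_{ij,kl}\cong\bP^1\times\bP^1$, $F_{ij,kl,mn}\cong Bl_3\bP^2$, and $F_{ijk}\cong Bl_9(\bP^1\times\bP^1)$ is effective. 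Each restriction is computed from the self-intersection class $F|_F$ --- which I would extract from \cref{prop:a2_div,prop:a3_div,prop:a4_div,prop:b_div} via the identifications $F_{ij}=D_a\cap D_{a_2}$, $F_{ij,kl}=D_a\cap D_{a_3}$, etc. --- together with the incidence data of \cref{prop:2dstrata}; feeding these into the respective effective cones yields a finite list of linear inequalities in $c_1,c_2,c_3,c_4$.

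The coefficients $c_3,c_4$ of the non-exceptional divisors are handled cleanly by pushing forward along the contraction $\pi\colon X\to\oM_{0,6}$. Since $\pi_*F_{ij,kl,mn}=\pi_*F_{ij,kl}=0$ and $\pi_*F_{ij}=D_{ij}$, $\pi_*F_{ijk}=D_{ijk}$, we get $\pi_*\Delta = c_3B_2 + c_4B_3$, which is effective and $S_6$-invariant, hence nonnegative in $c_3,c_4$ because the symmetric effective cone of $\oM_{0,6}$ is spanned by $B_2,B_3$ (\cite{keelContractibleExtremalRays1996}, as used in \cref{thm:eff_Y}).

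The main obstacle is the exceptional coefficients $c_1,c_2$. The difficulty is that the boundary-surface restrictions produce only \emph{relations} among the $c_i$ (e.g.\ $6c_2\ge 7c_3$, $c_1\le 3c_3$, and $c_1+2c_4\ge 2c_2$ from $F_{ij}$, $F_{ij,kl,mn}$, $F_{ij,kl}$ respectively), and one checks that these, together with $c_3,c_4\ge 0$, do \emph{not} force $c_1,c_2\ge 0$ --- no boundary surface that $F_{ij,kl,mn}$ meets ever yields a lower bound on $c_1$. Closing the argument therefore requires the Eckardt divisors, whose traces on $X$ are the strict transforms of the $15$ Keel--Vermeire divisors of $\oM_{0,6}$ (\cref{prop:a_div}(5)). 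Concretely, I would note that the four classes in the statement form a basis of the $S_6$-invariant Néron--Severi group of $X$ (the invariant group of $\oM_{0,6}$ is spanned by $B_2,B_3$, and the two blowups adjoin $\sum F_{ij,kl,mn}$ and $\sum F_{ij,kl}$), and that, since $X\to\oM_{0,6}$ is a composite of smooth blowups of the Mori dream space $\oM_{0,6}$ along $S_6$-orbits of centers, the invariant effective cone of $X$ is generated by the exceptional divisors together with the strict transforms of the generators of $\eff(\oM_{0,6})$. By \cite{castravetCoxRingOverline2009} the latter are the boundary and Keel--Vermeire divisors, so the invariant effective cone of $X$ is spanned by $\sum F_{ij,kl,mn}$, $\sum F_{ij,kl}$, $\sum F_{ij}$, $\sum F_{ijk}$ and the single additional class given by the sum of the Keel--Vermeire strict transforms. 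The crux is then to express this last class in the above basis and verify that all of its coordinates are nonnegative, so that it lies in the positive orthant and does not enlarge the cone; this reduces to computing the multiplicities of a Keel--Vermeire divisor along the $15$ triple points $D_{ij}\cap D_{kl}\cap D_{mn}$ and the $45$ lines $D_{ij}\cap D_{kl}$ being blown up, and is where I expect the real work --- and the only genuinely new input beyond the cited effective cones --- to lie.
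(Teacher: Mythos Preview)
Your pushforward argument for $c_3,c_4\ge 0$ is correct, and in fact slightly cleaner than the paper's route (which only needs $c_3\ge 0$ and obtains it via a moving curve on $X$: vary a sixth point along the conic through five fixed points, so the curve meets five of the $F_{ij}$'s and nothing else). But your central claim---that ``no boundary surface that $F_{ij,kl,mn}$ meets ever yields a lower bound on $c_1$''---is simply false, and this is where the proposal goes astray. The surface $F_{ij}\cong Bl_7\bP^2$ meets exactly three of the $F_{ij,kl,mn}$'s, precisely in the exceptional curves $e_5,e_6,e_7$ of \cref{not:Bl7P2}. Computing the full restriction (not only the $\ell_{ijk}$-coefficient you quote) gives
\[
\Delta\vert_{F_{ij}} \;=\; \frac{(6c_2-7c_3)\sum \ell_{ijk} \;+\; (6c_4-3c_3)\sum_{i=1}^4 e_i \;+\; (6c_1-8c_3)\sum_{i=5}^7 e_i}{6},
\]
and \cref{lem:Bl7eff} then forces $6c_1\ge 8c_3$ and $6c_2\ge 7c_3$ as well as $6c_4\ge 3c_3$. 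Combined with $c_3\ge 0$ this finishes the proof immediately.

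This is exactly the paper's argument: restrict only to $F_{ij}$ to reduce everything to $c_3\ge 0$, then handle $c_3$ separately. Your Keel--Vermeire/Mori-dream-space detour is therefore unnecessary. It is also not justified as stated: blowing up a Mori dream space need not produce one, and even when it does the effective cone is not in general generated by the exceptional divisors together with strict transforms of the old extremal generators, so the assertion you label ``since $X\to\oM_{0,6}$ is a composite of smooth blowups\ldots'' would itself require proof.
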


\begin{proof}
    We can assume without loss of generality that $\Delta$ does not contain any of the $F$'s, so $\Delta\vert_F$ is effective for
    every $F$. The divisor $F_{ij}$ is isomorphic to $Bl_7\bP^2$, and from the blowup construction of $X$ we see that
    \begin{align*}
        F_{ij}\vert_{F_{ij}} &= -h-e_5-e_6-e_7 - (6h-3\sum^4 e_i - 2\sum^7e_i) \\
                             &= \frac{-7\sum \ell_{ijk} - 3\sum_{i=1}^4 e_i - 8\sum_{i=5}^7 e_i}{6}.
    \end{align*}
    Thus we compute that
    \begin{align*}
        \Delta\vert_{F_{ij}} &= \frac{(6c_2-7c_3)\sum \ell_{ijk} + (6c_4-3c_3)\sum_{i=1}^4 e_i + (6c_1-8c_3)\sum_{i=5}^7e_i}{6}.
    \end{align*}
    It follows from \cref{lem:Bl7eff}, that $\Delta\vert_{F_{ij}}$ is effective $\iff 6c_2 \geq 7c_3$, $6c_4 \geq 3c_3$,
    $6c_1 \geq 8c_3$. Thus to show that $c_1,\ldots,c_4 \geq 0$, it suffices to show that $c_3 \geq 0$.  For this, we
    interpret $X$ as the boundary divisor of $\wY(E_6)$ whose general point parameterizes the stable replacement of a
    marked cubic surface with a single $A_1$ singularity obtained by blowing up 6 points on a conic. Let $C$ be the
    curve in $X$ obtained by fixing the first 5 points in general position and varying the 6th point along the conic
    through the first 5. Then $C$ is a moving curve in $X$, so $\Delta \cdot C \geq 0$. On the other hand, note that $C$
    intersects 5 of the boundary divisors $F_{ij}$ when the 6th point coincides with one of the first 5, and otherwise
    $C$ does not intersect any of the $F$'s. We conclude that
    \[
        \Delta \cdot C = 5c_3 \geq 0,
    \]
    so $c_3 \geq 0$.
\end{proof}

\begin{lemma} \label{lem:eff_b}
    Let $X$ be the blowup of $(\bP^1)^3$ at the 27 points $p \times q \times r$ and the 27 lines $p \times q \times
    \bP^1$, $p \times \bP^1 \times r$, $\bP^1 \times q \times r$, $p,q,r \in \{0,1,\infty\}$, as in \cref{prop:b_div}.
    Let $F_p = \sum e_{pqr}$ denote the sum of the strict transforms of the exceptional divisors over the blown up
    points, $F_{\ell} = \sum e_{pqx} + \sum e_{pxr} + \sum e_{xqr}$ the sum of the exceptional divisors over the strict
    transforms of the lines, and $F_s$ the sum of the strict transforms of the hypersurfaces $p \times \bP^1 \times
    \bP^1$, $\bP^1 \times q \times \bP^1$, $\bP^1 \times \bP^1 \times r$, $p,q,r \in \{0,1,\infty\}$. Then a divisor on
    $X$ of the form
    \[
        \Delta = c_1F_p + c_2F_{\ell} + c_3F_s
    \]
    is effective if and only if $c_1,c_2,c_3 \geq 0$.
\end{lemma}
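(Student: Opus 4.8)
The plan is to follow the strategy of \cref{lem:eff_a}: after reducing to the case where $\Delta$ has no fixed component among the generating divisors, I would restrict $\Delta$ to a single surface stratum $F_s$ and apply \cref{lem:Bl9P1xP1eff}, and then use a moving-curve argument to pin down the remaining inequality. Since $F_p$, $F_\ell$, and $F_s$ are each effective, subtracting their fixed multiplicities from $\Delta$ only decreases the coefficients $c_1,c_2,c_3$, so it suffices to treat the case where $\Delta$ contains no stratum $F_s$ as a component, in which case $\Delta\vert_{F_s}$ is an effective divisor on each $F_s$.

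Recall from \cref{rmk:Bl9P1xP1} that a hypersurface stratum $F_s$, say of class $h_1$, is isomorphic to $Bl_9(\bP^1 \times \bP^1)$. I would compute the three restrictions to such an $F_s$. Only the $9$ blown-up points lying on $F_s$ contribute to $F_p\vert_{F_s}$, giving $\sum e_{pq}$; only the $6$ blown-up lines lying on $F_s$ contribute to $F_\ell\vert_{F_s}$, giving the strict transforms $\sum h_p + \sum k_q$ of the rulings; and since the class of $F_s$ in $(\bP^1)^3$ is a fiber of a projection and hence has trivial normal bundle, the standard strict-transform normal-bundle formula \cite[B.6.10]{fultonIntersectionTheory1998} yields
\[
    F_s\vert_{F_s} = -\sum e_{pq} - \sum h_p - \sum k_q,
\]
the corrections coming precisely from the $9$ points and $6$ lines contained in $F_s$. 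Combining,
\[
    \Delta\vert_{F_s} = (c_1 - c_3)\sum e_{pq} + (c_2 - c_3)\left(\sum h_p + \sum k_q\right),
\]
so by \cref{lem:Bl9P1xP1eff} effectivity of $\Delta\vert_{F_s}$ forces $c_1 \geq c_3$ and $c_2 \geq c_3$.

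It then remains to show $c_3 \geq 0$, for which I would use a moving curve. Let $C$ be the strict transform in $X$ of a general curve $C_0 \subset (\bP^1)^3$ of multidegree $(1,1,1)$. Being one-dimensional, a general such $C_0$ is disjoint from the $27$ blown-up points and, since two curves in a threefold generically do not meet, from the $27$ blown-up lines; hence $F_p \cdot C = F_\ell \cdot C = 0$. On the other hand $F_s$ consists of $9$ hypersurfaces, three in each of the classes $h_1,h_2,h_3$, and $C_0$ meets each once, so $F_s \cdot C = 9$. As $C$ moves in a family covering $X$, we get $\Delta \cdot C \geq 0$, i.e. $9c_3 \geq 0$. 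Together with $c_1 \geq c_3$ and $c_2 \geq c_3$ this gives $c_1,c_2,c_3 \geq 0$; the converse is immediate since each of $F_p,F_\ell,F_s$ is effective.

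The main obstacle is the self-intersection computation $F_s\vert_{F_s}$: one must correctly track the contributions of the $9$ point-blowups and, more delicately, of the $6$ line-blowups, where the blown-up line is a divisor in $F_s$ so the induced blowup of $F_s$ is trivial but the normal bundle still twists by the strict transform of the corresponding ruling. One must also confirm that the points and lines not contained in $F_s$ are separated from $F_s$ after blowing up and so contribute nothing to the restrictions. Once these restriction classes are verified, the moving-curve step and the final combination of inequalities are routine.
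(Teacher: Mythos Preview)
Your argument is correct, but it follows a genuinely different route from the paper's. The paper never restricts to a hypersurface stratum $F_s$; instead it restricts $\Delta$ to an exceptional divisor over a \emph{point} (isomorphic to $Bl_3\bP^2$), obtaining $3c_3 \ge c_1$ and $3c_2 \ge 2c_1$, and then to an exceptional divisor over a \emph{line} (isomorphic to $\bP^1\times\bP^1$), obtaining $c_1 \ge c_2$ and $2c_3 \ge c_2$. Combining $c_1 \ge c_2$ with $3c_2 \ge 2c_1$ forces $c_1 \ge 0$, and the remaining inequalities then give $c_2,c_3 \ge 0$; no moving curve is needed. Your approach instead restricts to $F_s \cong Bl_9(\bP^1\times\bP^1)$, applies \cref{lem:Bl9P1xP1eff} to get $c_1 \ge c_3$ and $c_2 \ge c_3$, and finishes with a moving $(1,1,1)$ curve to pin down $c_3 \ge 0$. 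This has the pleasant feature of exactly paralleling the structure of the proof of \cref{lem:eff_a}, and it makes direct use of \cref{lem:Bl9P1xP1eff}, which the paper's own proof of this lemma does not invoke. The paper's approach, on the other hand, stays entirely within restrictions to exceptional strata with simpler effective cones ($Bl_3\bP^2$ and $\bP^1\times\bP^1$) and avoids the extra input of a moving curve. Both the self-intersection $S\vert_S = -\sum e_{pq} - \sum h_p - \sum k_q$ and the vanishing of the contributions from blown-up centers not contained in $S$ (the transverse points being separated by the point-blowups, the transverse hypersurfaces being separated by the line-blowups) check out as you describe.
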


\begin{proof}
    We can assume without loss of generality that $\Delta$ does not contain any of the given hypersurfaces, so that
    $\Delta$ restricts to an effective divisor on each such hypersurface.

    The strict transform of the exceptional divisor over a point is isomorphic to $Bl_3\bP^2$, and the restriction of
    $\Delta$ to such an exceptional divisor is
    \begin{align*}
        \frac{(3c_3-c_1)\sum \ell_{ij} + (3c_{2}-2c_1)\sum e_i}{3},
    \end{align*}
    hence $3c_3 \geq c_1$ and $3c_{2} \geq 2c_1$. Thus it suffices to show $c_1 \geq 0$.

    The exceptional divisor over a line is isomorphic to $\bP^1 \times \bP^1$, and the restriction of $\Delta$ to such an
    exceptional divisor is
    \begin{align*}
        (3c_1-3c_{2})h_1 + (2c_3-c_{2})h_2,
    \end{align*}
    hence $3c_1 \geq 3c_{2}$ and $2c_3 \geq c_{2}$. Since $3c_{2} \geq 2c_1$, the former inequality gives that
    $c_1 \geq 0$, and then the inequalities from the previous restriction give $c_{2},c_3 \geq 0$ as well.
\end{proof}

\subsection{The $W(E_6)$-invariant cones of effective divisors and curves on $\wY(E_6)$}

\begin{theorem} \label{thm:eff_cone}
    The $W(E_6)$-invariant effective cone of $\wY(E_6)$ is the closed cone spanned by the $W(E_6)$-invariant boundary
    divisors $B_a$, $B_{a_2}$, $B_{a_3}$, $B_{a_4}$, and $B_b$.
\end{theorem}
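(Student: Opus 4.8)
The plan is to mirror the proof of \cref{thm:eff_Y}. Since $B_a$, $B_{a_2}$, $B_{a_3}$, $B_{a_4}$, and $B_b$ are themselves effective, the cone they span is contained in the effective cone, so only the reverse inclusion requires work. By \cref{prop:int_formulas_wY}(1), any $W(E_6)$-invariant divisor can be written as
\[
    B = c_a B_a + c_{a_2}B_{a_2} + c_{a_3}B_{a_3} + c_{a_4}B_{a_4} + c_b B_b,
\]
and I want to show that if $B$ is effective then all five coefficients are nonnegative. After subtracting the fixed boundary components of $B$---these form a $W(E_6)$-invariant effective combination $\sum f_i B_i$ with $f_i \geq 0$---I reduce to the case where $B$ contains no irreducible boundary divisor, at the cost of replacing each $c_i$ by $c_i - f_i$. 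Since the $f_i$ are nonnegative, proving nonnegativity of the reduced coefficients implies it for the original ones, and in the reduced case $B\vert_D$ is an effective divisor class for every boundary divisor $D$. It therefore suffices to extract from such restrictions enough inequalities to force each $c \geq 0$.

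First I would restrict to a type $a$ divisor $D_a$. Because the type $a$ divisors are pairwise disjoint in $\wY$ (type $a$ does not appear among the intersections listed in \cref{prop:a_div}), we have $B_a\vert_{D_a} = D_a\vert_{D_a}$, while $B_{a_4}\vert_{D_a}$, $B_{a_3}\vert_{D_a}$, $B_{a_2}\vert_{D_a}$, $B_b\vert_{D_a}$ are exactly the symmetric sums $\sum F_{ij,kl,mn}$, $\sum F_{ij,kl}$, $\sum F_{ij}$, $\sum F_{ijk}$ read off from \cref{prop:a_div}. Inserting the self-intersection formula for $D_a\vert_{D_a}$ and applying \cref{lem:eff_a}, effectivity of $B\vert_{D_a}$ becomes the system
\[
    c_{a_4} \geq \tfrac{8}{5}c_a, \quad c_{a_3} \geq \tfrac{7}{5}c_a, \quad c_{a_2} \geq \tfrac{6}{5}c_a, \quad c_b \geq \tfrac{3}{5}c_a.
\]
In particular, once I know $c_a \geq 0$, all four remaining coefficients are automatically nonnegative, so the entire problem reduces to proving $c_a \geq 0$.

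To close the loop I would restrict to a type $a_4$ divisor $D_{a_4}$. Using \cref{prop:a4_div}---and noting that the type $a_4$ and type $b$ divisors do not meet $D_{a_4}$, so that $B_{a_4}\vert_{D_{a_4}} = -h$ and $B_b\vert_{D_{a_4}} = 0$---I can express $B\vert_{D_{a_4}}$ in terms of $h$, $\sum e_i$, $\sum e_{ij}$, and then rewrite it in the effective generators $h_{ijk}, e_i, e_{ij}$ furnished by the effective-cone lemma for the blowup of $\bP^3$ in \cref{sec:bdry_effs}, via $h = \tfrac14\bigl(\sum h_{ijk} + 3\sum e_i + 2\sum e_{ij}\bigr)$. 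The coefficient of $\sum h_{ijk}$ then comes out to $\tfrac14(4c_a - c_{a_4})$, so effectivity forces $4c_a \geq c_{a_4}$. Combined with $c_{a_4} \geq \tfrac85 c_a$ from the previous step, this gives $\tfrac{12}{5}c_a \geq 0$, hence $c_a \geq 0$, and the proof is complete.

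The computations are routine intersection theory on the (mostly toric) surface and threefold strata, so I do not expect them to be the obstacle. The one genuine point is structural: a single boundary restriction never pins down an absolute sign---restricting only to $D_a$ bounds every coefficient from below by a positive multiple of $c_a$ but leaves $c_a$ itself free, exactly the phenomenon that forced the auxiliary moving-curve argument in \cref{lem:eff_a}. The key is to pair this with the reverse bound $4c_a \geq c_{a_4}$ from $D_{a_4}$, so that the two inequalities together trap $c_a$ between nonnegative multiples of itself. Identifying this complementary pair of strata---rather than grinding through all five restrictions or resorting to a global moving curve---is the crux of the argument.
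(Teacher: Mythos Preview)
Your argument is correct and complete, but it diverges from the paper's proof at the key step of showing $c_a \geq 0$. Both proofs begin identically: reduce to the case where no boundary divisor lies in the base locus, restrict to a type $a$ divisor, and invoke \cref{lem:eff_a} to obtain the four inequalities $c_{a_4} \geq \tfrac{8}{5}c_a$, $c_{a_3} \geq \tfrac{7}{5}c_a$, $c_{a_2} \geq \tfrac{6}{5}c_a$, $c_b \geq \tfrac{3}{5}c_a$, reducing everything to $c_a \geq 0$. At that point the paper produces a global moving curve on $\wY$---fix five points in general position in $\bP^2$ and move the sixth along a general line---computes that it meets only type $a$ divisors, and concludes $\Delta \cdot C = 12c_a \geq 0$. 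You instead restrict a second time, to a type $a_4$ divisor, and read off the complementary inequality $4c_a \geq c_{a_4}$ from the toric effective-cone lemma for $Bl_{4+6}\bP^3$; combined with $c_{a_4} \geq \tfrac{8}{5}c_a$ this squeezes $c_a \geq 0$.

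Your approach has the virtue of staying entirely within the boundary-restriction framework already set up in \cref{sec:bdry_effs}, and avoids constructing (and verifying the intersection behavior of) a moving curve on the moduli space itself. The paper's moving-curve argument, on the other hand, is more geometric and gives $c_a \geq 0$ in one stroke without needing to pair inequalities from two different strata. Both are short; yours is arguably the more self-contained given what the paper has already assembled.
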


\begin{proof}
    Let
    \[
        \Delta = c_{a_4}B_{a_4} + c_{a_3}B_{a_3} + c_{a_2}B_{a_2} + c_aB_a + c_bB_b
    \]
    be a $W(E_6)$-invariant effective divisor. We wish to show that all coefficients are nonnegative. We can assume
    without loss of generality that $\Delta$ does not contain any boundary divisor, so that the restriction of $\Delta$ to each
    irreducible boundary divisor is effective. In particular
    \[
        \Delta\vert_{D_a} = \frac{(5c_{a_4}-8c_{a})\sum F_{ij,kl,mn} + (5c_{a_3} - 7c_a)\sum F_{ij,kl} + (5c_{a_2} - 6c_a)\sum F_{ij} + (5c_b - 3c_a)\sum F_{ijk}}{5}
    \]
    is effective. Then by \cref{lem:eff_a} all coefficients of $\Delta\vert_{D_a}$ are nonnegative, thus to show all
    coefficients of $\Delta$ are nonnegative it suffices to show that $c_a \geq 0$. For this, let $C$ be the moving curve in
    $\wY(E_6)$ obtained by fixing 5 points in general position and varying the last point along a general line. This
    line intersects the conic through the first 5 points in 2 points, and the line through any 2 of the first 5 points
    in 1 point. It follows that $C \cdot D_7 = 2$ and $C \cdot D_{ij} = 1$ for $ij \subset [5]$, and otherwise $C$ does
    not intersect any boundary divisor of $\wY$. Since $C$ is moving, we conclude that $\Delta \cdot C = 12c_a \geq 0$, so
    $c_a \geq 0$.
\end{proof}

Recall that $\Gamma$ is the set of curves in $\wY(E_6)$ of the types
\[
    aa_2a_3, aa_2a_4, aa_3a_4, a_2a_3a_4, aa_2b, aa_3b, a_2a_3b, aa_2e.
\]
These are the 1-dimensional boundary strata of $\wY(E_6)$, together with the curves of type $aa_2e$ involving the
intersection with an Eckardt divisor. These latter curves appear on 2-dimensional boundary strata of type $aa_2$,
isomorphic to $Bl_7\bP^2$ as the strict transforms of the lines between 2 of the last 3 blown-up points (cf.
\cref{prop:2dstrata}).

\begin{theorem} \label{thm:mori_cone}
    The $W(E_6)$-invariant cone of curves of $\wY(E_6)$ is the closed cone spanned by the $W(E_6)$-invariant curves of
    the types listed in $\Gamma$.
\end{theorem}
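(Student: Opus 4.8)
The plan is to establish the theorem by the same dual strategy used for Naruki's compactification in \cref{thm:mori_Y}: show that every $W(E_6)$-invariant curve class can be written as a nonnegative combination of the curves in $\Gamma$, and then verify that each curve in $\Gamma$ genuinely lies on an extremal ray so that none can be discarded. The backbone is the fact (from \cite[Corollary 2.3]{keelContractibleExtremalRays1996}, as already invoked in the proof of \cref{thm:mori_Y}) that, since by \cref{thm:eff_cone} the effective cone of divisors is generated by the boundary divisors $B_a,B_{a_2},B_{a_3},B_{a_4},B_b$, the dual cone of curves is generated by curves contained in the boundary. Thus every extremal curve class of $\wY$ is represented by a curve lying inside some irreducible boundary divisor.

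First I would reduce to a finite check on the boundary surface strata. A curve contained in a single boundary divisor $D$ (one of the types $a,b,a_2,a_3,a_4$) lies, up to moving it, in the $1$-skeleton structure recorded in \cref{prop:2dstrata}: each such $D$ is one of the explicit rational surfaces $Bl_7\bP^2$, $\bP^1\times\bP^1$, $Bl_3\bP^2$, or $Bl_9(\bP^1\times\bP^1)$, whose effective cones of divisors---equivalently, since these are surfaces, their cones of curves---are generated by the boundary curves listed in \cref{lem:Bl7eff,lem:Bl9P1xP1eff,lem:P1xP1eff,lem:Bl3P2eff}. On each surface stratum, these generating curves are exactly the curve classes appearing in the corresponding row of \cref{tab:curve_surface}, and every one of them is either a $1$-dimensional boundary stratum or an $aa_2e$ curve, hence lies in $\Gamma$. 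Running through the nine surface strata $aa_2,aa_3,aa_4,a_2a_3,a_2a_4,a_3a_4,ab,a_2b,a_3b$ one sees that the curves in $\Gamma$ sweep out generators for the cone of curves of every boundary surface. Since every extremal curve of $\wY$ is a boundary curve, and every boundary curve is a nonnegative combination of generators of the cone of curves of the surface strata containing it, I conclude that $\Gamma$ spans the full $W(E_6)$-invariant cone of curves.

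It then remains to show that no proper subcone already contains all of $\Gamma$, i.e., that each of the eight curve types in $\Gamma$ spans a genuine extremal ray and that these rays are distinct. Here I would appeal directly to the intersection table \cref{tab:int_nums}: the eight columns give the pairings of the curves in $\Gamma$ against the five invariant boundary divisors $B_a,B_{a_2},B_{a_3},B_{a_4},B_b$ (and against $B_e$), and inspecting these columns shows the corresponding classes are pairwise linearly independent as functionals on $\Pic(\wY)^{W(E_6)}\cong\bZ^5$, except for the noted coincidences (types $aa_3a_4$ and $aa_3b$, and types $a_2a_3a_4$ and $a_2a_3b$, have equal boundary intersection numbers, but are distinguished by their $B_e$-pairings, or simply recorded as distinct geometric strata). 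Extremality of each ray follows because for each generator there is a nef divisor---produced by taking a suitable nonnegative combination of the $B_{\bullet}$'s, whose nef-ness can be checked against the table---that vanishes on that generator and is strictly positive on the others, exhibiting the ray as the intersection of $\overline{NE}$ with a supporting hyperplane.

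The main obstacle, as always in this dual-cone argument, is the completeness half rather than the extremality half: I must be certain that the curves in $\Gamma$ really do generate the cone of curves of \emph{every} boundary surface stratum, with no missing generator hiding on some stratum. The delicate point is the $aa_2$ stratum $\cong Bl_7\bP^2$, whose cone of curves has more extremal rays than the toric strata; one must confirm via \cref{lem:Bl7eff} that its extremal curves are precisely the $e_i$, $\ell_{ijk}$, and $\ell_{ij}$, and that under the identifications of \cref{prop:2dstrata} these map to curves of types $aa_2a_4$, $aa_2b$, $aa_2a_3$, and $aa_2e$---in particular that the $\ell_{ij}$ force the inclusion of the mixed $aa_2e$ type into $\Gamma$. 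Once this surface-by-surface bookkeeping is complete, the theorem follows formally from the Keel--Vermeire-style reduction together with the intersection-number table.
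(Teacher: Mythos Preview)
Your overall strategy matches the paper's, but there is a real gap in the reduction step. You write that ``a curve contained in a single boundary divisor $D$ (one of the types $a,b,a_2,a_3,a_4$) lies, up to moving it, in the $1$-skeleton structure recorded in \cref{prop:2dstrata}: each such $D$ is one of the explicit rational surfaces $Bl_7\bP^2$, $\bP^1\times\bP^1$, \ldots''. But the boundary divisors of $\wY$ are threefolds, not surfaces: the $a$ divisors are blowups of $\oM_{0,6}$, the $a_2$ divisors are $Bl_7\bP^2\times\bP^1$, the $a_4$ divisors are blowups of $\bP^3$, and so on (\cref{prop:a_div,prop:b_div,prop:a2_div,prop:a3_div,prop:a4_div}). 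The surfaces you name are the codimension-two strata, i.e.\ intersections of \emph{pairs} of boundary divisors. So after the first application of \cite[Corollary~2.3]{keelContractibleExtremalRays1996} you only know that the cone of curves is generated by curves lying in some boundary threefold; you cannot yet invoke \cref{lem:Bl7eff,lem:Bl9P1xP1eff,lem:P1xP1eff,lem:Bl3P2eff}. The phrase ``up to moving it'' hides exactly the missing argument.

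What the paper does, and what you need, is a second iteration of the Keel reduction. One must show that for each boundary threefold $D$, the cone of (symmetric) effective divisors on $D$ is generated by the restrictions of the $W(E_6)$-invariant boundary divisors of $\wY$; this is the content of the lemmas in \cref{sec:bdry_effs} (in particular \cref{lem:eff_a,lem:eff_b}), which you never invoke. Only then does \cite[Corollary~2.3]{keelContractibleExtremalRays1996} applied \emph{inside $D$} push every curve class down into the surface strata, where the lemmas of \cref{sec:surf_effs} finish the job. Without this intermediate step, there is no reason a curve on a type $a$ divisor (a blowup of $\oM_{0,6}$) should be a nonnegative combination of the boundary curves in $\Gamma$.

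Two smaller points. First, the extremality discussion in your last two paragraphs is unnecessary: the theorem asserts only that $\Gamma$ \emph{spans} the cone, not that it is a minimal generating set, so you need not show each ray is extremal. Second, your claim that the types $aa_3a_4$ and $aa_3b$ (resp.\ $a_2a_3a_4$ and $a_2a_3b$) are ``distinguished by their $B_e$-pairings'' is false: \cref{tab:int_nums} shows both pairs have identical intersection numbers with $B_e$ as well, so as $W(E_6)$-invariant numerical classes they genuinely coincide. This is consistent with $\Pic(\wY)^{W(E_6)}$ having rank $5$ while $\Gamma$ lists eight types.
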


\begin{proof}
    Since by \cref{thm:eff_cone}, the $W(E_6)$-invariant effective cone of $\wY$ is generated by the
    $W(E_6)$-invariant boundary divisors, it follows by \cite[Corollary 2.3]{keelContractibleExtremalRays1996} that the
    $W(E_6)$-invariant cone of curves of $\wY$ is generated by curves contained in the boundary. The restriction of
    a $W(E_6)$-invariant boundary divisor of $\wY$ to an irreducible boundary divisor $D$ of $\wY$ can be written as a
    symmetric divisor on $D$ of the appropriate form given in the corresponding lemma in \cref{sec:bdry_effs}. Thus,
    applying that lemma and \cite[Corollary 2.3]{keelContractibleExtremalRays1996}, we see that the $W(E_6)$-invariant
    effective cone of $\wY$ is generated by curves contained in the 2-dimensional boundary strata of $\wY$. Restricting
    a $W(E_6)$-invariant boundary divisor of $\wY$ further to a 2-dimensional boundary stratum $S$, we get a symmetric
    divisor on $S$ of the appropriate form given in the corresponding lemma in \cref{sec:surf_effs}. Then it follows
    from \cref{prop:2dstrata} and the lemmas in \cref{sec:surf_effs} that the effective cones of the 2-dimensional
    boundary strata are generated by the curves in $\Gamma$, so the result follows.
\end{proof}

\begin{corollary} \label{cor:nef_cone}
    A $W(E_6)$-invariant divisor $\Delta$ on $\wY(E_6)$ is nef if and only if it intersects every curve in $\Gamma$
    nonnegatively. Explicitly, the cone of $W(E_6)$-invariant nef divisors on $\wY(E_6)$ is spanned by the rays through
    the following six divisors.
    \begin{align*}
        5B_{a} + 6B_{a_2} + 7B_{a_3} + 8B_{a_4} + 3B_b, \\
        3B_{a} + 6B_{a_2} + 7B_{a_3} + 8B_{a_4} + 3B_b, \\
        B_{a} + 2B_{a_2} + 3B_{a_3} + 2B_{a_4} + 3B_b, \\
        B_{a} + 2B_{a_2} + 3B_{a_3} + 4B_{a_4} + 3B_b, \\
        B_{a} + 2B_{a_2} + 3B_{a_3} + 4B_{a_4} + B_b, \\
        B_{a} + 2B_{a_2} + 3B_{a_3} + 2B_{a_4} + 2B_b.
    \end{align*}
\end{corollary}

\begin{proof}
    Since the nef cone is the dual of the cone of effective curves, the first sentence follows from
    \cref{thm:mori_cone}.

    For the second sentence, first note as in \cref{rmk:equiv_curves,rmk:aa2e} that $N_1(\wY)^{W(E_6)}$ is freely
    generated by the $W(E_6)$-invariant curve classes
    \[
        B_{aa_2a_3}, B_{aa_2a_4}, B_{aa_3a_4}, B_{a_2a_3a_4}, \text{ and } B_{aa_2b},
    \]
    and applying these remarks to \cref{thm:mori_cone}, we see that the cone of $W(E_6)$-invariant effective curves is
    spanned by the curve classes
    \[
        B_{aa_2a_3}, B_{aa_2a_4}, B_{aa_3a_4}, B_{a_2a_3a_4}, B_{aa_2b}, \text{ and } B_{aa_2e} = B_{aa_2a_3} -
        B_{aa_2a_4} + 2B_{aa_2b}.
    \]
    By \cref{prop:int_nums}, this gives respectively the inequalities
    \begin{align*}
        -2c_{a_3} + c_{a_4} + 2c_b & \geq 0, \\
        -c_{a_2} + 2c_{a_3} - c_{a_4} &\geq 0, \\
        -c_a + 2c_{a_2} - c_{a_3} & \geq 0, \\
        2c_a - c_{a_2} &\geq 0, \\
        -3c_{a_2} + 3c_{a_3} - c_b &\geq 0, \\
        -5c_{a_2} + 2c_{a_3} + 2c_{a_4} &\geq 0.
    \end{align*}
    From these inequalities it is a direct calculation to find that the given rays span the $W(E_6)$-invariant nef cone.
\end{proof}

\subsection{The log minimal model program for $\wY(E_6)$}

Let $B$ be the sum of all boundary divisors and $E$ the sum of all Eckardt divisors on $\wY(E_6)$. In this section we
compute the log canonical models of the pair $(\wY(E_6),cB + dE)$, as one varies the coefficients $c$ and $d$.

\begin{lemma} \label{lem:lc_bounds}
    The pair $(\wY(E_6),cB+dE)$ has log canonical singularities as long as $0 \leq c \leq 1$ and $0 \leq d \leq 2/3$.
\end{lemma}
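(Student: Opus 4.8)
The plan is to check log canonicity pointwise, exploiting that $\wY$ is smooth, being an iterated blowup of the smooth variety $\oY$ along smooth centers. First I would record that the boundary $B = B_a + B_b + B_{a_2} + B_{a_3} + B_{a_4}$ is a simple normal crossings divisor on $\wY$: Naruki's boundary on $\oY$ is snc, and $\wY$ is obtained by successively blowing up strict transforms of intersections of $A_1$ divisors, which are snc boundary strata, so the total boundary stays snc. Hence $(\wY, cB)$ is log canonical for every $0 \le c \le 1$, and in particular $(\wY, cB+dE)$ is log canonical at every point off $E$. It therefore suffices to treat points lying on an Eckardt divisor.

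Next I would extract the local analytic structure of $B+E$ along $E$ from the explicit descriptions in \cref{sec:div_wY}. Each Eckardt divisor is smooth (\cref{prop:eckardt_div}), the Eckardt divisors meet the boundary divisors transversally, and two Eckardt divisors sharing a common line meet transversally along an $F_4$ hyperplane (\cref{prop:eckardt_div}, case (6)(b)). At all such points $B+E$ is snc, so the pair is lc there once every coefficient is at most $1$, which holds since $c \le 1$ and $d \le 2/3 < 1$. The sole source of non-snc behaviour is \cref{prop:eckardt_div}, case (6)(a): three Eckardt divisors $D_e, D_e', D_e''$ with no common line meet along a single smooth codimension-$2$ locus $W$ (the strict transform of a quadric in each of them), all three pairwise intersections coinciding with the triple intersection. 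Near a general point of $W$ the quadric description shows the three divisors have distinct normal directions, so étale-locally $E$ is modelled on three concurrent lines $\{x=0\}, \{y=0\}, \{x=y\}$ in a transverse plane; and since the boundary traces on $D_e$ are hyperplanes and exceptional divisors rather than the quadrics $W$, no boundary divisor contains $W$, so any boundary through such a point meets $W$ transversally.

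The key computation is then the log canonical threshold at $W$. Blowing up $W$ produces an exceptional divisor $F$ of discrepancy $1$ (the center is smooth of codimension $2$), along which $E$ has multiplicity $3$ while the transverse boundary contributes nothing; hence $a(F; \wY, cB+dE) = 1 - 3d$, which is $\ge -1$ exactly when $d \le 2/3$. After this blowup the three strict transforms are separated and $F + \tilde E + \tilde B$ is snc with all coefficients at most $1$ (namely $d$ on the $\tilde E$'s, $3d-1 \le 1$ on $F$, and $c$ on $\tilde B$), so the pair is log canonical. I would carry this out $W(E_6)$-equivariantly, blowing up the whole orbit of triple loci at once.

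The hard part will be confirming that these triple loci are separated enough that no deeper stratum forces a bound stronger than $d \le 2/3$. Indeed, a single smooth curve lying on $k$ Eckardt divisors would, upon blowup (a codimension-$3$ center, discrepancy $2$), produce discrepancy $2 - kd$, which drops below $-1$ already for $k = 5$ and $d > 3/5$; so I must verify that no curve lies on more than the three Eckardt divisors of a single triple. Since two distinct triples share at most one Eckardt divisor (the third member of a pair is unique), this amounts to showing the triple loci $W$ are pairwise disjoint, so that two triples sharing one Eckardt divisor meet in the empty set rather than in a curve. Via \cref{prop:eckardt_div} this reduces to checking that the strict transforms of the $16$ quadrics inside each Eckardt divisor $\cong \oX(D_4)$ are pairwise disjoint, together with transversality of the boundary to the triple loci; granting this separation, the simultaneous blowup above is a log resolution and the threshold is pinned at exactly $2/3$.
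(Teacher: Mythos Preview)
Your approach is essentially the same as the paper's: both identify the unique non-snc locus of $B+E$ as the codimension-$2$ triple intersections of Eckardt divisors with no common line, blow these up to obtain a log resolution, and read off the discrepancy $1-3d$ along the exceptional divisor to get the bound $d\le 2/3$. The paper's proof is terser, simply asserting that this single blowup is a log resolution and writing down $K_Z = \pi^*(K_{\wY}+cB+dE) - c\wB - d\wE + (1-3d)F$; your additional care about verifying the triple loci are pairwise disjoint and transverse to the boundary is a detail the paper leaves implicit.
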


\begin{proof}
    Observe that $\wY(E_6)$ is smooth and $B+E$ has normal crossings except at the triple intersections of three Eckardt
    divisors which have no lines in common. Blowing up these triple intersections therefore gives a log resolution $\pi
    : Z \to \wY(E_6)$ of $(\wY(E_6),B+E)$. Let $F$ be the exceptional locus of $\pi$, i.e., the sum of the exceptional
    divisors over the triple intersections of Eckardt divisors. Then the class of the strict transform of $E$ is $\wE =
    \pi^*E - 3F$, and the class of the strict transform of $B$ is $\wB = \pi^*B$. Thus
    \begin{align*}
        \pi^*(K_{\wY} + cB + dE) &= \pi^*K_{\wY} + c\wB + d\wE + 3dF,
    \end{align*}
    so
    \begin{align*}
        K_Z &= \pi^*K_{\wY} + F = \pi^*(K_{\wY}+cB+dE) - c\wB - d\wE + (1-3d)F.
    \end{align*}
    It follows that $(\wY(E_6),cB+dE)$ is log canonical as long as $c \leq 1$ and $d \leq 2/3$.
\end{proof}

\begin{remark}
    Note analogously that a smooth weighted marked cubic surface $(S,dB)$ with an Eckardt point is log canonical if and
    only if $d \leq 2/3$, see \cite[Section 7.1]{schockModuliWeightedStable2023}.
\end{remark}

\begin{theorem} \label{thm:log_mmp}
    Fix $0 \leq c \leq 1$ and $0 \leq d \leq 2/3$.
    \begin{enumerate}
        \item If $4c + 25d < 1$, then $K_{\wY} + cB + dE$ is not effective.
        \item If $4c + 25d = 1$, then the log canonical model of $(\wY,cB+dE)$ is a point.
        \item If $2c+12d \leq 1$ and $4c+25d > 1$ then the log canonical model of $(\wY,cB+dE)$ is the GIT moduli space
            $\oM$ of marked cubic surfaces, obtained by contracting the 40 $A_3^2$ divisors $\cong (\bP^1)^3$ on $\oY$
            to singular points.
        \item If $c+4d \leq 1$ and $2c+12d > 1$, then the log canonical model of $(\wY,cB+dE)$ is Naruki's
            compactification $\oY$.
        \item If $c+3d \leq 1$ and $c+4d > 1$, then the log canonical model of $(\wY,cB+dE)$ is the blowup $\oY_1$ of
            $\oY$ along the intersections of 4 $A_1$ divisors.
        \item If $c+2d \leq 1$ and $c+3d > 1$, then the log canonical model of $(\wY,cB+dE)$ is the blowup $\oY_2$ of
            $\oY_1$ along the strict transforms of the intersections of 3 $A_1$ divisors.
        \item If $c+2d > 1$, then $K_{\wY}+cB+dE$ is ample, so $(\wY,cB+dE)$ is already its own log canonical model.
            Recall this is the blowup of $\oY_2$ along the strict transforms of the intersections of 2 $A_1$ divisors.
    \end{enumerate}
\end{theorem}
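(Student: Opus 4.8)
The plan is to reduce everything to the birational geometry of $\oY$ established in \cref{sec:bir_naruki} together with the intersection table of \cref{prop:int_nums}. First I would rewrite $L := K_{\wY} + cB + dE$ in the basis $B_a, B_{a_2}, B_{a_3}, B_{a_4}, B_b$ of $\Pic(\wY)^{W(E_6)}$ using \cref{prop:int_formulas_wY}; direct substitution gives
\[
    L = \tfrac14\big[(4c{+}25d{-}1)B_a + (4c{+}42d{+}2)B_{a_2} + (4c{+}51d{+}5)B_{a_3} + (4c{+}52d{+}8)B_{a_4} + (4c{+}27d{+}1)B_b\big].
\]
Letting $f : \wY \to \oY$ be the composite blowdown and using $f^* B_{A_1} = B_a + 2B_{a_2} + 3B_{a_3} + 4B_{a_4}$ and $f^* B_{A_2^3} = B_b$ (from the proof of \cref{prop:int_formulas_wY}), I would then establish the descent identity
\[
    L = f^* f_* L + (1{-}c{-}2d)B_{a_2} + 2(1{-}c{-}3d)B_{a_3} + 3(1{-}c{-}4d)B_{a_4}, \qquad f_* L = \tfrac14\big[(4c{+}25d{-}1)B_{A_1} + (4c{+}27d{+}1)B_{A_2^3}\big].
\]
The key structural point is that the coefficient of the exceptional divisor $B_{a_k}$ equals $(k{-}1)(1{-}c{-}kd)$, changing sign exactly across the walls $c+kd=1$ appearing in the statement.

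When $c+4d \le 1$ all three exceptional coefficients are nonnegative, so $L = f^* f_* L + F$ with $F$ effective and $f$-exceptional; hence $H^0(\wY, mL) = H^0(\oY, m f_* L)$ for all $m$, and the log canonical model is the projective model $\oY(f_* L)$ studied in \cref{thm:models}. Writing $f_* L = \tfrac{1+d}{4}(B_{A_1}+3B_{A_2^3}) + \tfrac{2c+12d-1}{2}(B_{A_1}+B_{A_2^3})$ in the nef generators of \cref{cor:nef_Y} (note $2c+12d\le 1$ and $4c+25d<1$ both force $c+4d\le 1$) I would read off the four cases: if $4c+25d<1$ the $B_{A_1}$-coefficient of $f_*L$ is negative so $f_*L$, hence $L$, is not effective (part 1); if $4c+25d=1$ then $f_*L=\tfrac{1+d}{2}B_{A_2^3}$ is a fixed divisor and $\Proj$ is a point (part 2); if $2c+12d\le 1<4c+25d$ then $f_*L\in[B_{A_1}+3B_{A_2^3},B_{A_2^3})$, whose model is $\oM$ by \cref{thm:models}(2) (part 3); and if $2c+12d>1$ with $c+4d\le 1$ then $f_*L$ is ample on $\oY$ (part 4), as $f_*L\cdot(2A_1A_2^3)=2c+12d-1>0$ and $f_*L\cdot(3A_1)=1+d>0$ by \cref{prop:int_nums_Y}.

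For $c+4d>1$ (parts 5 and 6) full descent fails, and I would instead run the $(K_{\wY}+cB+dE)$-MMP, which by \cref{lem:lc_bounds} operates on a log canonical pair. The $L$-negative extremal rays are the fiber classes of the blowdowns $\wY \to \oY_2 \to \oY_1 \to \oY$: from \cref{prop:int_nums} one has $L\cdot(a_2a_3a_4)=c+2d-1$, then after contracting the $a_2$-divisors the class $(L-(1{-}c{-}2d)B_{a_2})\cdot(aa_3a_4)=2(c+3d-1)$ governs the contraction of $a_3$, and so on, recovering the walls $c+2d=1$, $c+3d=1$, $c+4d=1$. Since $f_*\overline{NE}(\wY)=\overline{NE}(\oY_i)$ for a birational morphism, the $W(E_6)$-invariant cone of curves of each intermediate model is generated by the images of the curves in $\Gamma$ that it does not contract, so ampleness of the descended class on $\oY_1$ (resp.\ $\oY_2$) reduces to checking $f^*L_i\cdot\gamma>0$ for the finitely many non-contracted $\gamma\in\Gamma$; this is a routine sign computation from \cref{tab:int_nums} that holds precisely when $c+3d\le 1<c+4d$ (resp.\ $c+2d\le 1<c+3d$). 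Part 7 then follows from \cref{cor:nef_cone}, since every $L\cdot\gamma$ is positive when $c+2d>1$, making $L$ ample on $\wY$.

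The main obstacle is parts (5)--(6): one must verify that $\oY_1$ and $\oY_2$ genuinely arise as the ample models, i.e.\ that the descended class is ample there and that the positive-coefficient exceptional divisors contribute only to the fixed part of the section ring. Both points rest on the identification $\overline{NE}(\oY_i)^{W(E_6)} = f_*\overline{NE}(\wY)^{W(E_6)}$ and on the effectivity of the $f$-exceptional remainder, after which the sign-checking against $\Gamma$ is mechanical. I expect the only genuinely delicate step to be passing from the $W(E_6)$-invariant ampleness furnished by \cref{thm:mori_cone} to honest ampleness; this is handled by the standard averaging argument for the finite group $W(E_6)$, namely $L_i\cdot C = L_i\cdot\tfrac{1}{|W(E_6)|}\sum_g g_*C$ for invariant $L_i$, so that positivity on invariant curve classes already implies positivity on $\overline{NE}\setminus\{0\}$.
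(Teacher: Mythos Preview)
Your proposal is correct and follows essentially the same strategy as the paper: write $K_{\wY}+cB+dE$ in the invariant basis, establish the descent identity $L=\pi^*\pi_*L+\text{(effective $\pi$-exceptional)}$ for each candidate model, and verify ampleness of $\pi_*L$ by intersecting $\pi^*\pi_*L$ against the generators $\Gamma$ of the invariant cone of curves. The paper treats all five targets $\oM,\oY,\oY_1,\oY_2,\wY$ uniformly via Tables~\ref{tab:pushpull}--\ref{tab:K_ints}, whereas you shortcut parts (1)--(4) by pushing all the way to $\oY$ and invoking \cref{thm:models} directly; for parts (5)--(6) your ``run the MMP'' framing unwinds to exactly the same intersection-number checks against non-contracted elements of $\Gamma$ that the paper records in \cref{tab:K_ints}, and your averaging argument is the correct justification that positivity on $W(E_6)$-invariant curve classes suffices for honest ampleness.
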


\begin{proof}
    By \cref{prop:int_formulas_wY}, $K_{\wY} + cB + dE$ is given by
    \begin{align*}
        \frac{(-1+4c+25d)B_{a} + (2+4c+42d)B_{a_2} + (5+4c+51d)B_{a_3} + (8+4c+52d)B_{a_4} + (1+4c+27d)B_b}{4}.
    \end{align*}
    By \cref{thm:eff_cone} this is not effective for $4c+25d < 1$.

    If $4c+25d=1$, then the pushforward of $K_{\wY}+cB+dE$ to $\oM$ is $0$, so if we know that $\oM$ is the log canonical
    model for $2c+12d \leq 1$ and $4c+25d > 1$, then it follows that the log canonical model for $4c+25d=1$ is a point.
    Thus from now on we assume $4c+25d > 1$. Let $\Delta_{c,d} = K_{\wY}+cB+dE$, and let $\pi : \wY \to Z$ be the morphism to
    the proposed log canonical model. Then to show $Z$ is indeed the log canonical model, it suffices to show that
    $\Delta_{c,d} - \pi^*\pi_*(\Delta_{c,d})$ is effective, and $\pi_*(\Delta_{c,d})$ is ample, see
    \cite{simpsonLogCanonicalModels2007, moonLogCanonicalModels2013} for more details and similar arguments.

    For $Z \neq \oM$, the map $\pi : \wY \to Z$ is a sequence of smooth blowups, from which one easily computes
    $\pi^*\pi_*(\Delta_{c,d})$. For $Z = \oM$, the map $\pi : \wY \to Z$ is a composition of the blowup $g : \oY \to
    \oM$ of the 40 singular points, followed by the sequence of smooth blowups $\wY \to \oY$. If $\oB_{A_1}$ denotes the
    image of $B_{A_1} \subset \oY$ in $\oM$, then $B_{A_1} = g^*\oB_{A_1} + 3B_{A_2^3}$ by \cite[Corollary
    5.6(1)]{casalaina-martinNonisomorphicSmoothCompactifications2022}, from which one also computes
    $\pi^*\pi_*(\Delta_{c,d})$ in this case. The resulting formulas are given in \cref{tab:pushpull}.

    \begin{table}[htpb]
        \centering
        \caption{Formulas for $\pi^*\pi_*(\Delta_{c.d})$.}
        \label{tab:pushpull}
        \begin{tabular}{| c | c |}
            \hline
            $Z$ & $\pi^*\pi_*(\Delta_{c,d})$ \\
            \hline\hline
            $\oM$ & $\dfrac{(-1+4c+25d)(B_a+2B_{a_2}+3B_{a_3}+4B_{a_4}+3B_b)}{4}$ \\
            $\oY$ & $\dfrac{(-1+4c+25d)(B_a+2B_{a_2}+3B_{a_3}+4B_{a_4})+(1+4c+27d)B_b}{4}$ \\
            $\oY_1$ & $\dfrac{(-1+4c+25d)(B_a+2B_{a_2}+3B_{a_3})+(8+4c+52d)B_{a_4}+(1+4c+27d)B_b}{4}$ \\
            $\oY_2$ & $\dfrac{(-1+4c+25d)(B_a+2B_{a_2})+(5+4c+51d)B_{a_3}+(8+4c+52d)B_{a_4}+(1+4c+27d)B_b}{4}$ \\
            $\wY$ & $\Delta_{c,d}$ \\
            \hline
        \end{tabular}
    \end{table}

    From \cref{tab:pushpull}, we compute $\Delta_{c,d} - \pi^*\pi_*(\Delta_{c,d})$ to be as in \cref{tab:K_effs}, and we
    see that it is effective in the desired region.
    \begin{table}[htpb]
        \centering
        \caption{Effectivity of $\Delta_{c,d} - \pi^*\pi_*(\Delta_{c,d})$.}
        \label{tab:K_effs}
        \begin{tabular}{| c | c | c |}
            \hline
            $Z$ & $\Delta_{c,d} - \pi^*\pi_*(\Delta_{c,d})$ & Effective region \\
            \hline
            \hline
            $\oM$ & $(1-c-2d)B_{a_2} + (2-2c-6d)B_{a_3} + (3-3c-12d)B_{a_4} + (1-2c-12d)B_b$ & $2c+12d \leq 1$ \\
            $\oY$ & $(1-c-2d)B_{a_2}+(2-2c-6d)B_{a_3}+(3-3c-12d)B_{a_4}$ & $c+4d \leq 1$\\
            $\oY_1$ & $(1-c-2d)B_{a_2} + (2-2c-6d)B_{a_3}$ & $c+3d \leq 1$\\
            $\oY_2$ & $(1-c-2d)B_{a_2}$ & $c+2d \leq 1$\\
            $\wY$ & $0$ & Always \\
            \hline
        \end{tabular}
    \end{table}

    It remains to show that $\pi_*(\Delta_{c,d})$ is ample in the desired region. Using \cref{prop:int_nums} and
    \cref{tab:pushpull}, we compute the intersection numbers of $\pi^*\pi_*\Delta_{c,d}$ and the extremal rays of the
    $W(E_6)$-invariant cone of curves of $\wY$ to be as in \cref{tab:K_ints}. By \cref{cor:nef_cone} and the table, we
    see that for $c,d$ in the desired region, $\pi^*\pi_*\Delta_{c,d}$ is nef and zero precisely on the curves
    contracted by $\pi$. The result follows.
    \begin{table}[htpb]
        \centering
        \caption{Intersection numbers of $\pi^*\pi_*\Delta_{c,d}$ with the extremal rays of the $W(E_6)$-invariant cone of curves
        of $\wY(E_6)$.}
        \label{tab:K_ints}
        \begin{tabular}{|c|c|c|c|c|c|c|c|}
            \hline
            & $aa_2a_3$ & $aa_2a_4$ & $aa_3a_4$, $aa_3b$ & $a_2a_3a_4$, $a_2a_3b$ & $aa_2b$ & $aa_2e$ \\
            \hline\hline
            $\oM$ & $-1+4c+25d$ & $0$ & $0$ & $0$ & $0$ & $-1+4c+25d$ \\
            $\oY$ & $d$ & $0$ & $0$ & $0$ & $-1+2c+12d$ & $-1+4c+25d$ \\
            $\oY_1$ & $4-3c-11d$ & $-3+3c+12d$ & $0$ & $0$ & $-1+2c+12d$ & $5-2c+d$ \\
            $\oY_2$ & $c+d$ & $1-c$ & $-2+2c+6d$ & $0$ & $5-4c-6d$ & $9-6c-11d$ \\
            $\wY$ & $c+d$ & $2d$ & $2d$ & $-1+c+2d$ & $2-c$ & $4-c-d$ \\
            \hline
        \end{tabular}
    \end{table}
\end{proof}

\begin{remark}
    In particular, when $d=0$, one recovers the log minimal model program for $(\oY,cB_{\oY})$: $K_{\oY} + cB_{\oY}$ is
    ample for $c > 1/2$, one performs the contraction $\oY \to \oM$ when $c=1/2$, and when $c=1/4$ the log canonical
    model is a point (cf. \cref{thm:models}).
\end{remark}

\bibliographystyle{amsalpha} \bibliography{cubics}
\end{document}